\documentclass[11pt,reqno]{amsart}

\usepackage[utf8]{inputenc}
\usepackage[margin=1.25in]{geometry}
\usepackage{hyperref}
\usepackage{appendix}
\usepackage{amsfonts}
\usepackage{amsthm}
\usepackage{amssymb}
\usepackage{stmaryrd} 
\usepackage{enumitem}
\usepackage{amsmath}
\usepackage{amsthm}
\usepackage[dvipsnames]{xcolor}
\usepackage{mathrsfs}
\usepackage{lipsum} % for filler text

\theoremstyle{plain}
\newtheorem{theorem}{Theorem}[section]
\newtheorem{corollary}[theorem]{Corollary}
\newtheorem{lemma}[theorem]{Lemma}
\newtheorem{Proposition}[theorem]{Proposition}

\newtheorem{Definition}[theorem]{Definition}

\newtheorem{fact}[theorem]{Fact}

\theoremstyle{remark}
\newtheorem{example}[theorem]{Example}

\newtheorem{Question}[theorem]{Question}

\newtheorem{remark}[theorem]{Remark}

\usepackage{biblatex} %Imports biblatex package
\numberwithin{equation}{section}
\title[Brown measure for polynomials in Ginibre matrices]{Brown measure convergence for the spectrum of polynomials in Ginibre matrices}

\author{Yi HAN}
\address{Institute for Advanced Study, 1 Einstein Drive, Princeton, NJ
}
\email{hanyi@ias.edu}

\begin{document}

\begin{abstract}

Fix a multivariate polynomial $\mathfrak{p}$ in $n$ non-commuting variables of arbitrary degree, and consider $n$ independent $N\times N$ complex Ginibre matrices $X_1^N,\cdots,X_n^N$.  We prove that the empirical spectral distribution of $P^N=\mathfrak{p}(X_1^N,\cdots,X_n^N)$ converges as $N$ tends to infinity to the so-called Brown measure of $\mathfrak{p}$ evaluated at free circular variables. For polynomials of degree at most 2, the convergence was proven by Cook, Guionnet, and Husson \cite{cook2022spectrum}, and we prove that the convergence in fact holds for polynomials $\mathfrak{p}$ of any degree. The main step in the proof is a least singular value lower bound for $P^N-z$ for almost all complex shifts $z$, and we prove this via a least singular value lower bound for a wide class of tensorized Ginibre matrices of finite type with a deterministic shift, which is of independent interest. We further show that the Brown measure convergence holds beyond Gaussians: the same convergence holds when the entry law has mean 0, variance 1, bounded density on $\mathbb{C}$ and finite moments of all orders.

\end{abstract}

\maketitle

\section{Introduction}

For an $N\times N$ matrix $A$ with complex entries and complex eigenvalues $\lambda_1(A),\cdots,\lambda_N(A)$, its empirical spectral distribution (ESD) is defined as the following probability measure 
\begin{equation}
    \mu_A:=\frac{1}{N}\sum_{j=1}^N\delta_{\lambda_j(A)}.
\end{equation}
A central problem in random matrix theory is to study, for a sequence of $N\times N$ matrices $(A^N)_{N\geq 1}$, the convergence of its ESD $\mu_{A^N}$ to some deterministic probability measure $\mu$. When $A_N$ is a real symmetric or Hermitian matrix, its eigenvalues are real and the convergence of $\mu_{A^N}$ can be addressed via the method of moments. For matrices with i.i.d. entries above the diagonal, \cite{wigner1958distribution} established the celebrated Wigner semicircle law; and for the Gram matrix of i.i.d. rectangular matrices, \cite{marvcenko1967distribution} established the celebrated Marchenko–Pastur law.

The convergence of ESDs for non-Hermitian matrix ensembles is significantly harder to establish. For the i.i.d. ensemble $X^N$ where the entries are i.i.d. without imposing symmetry, the convergence of $\mu_{\frac{1}{\sqrt{N}}X^N}$ to the circular law $\mu_{\text{circ}}$ was established in \cite{tao2008random}, \cite{ WOS:000281425000010} and \cite{MR2663633}. A key component among these papers is that we need to derive quantitative least singular value estimates for $\frac{1}{\sqrt{N}}X^N-zI$ over $z\in\mathbb{C}$ in order to rigorously apply Girko's Hermitization trick \cite{girko1985circular} (see Section \ref{section5.1}), which is a new requirement not demanded by Hermitian random matrices. We refer to the survey \cite{MR2908617} for more discussion.

Since then, the circular law has also been proven for sparse random matrices \cite{MR3945840} and sparse random directed graphs \cite{MR3945840}, and the limiting ESD for sparse random graphs with constant average degree was established in \cite{sah2023limiting}. The circular law was also recently proven for many inhomogeneous non-Hermitian random matrices, see \cite{cook2018lower}, \cite{jain2021circular}, \cite{tikhomirov2023pseudospectrum}, \cite{han2024circular}, \cite{han2025circular}, \cite{han2025circular2}. However, in these works, the assumption that the entries are (mostly) independent plays a crucial role for establishing least singular value lower bounds for $\frac{1}{\sqrt{N}}A_N-z$, and thus for establishing the circular law limit.

In this paper, we are specifically interested in random matrices that arise from evaluating a non-commutative polynomial at independent Ginibre matrices. The entries of these matrices are highly dependent.  Then for any polynomial $\mathfrak{p}$ with complex coefficients and $n$ non-commuting variables, and $n$ i.i.d. random matrices $X_1^N,\cdots,X_n^N$, we study the spectrum of $P^N=\mathfrak{p}(X_1^N,\cdots,X_n^N)$. Prior to this work, the convergence of the ESD of $P^N$ was only established in several special classes of polynomial $\mathfrak{p}$.
The very special case of product of i.i.d. matrices was studied in much depth in \cite{tikhomirov2011asymptotics}, \cite{kosters2015limiting}, \cite{o2011products}, and see also \cite{o2015products}. For more general polynomials $\mathfrak{p}$, the convergence of ESD of $P^N$ was only established in \cite{cook2022spectrum} when the polynomial $\mathfrak{p}$ has maximal degree 2. Before the present work, no general theory was known when $\deg\mathfrak{p}\geq 3$. A series of works \cite{sniady2002random}, \cite{guionnet2014convergence}, \cite{wood2016universality} have shown that for a sequence of non-normal matrices converging in $*$-moments (this covers the case of $P^N$ here, see the next paragraph), it is often sufficient to add a vanishing noisy perturbation to regularize the ESD and guarantee convergence of ESD to the perceived limiting measure. However, as a perturbation is needed, this does not constitute a general theory of ESDs for $P^N$ themselves.

Polynomials in random matrices are also central topics in the theory of free probability and operator algebra, stemming from Voiculescu's work \cite{voiculescu1991limit}. Voiculescu showed that free probability theory describes the limiting measures for ESDs of polynomials in Wigner matrices. Indeed, let $\mathfrak{p}$ be a non-commuting and self-adjoint polynomial, and let $W_1^N,\cdots,W_n^N$ be independent $N\times N$ GUE matrices, then the ESD of $\mathfrak{p}(W_1^N,\cdots,W_n^N)$ converges to the spectral distribution of $\mathfrak{p}(s_1,\cdots,s_n)$ where $s_1,\cdots,s_n$ are freely independent semi-circular elements in a $C^*$-algebra. A later notion of strong convergence introduced by Haagerup and Thorbjørnsen \cite{haagerup2005new} further characterized the convergence of the operator norm of the matrix polynomial to the operator norm of the free probability limit. Some more recent progress on this topic can be found in \cite{bandeira2023matrix}, \cite{brailovskaya2024universality}, and see also the survey \cite{van2025strong}. However, all these works concern self-adjoint polynomials in Wigner matrices, and the non-normal case is completely untouched. For instance, \cite{van2025strong}, Section 6.9 asked whether an analogous theory of asymptotic freeness (namely, convergence of the ESD to the Brown measure limit, see Section \ref{section5.1}) can be derived for polynomials $\mathfrak{p}$ evaluated in non-Hermitian random matrices. The main contribution of this paper is to make a significant step in this problem, proving convergence of ESDs for $\mathfrak{p}$ evaluated in Ginibre matrices with arbitrary $\deg\mathfrak{p}$, and the convergence goes beyond Gaussians covering a class of generic distributions with bounded density. The main results of this paper are stated as follows.

We write $\mathbb{C}\langle x_1,\cdots,x_n\rangle$ for the class of polynomials with complex coefficients in $n$ non-commuting indeterminate variables $x_1,\cdots,x_n$.
\begin{Definition}
   A random matrix $X=X^N$ is an $N\times N$ complex Ginibre matrix if the families $(\sqrt{N}\Re(X_{ij}))_{i,j\in[N]}$ and $(\sqrt{N}\Im(X_{ij}))_{i,j\in[N]}$ are independent and i.i.d. families of random variables with distribution $\mathcal{N}(0,\frac{1}{2})$. Here $[N]:=\{1,2,\cdots,N\}$.
\end{Definition}

\begin{theorem}\label{mainconvergencetheorem}
  Let $n\in\mathbb{N}$ and let $\mathfrak{p}\in\mathbb{C}\langle x_1,\cdots,x_n\rangle$ be a non-commutative polynomial of arbitrary degree. Then for each $N\in\mathbb{N}$ let $X_1^N,\cdots,X_n^N$ be independent $N\times N$ complex Ginibre matrices and denote by $P^N=\mathfrak{p}(X_1^N,\cdots,X_n^N)$. Then we have the convergence 
  $$
\mu_{P^N}\to \nu_{\mathfrak{p}(c_1,\cdots,c_n)}
$$ weakly in probability. Here $c_1,\cdots,c_n$ are *-free circular elements in a $W^*$-probability space and $\nu_{\mathfrak{p}(c_1,\cdots,c_n)}$ is the Brown measure of $\mathfrak{p}(c_1,\cdots,c_n)$ (definition spelled out in Section \ref{section5.1}).
\end{theorem}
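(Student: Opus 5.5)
We follow Girko's Hermitization scheme. For $z\in\mathbb{C}$ set $\nu^N_z$ to be the ESD of the Hermitian matrix $|P^N-z|=((P^N-z)^*(P^N-z))^{1/2}$, and recall that $\log|\det(P^N-z)|/N=\int\log x\,d\nu_z^N(x)$. By the standard Hermitization lemma (cf.\ Section \ref{section5.1}), $\mu_{P^N}\to\nu_{\mathfrak{p}(c_1,\dots,c_n)}$ weakly in probability follows once we know, for Lebesgue-a.e.\ $z$, that $\nu_z^N\to\nu_z$ weakly in probability, where $\nu_z$ is the spectral distribution of $|\mathfrak{p}(c)-z|$ in the $W^*$-probability space, and that $\log$ is uniformly integrable for $(\nu_z^N)_N$ in probability. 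The Brown measure is then identified as $\frac{1}{2\pi}\Delta_z\int\log x\,d\nu_z(x)$.

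\textbf{Step 1 (convergence of singular value distributions).} Independent Ginibre matrices converge in $*$-distribution to $*$-free circular elements (Voiculescu), almost surely, and the operator norms $\|X_i^N\|$ are bounded a.s.\ by $2+o(1)$. Hence all $*$-moments of $P^N-z$ converge to those of $\mathfrak{p}(c)-z$, so by the moment method $\nu_z^N\to\nu_z$ weakly a.s.\ for every $z$, with uniformly bounded support. This also controls the upper tail of $\log$, so only the small singular values remain.

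\textbf{Step 2 (small singular values).} We must show that $\int_0^{\varepsilon}|\log x|\,d\nu^N_z(x)\to0$ as $\varepsilon\to0$, uniformly in $N$, in probability. We split this into (a) a polynomial lower bound $s_{\min}(P^N-z)\ge N^{-C}$ with probability $1-o(1)$, and (b) control of the intermediate singular values $s_{N-k}$ for $N^{1-\delta}\le k\le \varepsilon N$, which requires showing that $\nu_z^N([0,\eta])$ is small uniformly for $\eta\ge N^{-\delta'}$. For (b) we plan a linearization: represent $P^N-z$ as a Schur complement of a block matrix $L^N(z)=A_0\otimes I_N+\sum_i A_i\otimes X_i^N+\sum_i B_i\otimes (X_i^N)^*$ (Anderson's linearization; the adjoints are unnecessary here). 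Then $\nu_z^N([0,\eta])$ is controlled by the small singular values of $L^N$. These can be handled through the Gaussian structure and the local regularity of the limit, for instance via the Stieltjes transform of the Hermitized linearization at scale $\Im w\ge N^{-\delta}$ compared with its operator-valued free limit, using that $\nu_z$ has no atom at $0$ for a.e.\ $z$. Alternatively, the negative-moment bound on $\nu_z$ near $0$ can be transferred via concentration.

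\textbf{Step 3 (main obstacle: least singular value).} The key step is (a). Via the linearization, $P^N-z$ is invertible with controlled inverse whenever the tensorized matrix $L^N(z)=A_0(z)\otimes I_N+\sum_i A_i\otimes X_i^N$ is, since $\|(P^N-z)^{-1}\|\le\|L^N(z)^{-1}\|$ because the inverse of the Schur complement is a block of $L^{-1}$. So we aim to prove $\mathbb{P}(s_{\min}(L^N(z))\le N^{-C})=o(1)$ for a.e.\ $z$. Here the $A_i$ are fixed $d\times d$ matrices, which may be highly degenerate (nilpotent-looking), so this is a tensorized Ginibre matrix with deterministic shift. Our first attempt will exploit Gaussianity. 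Condition on all but one $X_i$, then use rotation invariance and anti-concentration of $\det$ as a polynomial in Gaussian entries (a Carbery–Wright type bound), combined with the fact that $\det L^N(z)$ is not identically zero as a function of $z$ and the Gaussians. This would give polynomially small probability of tiny singular values outside an exceptional $z$-set of measure zero. The difficulty is the degeneracy of the $A_i$; to handle it we would reduce $L^N$ by block row/column operations to a triangular-type normal form whose diagonal blocks are shifted Ginibre matrices or products of them. For each of these, classical least singular value bounds (Sankar–Spielman–Teng for Gaussian plus deterministic shift) apply, and they can be combined through the block-triangular structure with polynomial loss. Finally, Steps 1–3 together with the Hermitization lemma give the theorem.
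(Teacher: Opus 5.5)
Steps 1 and 2 follow the structure of the paper's Section \ref{section5555}, with one caveat. In 2(b), `no atom at $0$' is not enough: singular values in $[N^{-C},N^{-c}]$ carry a weight of order $\log N$. You need a quantitative bound such as $\nu_z([0,\eta])=o(1/|\log\eta|)$. This is available from $\int|\log x|\,d\nu_z<\infty$, which holds for a.e.\ $z$, or from the power law the paper imports from Cook--Guionnet--Husson.

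The genuine gap is Step 3, which is the real content of the theorem (Theorem \ref{theoremonline89}). Neither of your two routes produces $s_{\min}(L^N(z))\ge N^{-C}$.

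\emph{Carbery--Wright on the full determinant.} $\det L^N(z)$ has degree of order $mN$ in the Gaussian entries, and this remains true after conditioning on all but one $X_i$. So Carbery--Wright only gives $\mathbb{P}(|\det|\le\epsilon\,(\mathbb{E}|\det|^2)^{1/2})\lesssim N\epsilon^{1/(mN)}$. Converting a determinant bound into a singular value bound then costs a factor $\|L^N\|^{mN-1}$. The best outcome is $s_{\min}\ge e^{-CN\log N}$, for which even $\frac1N\log s_{\min}$ does not vanish. The fact that $\det$ is not identically zero is purely qualitative.

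\emph{The triangular normal form.} A normal form with shifted-Ginibre or product diagonal blocks does not exist in general. After normalizing the constant coefficient to the identity (possible for $z\neq\mathfrak{p}(0)$; the paper arranges $K_z=(\mathfrak{p}(0)-z)I_m$), deterministic block operations can at best reach the composition-series form of Section \ref{section3.11}. Its diagonal blocks $I\otimes I_N+\sum_\ell R^{(b)}_\ell\otimes X_\ell$ act irreducibly and cannot be split further. They are also not `deterministic plus full Ginibre', because $\operatorname{span}\{R^{(b)}_\ell\}$ is typically a proper subspace of $\operatorname{End}(S_{I(b)})$, so Sankar--Spielman--Teng does not apply.

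Eliminating with random coefficients just leads back to $P^N-z$. For example, take $X_1X_2X_3-X_3X_2X_1-z$ and condition on $X_2,X_3$. The random part is the image of $X_1$ under $Y\mapsto YX_2X_3-X_3X_2Y$. This map kills the $N$-dimensional space $\{X_3q(X_2X_3)\}$, so its law lives on a subspace of codimension $N$. The same happens for the other two choices of which pair to condition on.

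What is missing is an argument for the irreducible blocks themselves. The paper does the following.
\begin{enumerate}
\item Lemma \ref{blocksecondidentity} (negative second moment identity) reduces the problem to the fixed-size exposed maps $D_a(\xi)\colon Z\to\ker\mathcal{T}_{-a}^*$.
\item The key claim is that every unit $A=(A_i)\in\operatorname{End}(Z)^N$ in this kernel satisfies $\frac1N\sum_{i,\ell}|\langle A_i,R_\ell\rangle|^2\ge N^{-B_Z}$.
\item If the claim failed, the projections $B_i$ of the $A_i$ onto $\Sigma_R^\perp$ would nearly solve $B_k+\sum_{i,\ell}\overline{X_\ell(i,k)}R_\ell^*B_i=0$ for all $k\neq a$. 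The paper rules this out by a net argument.
\item The net loses to Gaussian anti-concentration because of a dimension surplus $\dim Y(E)\ge\dim(E\cap Y(E))+1$, where $E=\operatorname{span}\{B_i\}$ and $Y(E)=\operatorname{span}_\ell R_\ell^*E$.
\item The surplus comes from Burnside's theorem (Proposition \ref{propositionnoinvariance}). A subspace of $\Sigma_R^\perp$ invariant under all $R_\ell^*$ would be a left ideal $\operatorname{End}(Z)P$ with $R_\ell P=0$ for all $\ell$, contradicting irreducibility.
\item Only then is Carbery--Wright used, on $\det D_a(\xi)$, a polynomial of bounded degree $\dim Z$.
\end{enumerate}
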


The main step in proving Theorem \ref{mainconvergencetheorem} is to lower bound the least singular value of shifted versions of $P^N$, which is stated in the following second main result of the paper:

\begin{theorem}\label{theoremonline89}
    Let $n\in\mathbb{N}$ and let $\mathfrak{p}\in\mathbb{C}\langle x_1,\cdots,x_n\rangle$ be a non-commutative polynomial of any degree. Then for any $z\neq\mathfrak{p}(0)$ there are constants $c_0(\mathfrak{p}),c(\mathfrak{p}),c_1(\mathfrak{p})$ depending only on $\mathfrak{p}$, and a constant $\widehat{C}(\mathfrak{p},z)>0$ depending on both $\mathfrak{p}$ and $z$, so that we have the following estimate. Let $X_1^N,\cdots,X_n^N$ be independent $N\times N$ complex Ginibre matrices and denote by $P^N=\mathfrak{p}(X_1^N,\cdots,X_n^N)$. Then for this $z\neq \mathfrak{p}(0)$, we can find some fixed $N_0=N_0(\mathfrak{p},z)$ such that whenever $N\geq N_0$, we have for any $\epsilon>0,$
    $$
\mathbb{P}\{\sigma_{min}(P^N-z)\leq\epsilon
\}\leq \widehat{C}(N^{c_0}\epsilon^c+\exp(-c_1N)).
    $$
\end{theorem}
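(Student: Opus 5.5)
The plan is to linearize and transfer the problem to a tensorized Ginibre model with a deterministic shift, as the abstract indicates. First, apply the standard linearization trick (Anderson's / Haagerup–Thorbjørnsen self-adjoint-free version) to $\mathfrak{p}$. This produces a matrix pencil
$$L = A_0 + \sum_{i=1}^n A_i\otimes X_i^N$$
of size $mN\times mN$, with deterministic $m\times m$ coefficient matrices $A_0,\dots,A_n$ depending only on $\mathfrak{p}$ and $z$. The pencil has the Schur complement property: $L$ can be written in block form $\begin{pmatrix} \mathfrak{p}(X)-z & u\\ v & Q\end{pmatrix}$, where $Q$ is block upper triangular with identity-type diagonal and nilpotent off-diagonal Ginibre terms. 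Consequently $Q^{-1}$ is a polynomial in the $X_i$.

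This gives a two-sided comparison
$$\sigma_{\min}(P^N-z)\ \ge\ \sigma_{\min}(L)\,\big/\,\big(1+\|u\|\|Q^{-1}\|\big)\big(1+\|Q^{-1}\|\|v\|\big),$$
and every factor on the right except $\sigma_{\min}(L)$ is a polynomial in $\|X_i^N\|$. Since $\|X_i^N\|\le 3$ with probability $1-e^{-cN}$, it suffices to prove $\mathbb{P}(\sigma_{\min}(L)\le\epsilon)\le C(N^{c_0}\epsilon^{c}+e^{-c_1N})$.

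Second, prove the least singular value bound for the tensorized model $L=A_0\otimes I_N+\sum_i A_i\otimes X_i^N$. Here the hypothesis $z\ne\mathfrak{p}(0)$ enters: it guarantees that the deterministic part $A_0$, or a suitable reduction of it, is invertible, and more importantly that the "finite type" structure is nondegenerate.

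I would prove this bound by induction on $m$, exploiting Gaussian rotation invariance. Write $L$ in block form along the first coordinate of $\mathbb{C}^m$. The blocks are either deterministic multiples of $I_N$ or affine combinations of Ginibre matrices. Whenever some block contains a nondegenerate Gaussian combination $G=\sum_i a_iX_i$, its conditional law given everything else is Gaussian with a deterministic shift. For a shifted Gaussian matrix $G+M$ one has the Sankar–Spielman–Teng / Edelman type bound
$$\mathbb{P}\big(\sigma_{\min}(G+M)\le\epsilon\big)\le C N\epsilon,$$
uniformly in $M$ (after the $\sqrt N$ normalization). This bound is applied to the Schur complement of the remaining blocks. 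That Schur complement involves inverses of smaller tensorized pencils, which are controlled polynomially by the induction hypothesis, and a union bound over the $O(1)$ steps yields the polynomial loss $N^{c_0}\epsilon^{c}$.

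The main obstacle is the case where the pivot block is deterministic, or where the Gaussian randomness appears only through products, as arises for degree $\ge3$. In that case one cannot simply condition. My first attempt would be to reduce $A_0,\dots,A_n$ by simultaneous left/right invertible transformations $S(\cdot)T$, which preserve $\sigma_{\min}$ up to constants, to a normal form of Kronecker type: deterministic identity blocks eliminate variables, and what remains is a square pencil in which each step of an elimination has a genuinely random pivot.

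One must show that this elimination never gets stuck. A stuck elimination would correspond to $L$ being singular for all $X$, i.e. the pencil being non-full. By the minimality of the linearization together with $z\ne\mathfrak{p}(0)$, this would force $\mathfrak{p}(X)-z$ to be identically singular, which is impossible since its value at $X=0$ is the nonzero scalar $\mathfrak{p}(0)-z$. Making this algebraic-to-quantitative step rigorous, with constants depending only on $\mathfrak{p}$ and $z$, is where I expect most of the difficulty. The $N\ge N_0$ and $e^{-c_1N}$ terms absorb the operator norm events and the exceptional conditioning events.
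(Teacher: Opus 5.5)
Your first step is sound and matches the paper. You linearize, use that $(P^N-z)^{-1}$ is a corner of $L^{-1}$, and observe that $z\neq\mathfrak{p}(0)$ makes the constant part invertible. With the paper's choice $K_z=(\mathfrak{p}(0)-z)I_m$ one even gets $\sigma_{\min}(P^N-z)\ge\sigma_{\min}(\mathcal{L}_z)$ directly.

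\textbf{The gap is in your second step, which carries the whole content of the theorem.} In $L=A_0\otimes I_N+\sum_i A_i\otimes X_i^N$, every $N\times N$ block $L_{jk}=(A_0)_{jk}I_N+\sum_i(A_i)_{jk}X_i^N$ is built from the same $n$ Ginibre matrices.

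\begin{itemize}
\item If you pivot on a block containing $G=\sum_i(A_i)_{11}X_i$, the Schur complement $L_{11}-L_{12}L_{22}^{-1}L_{21}$ is not of the form $G+M$ with $M$ independent of $G$. The blocks $L_{12},L_{21},L_{22}$ generically depend on $G$ too. Rotating $(X_i)$ so that $G$ is one coordinate does not help, because the coefficient vectors of the other blocks are not orthogonal to $((A_i)_{11})_i$.
\item So the Sankar--Spielman--Teng bound for $G+M$ cannot be invoked. Your induction on $m$ has no fresh independent randomness at each step: there are $m^2$ blocks but only $n$ independent matrices, and the coefficients do not span $M_m(\mathbb{C})$.
\item In the paper's linearization, the diagonal blocks of the auxiliary chains are deterministic multiples of $I_N$, and eliminating them just gives back $P^N-z$.
\end{itemize}

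Your fallback does not close this either. There is no Kronecker-type normal form for simultaneous equivalence of $n+1\ge 3$ matrices; that classification problem is wild. Also, ``non-fullness'' only captures qualitative invertibility. That is trivial here: $L$ at $X=0$ is invertible, so $\det L\not\equiv0$. It gives no polynomial bound, since $\det L$ has degree of order $mN$ and anticoncentration applied to it is useless. You flag this algebraic-to-quantitative step as the main difficulty, but it is left open, so the estimate is not proved.

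\textbf{The missing idea is to exploit independence along the $N$-index rather than the $m$-index.}
\begin{itemize}
\item The $a$-th block column of $I_m\otimes I_N+\sum_\ell R_\ell\otimes X_\ell$ depends only on the $a$-th columns of $X_1,\dots,X_n$, so it is independent of $\mathcal{T}_{-a}$.
\item The block negative second moment identity (Lemma~\ref{blocksecondidentity}) reduces the problem to the bounded-size matrix $D_a(\xi)=P_{W_a}C_a(\xi)$.
\item Carbery--Wright applies to the bounded-degree polynomial $\det D_a(\xi)$, once $\mathbb{E}|\det D_a|^2\ge N^{-C}$.
\item That lower bound needs every unit normal $A\in\ker\mathcal{T}_{-a}^*$ to have non-negligible components along $\operatorname{span}(R_\ell)$.
\item The paper gets this by first reducing to irreducible factors via a composition series, which makes the matrix block triangular.
\item Burnside's theorem then shows that no nonzero $E\subset\operatorname{span}(R_\ell)^\perp$ is invariant under all $R_\ell^*$ (Proposition~\ref{propositionnoinvariance}). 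This yields the dimension surplus $\dim Y(E)\ge\dim(E\cap Y(E))+1$.
\item Made robust, this surplus lets Gaussian anticoncentration beat the net in Theorem~\ref{theorem3.4}.
\end{itemize}
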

Here $\sigma_{min}(\cdot)$ is the least singular value of a square matrix and $\mathfrak{p}(0)$ is the constant term in $\mathfrak{p}$, which is obtained from setting all Ginibre matrices $X_i^N$ equal to zero. This least singular value estimate excludes the single point $z=\mathfrak{p}(0)$ and is not uniform in $z$, but is still sufficient for proving the Brown measure convergence in Theorem \ref{mainconvergencetheorem}. We remark that when $\mathfrak{p}$ has degree two, \cite{cook2022spectrum}, Theorem 1.3 proved a least singular value estimate uniformly for all $z\in\mathbb{C}$, but higher degree polynomials are not covered. It is possible to track the exact values of $c_0$ and $c$, and they should depend only on the dimension of the linearization matrix associated with $\mathfrak{p}$, see Section \ref{section2}, but the exact dimension dependence is very complicated so we do not determine the exact value here.

Interestingly, the main results of this paper do not rely on the strict Gaussian structure of Ginibre matrices, and we generalize them to a wide class of non-Gaussian distributions:

\begin{theorem}\label{nongaussiantheorem} Let $n\in\mathbb{N}$ and $\mathfrak{p}\in\mathbb{C}\langle x_1,\cdots,x_n\rangle$.
    Let $\xi$ be a complex-valued random variable with 
    $$
\mathbb{E}[\xi]=0,\quad\mathbb{E}[|\xi|^2]=1,
    $$and assume that $\xi$ has a density $f$ on $\mathbb{C}$ satisfying, for some fixed $K>0$,
    \begin{equation}\label{densityassumption}
\|f\|_{L^\infty(\mathbb{C})}\leq K<\infty.
    \end{equation}Further assume that $\xi$ has finite moments of all orders: for any $Q\in\mathbb{N}_+$ there exists $M_Q<\infty$ such that
    \begin{equation}\label{finitenessofmoments}
\mathbb{E}[|\xi|^Q]\leq M_Q<\infty.
    \end{equation}
    Consider a random matrix $X^N=(\frac{1}{\sqrt{N}}x_{ij})_{1\leq i,j\leq N}$ where the $x_{ij}$ are i.i.d. random variables with law $\xi$. Let $X_1^N,\cdots,X_n^N$ be i.i.d. copies of $X^N$. We similarly denote by $P^N=\mathfrak{p}(X_1^N,\cdots,X_n^N)$. Then the same Brown measure convergence in Theorem \ref{mainconvergencetheorem} also holds for this matrix $P^N$ with non-Gaussian law. 
\end{theorem}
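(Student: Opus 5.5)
We follow Girko's Hermitization scheme, as in the Gaussian case of Theorem \ref{mainconvergencetheorem}. By the standard replacement lemma (e.g. \cite{MR2908617}), $\mu_{P^N}\to\nu_{\mathfrak{p}(c_1,\cdots,c_n)}$ weakly in probability provided two things hold. First, $\frac1N\log|\det(P^N-z)|\to\int\log|\lambda-z|\,d\nu(\lambda)$ in probability for Lebesgue-a.e. $z\in\mathbb{C}$. Second, the tightness condition $\sup_N\frac1N\|P^N\|_{HS}^2<\infty$ holds with high probability. We will check the tightness condition and, for a.e. $z$, the following three ingredients.

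\textbf{(i) Convergence of singular value distributions.} For fixed $z$, the empirical singular value distribution of $P^N-z$ converges to the distribution of $|\mathfrak{p}(c_1,\dots,c_n)-z|$. We prove this by the method of moments on the Hermitian matrix $(P^N-z)^*(P^N-z)$. Each of its moments is a $*$-moment of $(X_1^N,\dots,X_n^N)$, and independent i.i.d. matrices with mean zero, variance one and all moments finite are asymptotically $*$-free circular. Concentration comes from the variance bound $O(N^{-2})$ that is standard in the genus expansion. The same moment computation gives the tightness condition. Uniform integrability at infinity follows because $\mathbb{E}\frac1N\operatorname{tr}(P^NP^{N*})^k$ is bounded.

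\textbf{(ii) Least singular value.} We need $\mathbb{P}(\sigma_{\min}(P^N-z)\le N^{-C})\to0$ for every $z\neq\mathfrak{p}(0)$, that is, an analogue of Theorem \ref{theoremonline89} for the non-Gaussian law. We pass to the linearization from Section \ref{section2}. This gives a matrix of the form $L^N=A_0\otimes I_N+\sum_i A_i\otimes X_i^N+\sum_i B_i\otimes X_i^{N*}$ of fixed size $d$, whose least singular value controls that of $P^N-z$ up to polynomial factors in $\|X_i^N\|$. Those factors are $O(1)$ with probability $1-N^{-D}$ for every $D$: this uses the finiteness of moments \eqref{finitenessofmoments} and a moment bound on the operator norm. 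What remains is a small-ball estimate for tensorized matrices with a deterministic shift. The Gaussian argument used rotation invariance only through anti-concentration of linear and quadratic forms in the entries. We replace it with the bounded density assumption \eqref{densityassumption}. For any unit vector $v$ and any fixed row of coefficient blocks, the distribution of $\sum_j x_{ij}v_j$ has density at most $CK$ on $\mathbb{C}$, by convolution of bounded densities. This gives $\mathbb{P}(|\langle X_{i\cdot},v\rangle-w|\le\epsilon)\le CK\epsilon^2$ uniformly in $w$. Conditioning on all matrices except one block row, which is the structure exploited in the Gaussian proof, then yields the same $N^{c_0}\epsilon^c$ bound. The $\exp(-c_1N)$ term becomes $N^{-D}$, which is still sufficient.

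\textbf{(iii) Intermediate singular values.} We must show that $\frac1N\sum_{i\ge N-N^{1-\delta}}\log\sigma_i$ is negligible. This follows from a lower bound $\sigma_{N-k}(P^N-z)\ge c\,k/N$ for $k\ge N^{1-\delta}$, and we expect the Tao–Vu distance argument to transfer through the linearization. The distance from a row to the span of the other rows is controlled by the bounded density on a subspace of codimension $k$. Alternatively, we can use the local law for the Hermitized linearization on the scale $\eta\ge N^{-\delta}$.

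Combining (i)–(iii) gives uniform integrability of $\log$ against the singular value distributions for a.e. $z$, since the set $\{\mathfrak{p}(0)\}$ is null. The replacement principle then concludes the proof. We expect step (ii) to be the main obstacle. The tensorized structure creates heavy dependence, and the Gaussian proof may use exact invariance in ways that must be rederived via density-based small-ball bounds uniformly over the compressible/incompressible decomposition. Step (iii) is the second delicate point.
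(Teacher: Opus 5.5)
Your overall architecture matches the paper's: Hermitization, moment convergence of $\nu_N^z$, a least singular value bound for $z\neq\mathfrak{p}(0)$, and control of small singular values. Step (i) is fine. The gaps are in (ii) and (iii).

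In (ii), a one-dimensional density bound for linear forms is neither what the Gaussian argument uses nor enough. That argument uses no rotation invariance and no compressible/incompressible split. Its Gaussian inputs are: (a) the multi-dimensional small-ball bound \eqref{lines1186} in the net argument for Theorem \ref{theorem3.4}; (b) chaos orthogonality for $\mathbb{E}|\det D_a(\xi)|^2$ in \eqref{determinantmeans}; and (c) Carbery--Wright for $\det D_a(\xi)$.

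In (a), each equation $(\mathfrak{D}_aB')_k$, $k\neq a$, must contribute a factor $\delta^{2\dim Y_\sigma(E)}$ from the projection of $\mathcal{G}_k$ onto $Y_\sigma(E)$. This beats the per-equation net entropy $\delta^{-2\dim C_{\tau,\sigma}(E)}$ (up to powers of $N$) only because of the surplus $\dim Y_\sigma(E)\geq\dim C_{\tau,\sigma}(E)+1$ from Proposition \ref{proposition1044}. With a factor $\delta^2$ per equation, the union bound fails whenever $\dim C_{\tau,\sigma}(E)\geq 1$. What you need is a density bound $(CK/\sigma)^d$ for $d$-dimensional images $AX$ with $AA^*\geq\sigma I$; this is Lemma \ref{lemma1519} and Corollary \ref{corollary6.8}, proved via Brascamp--Lieb. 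Even your one-dimensional claim does not follow from ``convolution of bounded densities'': that only gives $K/\|v\|_\infty^2$, which is of order $KN$ for delocalized $v$.

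In (b) and (c), $\det D_a(\xi)$ is a polynomial of degree $\dim Z$, not a linear or quadratic form. The paper first rewrites $\det D_a$ as a function of the bounded-dimensional vectors $U_{F^\ell}\zeta^\ell$, which have bounded density (Lemma \ref{lemma1519}) and bounded moments (Fact \ref{facts1518}). It then replaces chaos orthogonality by the moment comparison Lemma \ref{oftheorem3.1}, and Carbery--Wright by the Remez-type Proposition \ref{proposition6.2}. You could instead bound $\sigma_{\min}(D_a(\xi))$ via a net on the unit sphere of $Z$ together with the frame bound of Corollary \ref{corollary3.70} for rank-one test maps $wu^*$. But that route again needs the $(\dim Z)$-dimensional small-ball bound, since a factor $\epsilon^2$ cannot beat a net of size $\epsilon^{-(2\dim Z-1)}$.

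Step (iii) is also not established. The bound $\sigma_{N-k}(P^N-z)\geq ck/N$ is stronger than needed: a power law $\mathbb{E}\,\nu_N^z([0,x])\lesssim x^{1-c}$ for $x\geq N^{-c'}$ suffices. Neither of your suggested routes is justified.

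The Tao--Vu distance argument needs independent rows whose randomness is nondegenerate in every direction. The block rows of $\mathcal{L}_z$ are independent, but their random part lies in $\operatorname{span}(H_\ell)\otimes\mathbb{C}^N$, typically a proper subspace. So the complement of a codimension-$k$ span can be nearly orthogonal to it. This is the same degeneracy that forced the module-theoretic argument even for $\sigma_{\min}$.

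A local law would additionally need regularity of the limit $\nu^z$ at the origin, namely $|\Im g^z(i\eta)|\lesssim\eta^{-c}$ with $c<1$. This is a free-probability input you never supply. The paper takes the Gaussian bound \eqref{bythegaussianmodel} from \cite{cook2022spectrum}, which rests on \cite{shlyakhtenko2015freely} for the behavior of $g^z$ near $0$. It then transfers the bound to the law $\xi$ in Lemma \ref{lemmaline1522} using:
\begin{itemize}
\item truncation;
\item a two-sided comparison of small singular value counts of $P^N-z$ and $\mathcal{L}_z$, via the Schur-complement factorization;
\item the resolvent universality of \cite{brailovskaya2024universality} for the Hermitized linearization;
\item the pseudovariance correction of Lemma \ref{lemmaline1725}.
\end{itemize}
Without such an input, the integrability of $\log$ near $0$ in your Hermitization argument is unproven.
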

The moment assumption \eqref{finitenessofmoments} on $\xi$ can be weakened to requiring only finiteness up to $Q_\mathfrak{p}$-th moment, for some constant 
$Q_\mathfrak{p}>0$ depending only on the polynomial $\mathfrak{p}$. We still require in \eqref{densityassumption} that $\xi$ has a bounded density on $\mathbb{C}$, and this covers many non-Gaussian distributions such as the uniform distribution over several disjoint disks in the complex plane, or the convolution of a Gaussian distribution with another subgaussian distribution.

\subsection{Main ideas of the proof}
The matrix $P^N$ is not Hermitian, and to prove the convergence of its empirical spectral law, the standard method by now is to use Girko's idea \cite{girko1985circular} to do integration by parts by Green's formula and transfer the problem to the convergence of the integral of $\log(x)$ with respect to the empirical singular value distribution of $P^N-zI$. See Section \ref{section5.2for} for details. The main technical step in justifying the latter convergence is to prove that, for almost every $z\in\mathbb{C}$ with respect to the Lebesgue measure, the least singular value of $P^N-z$ is at least $N^{-C_z}$ for some constant $C_z$. Indeed, aside from this least singular value estimate, all the other steps needed in the proof of Brown measure convergence are relatively standard by now, by combining available technical toolboxes.

For a non-Hermitian square matrix, most existing methods for establishing a least singular value lower bound require independent entries. This is clearly not the case for $P^N$ as it involves a complicated polynomial $\mathfrak{p}$. A well-known method to handle this is to find a linearization matrix $\mathcal{T}_N$, which is a matrix of a larger dimension, such that its least singular value is no larger than the least singular value of $P^N-z$, and such that the least singular value of $\mathcal{T}_N$ is much easier to analyze.
There are several different ways to find a linearization matrix for $P^N-z$ and there is no clear consensus which linearization is the \textit{best} when $\deg\mathfrak{p}\geq 3$. In Section \ref{section2} we introduce one such linearization, and the resulting linearization matrix is written as
$$
\mathcal{L}_z(X)=K_z\otimes I_N+\sum_{\ell=1}^n H_\ell\otimes X_\ell,
$$where $K_z,H_\ell$ are square matrices of dimension $m$ and the dimension $m$ depends only on $\mathfrak{p}$.  

To prove a least singular value lower bound for $\mathcal{L}_z$, a first attempt would be to construct a net for vectors in $\mathbb{C}^{N\times m}$ and show that the Gaussian randomness beats the cardinality of the net. But a major problem arises here: do we know that the coefficients $H_\ell$ span the whole matrix space, or otherwise the covariance matrix of the Gaussian entries lives at a much lower dimension so that we do not have enough randomness at all to beat the net size? Unfortunately, we do not have sufficient structural information about the linearization matrices $H_\ell$ particularly when $\deg\mathfrak{p}\geq 3$, and in most interesting cases, the matrices $H_\ell$ do not span the whole space $M_m(\mathbb{C})$ and a naive net argument via Gaussian anti-concentration is insufficient. 

To simplify the problem, we note that $K_z$ is invertible when $z\neq \mathfrak{p}(0)$, so we multiply by $K_z^{-1}$ to turn $H_\ell$ into $H_\ell K_z^{-1}$ and turn $K_z$ to $I_m$. This transform is legitimate since $K_z$ is singular only at $z=\mathfrak{p}(0)$, and we simply ignore this value of $z$ by proving the estimate at all other $z\in\mathbb{C}$. The benefit is immediate: suppose that on some linear subspace of $\mathbb{C}^m$, each $H_\ell K_z^{-1}$ acts by 0, then the $I_m$ term preserves invertibility; whereas if $H_\ell K_z^{-1}$ acts non-trivially, we may use Gaussian anti-concentration from $H_\ell K_z^{-1}$. The intuition is mostly clear in 1-d, where $I_N+rX^N$, $X^N$ a Ginibre matrix, has nontrivial least singular value lower bound for any $r\in\mathbb{C}$. At this point one may guess that perhaps the exact structure of $H_\ell$ and $K_z^{-1}$ does not matter, and can one prove a least singular value lower bound for arbitrary coefficient matrices? This idea is stated in the following theorem (see Theorem \ref{theorems6.30} for the non-Gaussian generalization):
\begin{theorem}\label{theorem1290}
Fix integers $m$ and $n$, and $n$ matrices $R_1,\cdots,R_n\in M_m(\mathbb{C})$. Denote by $\mathcal{T}_N$ the following matrix in $M_{mN}(\mathbb{C})$, where $X_1^N,\cdots,X_n^N$ are complex Ginibre matrices:
$$
\mathcal{T}_N:=I_m\otimes I_N+\sum_{\ell=1}^nR_\ell\otimes X_\ell^N
.$$
Then we can find constants $C^R,c^R,c_0^R,c_1^R,N_0(R)$ depending on $n,m,R_1,\cdots,R_n$ such that for all $N\geq N_0(R)$ and any $\epsilon>0$,
 \begin{equation}
\mathbb{P}\{\sigma_{min}(\mathcal{T}_N)\leq\epsilon
\}\leq C^R(N^{c_0^R}\epsilon^{c^R}+\exp(-c_1^RN)).
    \end{equation} 
    Moreover, the two exponents $c_0^R,c^R$ and the constant $c_1^R$ can be chosen to depend only on the dimension $m$ and $n$, but not depend on the specific matrices $R_1,\cdots,R_n$.
\end{theorem}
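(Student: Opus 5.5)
Our plan is to prove the bound for $\mathcal{T}_N = I_{mN} + \sum_\ell R_\ell\otimes X_\ell^N$ by induction on $m$. We reduce the coefficient tuple $(R_1,\dots,R_n)$ to a block-triangular normal form and treat each block with Gaussian anti-concentration. The reduction rests on invariance. Conjugating by $S\otimes I_N$ with $S\in GL_m(\mathbb{C})$ replaces $R_\ell$ by $SR_\ell S^{-1}$ and keeps the identity term. It changes $\sigma_{min}$ by at most a factor $\|S\|\|S^{-1}\|$, which depends only on $R$; this is where the constant $C^R$ enters. Likewise, a unitary change of variables $X_\ell\mapsto \sum_k u_{\ell k}X_k$ preserves the joint Ginibre law, so we may also assume the $R_\ell$ are linearly independent.

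\textbf{Step 1 (reduction).} Let $\mathcal{A}$ be the unital algebra generated by $R_1,\dots,R_n$. If $\mathcal{A}$ has a nontrivial invariant subspace $V$, choose a basis adapted to $V$. Then $\mathcal{T}_N$ is block upper triangular, and its diagonal blocks are matrices of the same form, $I+\sum_\ell R_\ell^{(i)}\otimes X_\ell$, of smaller dimension. For a block triangular matrix $\begin{pmatrix}A&B\\0&D\end{pmatrix}$ we have
$$\sigma_{min}\geq \frac{\sigma_{min}(A)\,\sigma_{min}(D)}{\sigma_{min}(A)+\sigma_{min}(D)+\|B\|}\gtrsim \frac{\sigma_{min}(A)\,\sigma_{min}(D)}{1+\|B\|}.$$
Moreover, $\|B\|\leq C$ with probability $1-e^{-cN}$, since $\|X_\ell\|\le 3$ with that probability. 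Applying the induction hypothesis to $A$ and $D$ and taking a union bound gives the tail bound, with exponents $c^R,c_0^R$ depending only on $m$ and $n$ through the depth of the recursion. We are therefore reduced to two base cases: either all $R_\ell$ are simultaneously strictly upper triangular, or the diagonal blocks are irreducible. By Burnside's theorem, an irreducible block means that its $R_\ell^{(i)}$ generate all of $M_{m_i}(\mathbb{C})$. In the nilpotent case the diagonal blocks equal $I_N$, which gives the estimate trivially.

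\textbf{Step 2 (irreducible block).} This is the main obstacle. The $R_\ell$ generate $M_m$ as an algebra, but they need not span it linearly, so the naive net argument does not apply. Following the $\epsilon$-net strategy, we split unit vectors $v\in\mathbb{C}^{mN}$ into compressible and incompressible ones. Writing $v=(v_1,\dots,v_m)$ with $v_j\in\mathbb{C}^N$, the $N$-vector $(\mathcal{T}_Nv)_i$ equals $v_i+\sum_{\ell,j}(R_\ell)_{ij}X_\ell v_j$. Conditionally on $v$, each $X_\ell v_j$ is Gaussian. Its covariance in the index $i$ is governed by $\sum_\ell \big|\sum_j (R_\ell)_{ij} v_j\big|^2$, in a suitable operator sense. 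When $\sum_\ell\|R_\ell^{*}\text{-image}\|$ is bounded below, that is, when $\|(R_\ell\otimes I)v\|\gtrsim 1$ for some $\ell$, we obtain per-coordinate small-ball bounds with effective rank about $N$. These beat a net of size $(C/\epsilon)^{2mN}$ only after tensorization over enough coordinates. The delicate regime is vectors with $\|(R_\ell\otimes I)v\|$ small for all $\ell$. There $\mathcal{T}_Nv\approx v$, so $\|\mathcal{T}_Nv\|\approx 1$ unless $\|X\|$ is large, an event we exclude.

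For incompressible directions we use the invertibility-via-distance approach of Rudelson–Vershynin. Resampling one column block, meaning the $k$-th columns of all $X_\ell$, gives a fresh Gaussian vector in $\mathbb{C}^{N}$ tensored with the $m\times m$ coefficient matrix. We then need anti-concentration of its projection onto the orthogonal complement of the other $m(N-1)$ columns. Irreducibility is what we use here: via Burnside, we try to show that this $m$-dimensional random image is not trapped in a low-dimensional subspace. Such trapping would force a common invariant subspace of the $R_\ell$. Quantifying this is where polynomial losses $N^{c_0}\epsilon^{c}$ appear.

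An alternative I would try first is simpler. Condition on all but one Ginibre variable, say $X_1$ with $R_1\neq0$. Then $\det\mathcal{T}_N$ is a nonzero polynomial in the Gaussian entries of $X_1$ of degree at most $mN$; it is nonzero because it equals $1$ at $X=0$. Carbery–Wright-type bounds for Gaussian polynomials then give polynomial small-ball estimates for $|\det|$. Combining these with $\|\mathcal{T}_N\|\le C$ and $\sigma_{min}\ge|\det|/\|\mathcal{T}_N\|^{mN-1}$ loses exponentially in $N$, however, so this only works together with the distance argument. I expect Step 2 to be the genuine difficulty.
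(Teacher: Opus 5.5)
Your Step~1 is sound and is essentially the paper's reduction: a composition series with simple quotients, followed by a block-triangular inverse bound as in Fact~\ref{fact4570}. Conjugating by $S$ instead of using an orthogonal splitting only costs a constant that goes into $C^R$.

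The genuine gap is Step~2, which is the whole content of the theorem and which you leave as a plan. Your claim that trapping of the exposed block ``would force a common invariant subspace of the $R_\ell$'' is not a valid implication. The normal space $W_a=\ker\mathcal{T}_{-a}^*$ is random. Degeneracy of the exposed block's covariance means that some unit test map $A:Z\to W_a$ has components $A_i\in\operatorname{End}(Z)$ nearly orthogonal to $\Sigma_R=\operatorname{span}\{R_\ell\}$. Then $B_i:=P_{\Sigma_R^\perp}A_i$ is a profile of norm $\gtrsim 1$ in $(\Sigma_R^\perp)^N$ that approximately solves $B_k+\sum_{i,\ell}\overline{X_\ell(i,k)}R_\ell^*B_i\approx 0$ for all $k\neq a$. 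Nothing deterministic rules this out for a given realization of $X$. One must show it fails with probability $1-e^{-\Omega(N)}$, uniformly over all such profiles.

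The paper does this in Theorem~\ref{theorem3.4} by a net argument. Its algebraic input concerns the \emph{dual} action on $\operatorname{End}(Z)$, not the action on $Z$:
\begin{itemize}
\item By Burnside, $\operatorname{alg}(I,R_1^*,\dots,R_n^*)=\operatorname{End}(Z)$.
\item So a nonzero $E\subset\Sigma_R^\perp$ invariant under left multiplication by all $R_\ell^*$ would be a left ideal $\operatorname{End}(Z)P$. This forces $R_\ell P=0$, contradicting irreducibility (Proposition~\ref{propositionnoinvariance}).
\item Made robust, this gives $\dim Y_\sigma(E)\ge \dim C_{\tau,\sigma}(E)+1$ (Proposition~\ref{proposition1044}). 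For each $k\neq a$, the Gaussian term therefore anti-concentrates in one more complex dimension than the net entropy needed to locate $B_k$ among vectors of $E$ within distance $\rho$ of $Y(E)$.
\item Tensorized over $N-1$ coordinates, this single surplus dimension beats the net.
\end{itemize}
Making this rigorous also requires handling nearly degenerate singular values of $\Psi_E$, via active images at scale $\sigma$ and a gap dichotomy. It also requires handling nearly degenerate profile covariances, via regularity classes $\mathfrak{Reg}(d)$ with rapidly growing exponents $D_d$. None of this is present in your proposal.

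You also do not need a compressible/incompressible split. The block negative second moment identity (Lemma~\ref{blocksecondidentity}) reduces everything to the $\dim Z\times\dim Z$ exposed maps $D_a(\xi)=P_{W_a}C_a(\xi)$, via $\sigma_{min}(\mathcal{T}_N)\ge \min_a\sigma_{min}(D_a)/\sqrt{N\dim Z}$.

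Your ``alternative'' has the right endgame but applies Carbery--Wright to the wrong determinant. Conditioned on all block columns except the $a$-th, $\det D_a(\xi)$ is a polynomial of bounded degree $\dim Z$ in the Gaussians. The frame bound of Proposition~\ref{proposition}, which is exactly what Theorem~\ref{theorem3.4} provides, gives $\mathbb{E}|\det D_a|^2\gtrsim N^{-B_Z\dim Z}$ through the top Gaussian chaos. Carbery--Wright then yields a polynomial small-ball bound with no exponential loss, with exponents depending only on $\dim Z$. This is how the paper obtains exponents depending only on $m,n$, which your proposal does not yet deliver for the irreducible blocks.
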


Theorem \ref{theorem1290} may appear plausibly true although we still  do not know how to handle high dimensional matrix coefficients. The first idea is that we can do an algebraic reduction: we may suppose that $R_1,\cdots,R_n$ have no common nontrivial invariant subspace, which corresponds to assuming that $\mathbb{C}^m$ is a simple module under left $R_1,\cdots,R_n$ action. If this is not true, we can find a composition series, such that each quotient module is a simple left $R_1,\cdots,R_n$-module. See Section \ref{section3.11} for details. In this filtration, the matrix $\mathcal{T}_N$ is directly put into a block upper triangular form, and by simple linear algebra (Fact \ref{fact4570}) we know that we only need to get a least singular value lower bound for each diagonal block where the induced action of $R_1,\cdots,R_n$ acts irreducibly.

To handle each irreducible block, we will use the negative second moment identity (Lemma \ref{blocksecondidentity}) to lower bound the least singular value. The idea of the method is to find the normal vector to all columns except the $a$-th big column, and test the inner product of this $a$-th column against this normal vector. In our setting, any big column is spanned by coefficients $R_\ell$ multiplied by a Gaussian, therefore denoting $A=(A_1,\cdots,A_N)$ the unit normal to all columns except the $a$-th, we need to test the inner product of $A$ with these coefficients $R_\ell$. For fixed $(R_\ell)$ which are not all zero, the Gram matrix is non-degenerate, so this can be transferred to measuring the size of each component $A_i$ in the linear subspace spanned by $R_1,\cdots,R_n$. Therefore, if the final inner product is small in expectation, this would almost force each component $A_i$ to lie almost entirely in the orthogonal complement of $R_1,\cdots,R_n$ in $M_m(\mathbb{C})$. Let $B$ be the entry-wise projection of $A$ onto the orthogonal complement of $R_1,\cdots,R_n$, this implies that $B$ should approximately solve the following equation
\begin{equation}
    B_k+\sum_{i,\ell}\overline{X_\ell(i,k)}R_\ell^*B_i\to 0,\quad \forall k\neq a.
\end{equation}
We prove in Theorem \ref{theorem3.4} that this is impossible with very high probability.

Suppose that $B_1,\cdots,B_N$ span a subspace $E$, and denote $Y(E):=\operatorname{span}_{\ell=1}^n R_\ell^*E$. Then this means each $B_i$ should approximately lie in $E\cap Y(E)$ yet the Gaussian randomness has effective dimension $\dim Y(E)$. Temporarily ignoring very small coefficients, this means that if we know $\dim Y(E)\geq\dim (E\cap Y(E))+1$, then the Gaussian anti-concentration has one more effective dimension than the net size needed to cover the $B_i's$, and this net argument wins. This should imply that the components of $B$ should not all be nearly orthogonal to $\operatorname{span}(R_1,\cdots,R_n)$, completing the proof of Theorem \ref{theorem3.4}. 

So why do we have $\dim (E\cap Y(E))<\dim Y(E)$? This follows from our algebraic assumption that $R_i$ now acts irreducibly, and we can turn this into irreducibility of the action of $R_i^*$, ruling out the possibility that $Y(E)\subset E$ by irreducibility. This is the content of Proposition \ref{propositionnoinvariance}. In the above we have temporarily regarded inequalities with small error as equalities, and we ignored all possibilities of small but nonzero coefficients. In Section \ref{secitons4} we turn this into a rigorous proof allowing all margins, approximations and nearly degenerate coefficients. This gives a full proof of Theorem \ref{theorem1290}.

To the best of the author's knowledge, the use of simple algebraic ideas such as irreducible modules, composition series and irreducible dual actions in proving least singular value estimates appears to be new. Meanwhile, Theorem \ref{theorem1290} may be useful beyond the current setting of polynomials in Ginibre matrices: the theorem covers what may be called patterned Ginibre matrices of finite type and is further very flexible with respect to taking tensor products. The freedom in the choice of $R_i$ opens many possibilities for investigating many other structured Ginibre matrices via Theorem \ref{theorem1290}, which are not accessible otherwise.

\subsubsection{The non-Gaussian case} Generalizing the Gaussian result to the non-Gaussian setting further demands some hard work. The lack of Carbery-Wright inequality for non-log-concave measures requires us to prove a substitute estimate (Proposition \ref{proposition6.2}), and we develop moment comparisons to transfer the proven Gaussian case analysis to the general case. The free probability part (Lemma \ref{lemmaline1522}) also relies on some recent advances in strong convergence from \cite{brailovskaya2024universality}, see also \cite{van2025strong}.

\subsection{Some future directions}

We list here some future directions that are worth further pursuing, and they can be seen as natural next steps of this work.

\begin{Question}Does the singular value estimate in Theorem \ref{theoremonline89} hold at the point $z=\mathfrak{p}(0),$ so that in particular the matrix polynomial $P^N$ is invertible with high probability for a nonzero polynomial $\mathfrak{p}$ with $\mathfrak{p}(0)=0$? The answer is positive when $\mathfrak{p}$ has degree 2, as proven in \cite{cook2022spectrum}, but higher degrees are not covered.
\end{Question}

\begin{Question}
    When $\xi$ has the distribution of a real Gaussian, so that it has a density on $\mathbb{R}$ but still violates \eqref{densityassumption}, does the same Brown measure convergence result still hold? This is particularly interesting in the case where the polynomial coefficients of $\mathfrak{p}$ can take arbitrary complex numbers. The real bottleneck arises in the proof of Theorem \ref{theorem3.4} where a real Gaussian does not provide enough randomness to beat the entropy of the net.
\end{Question}

\begin{Question}
There are also more ambitious questions. First, can we cover distributions $\xi$ with no density, such as the Bernoulli measure? Already for a single Bernoulli matrix $X^N$, its least singular value estimates rely on elaborate technical tools such as inverse Littlewood-Offord theorem \cite{rudelson2008littlewood} and the obstruction to invertibility is arithmetic. But here for higher degree $\mathfrak{p}$, even with Gaussian entries, the obstruction to invertibility is structural rather than arithmetic, as one can read off from the proof in Section \ref{section33}. If we further consider Bernoulli entries, one would need to overcome both arithmetic and structural obstructions simultaneously. Second, this paper only considers $P^N$ depending on $X_1^N,\cdots,X_n^N$ but not their adjoints, and from a free probability viewpoint (see Section \ref{section5.1}) it is also interesting to study $P^N=\mathfrak{p}(X_1^N,\cdots,X_n^N,(X_1^N)^*,\cdots,(X_n^N)^*)$. The current framework does not cover this model.   
\end{Question}

\subsection{The Brown measure limit}\label{section5.1}This section reviews the definition of the Brown measure limit, while the main contents are referenced from \cite[Section 1.4]{cook2022spectrum}. A more comprehensive introduction to free probability can be found in \cite[Chapter 5]{anderson2010introduction} or \cite[Chapter 11]{mingo2017free}.

We recall that a tracial $W*$-probability space $(\mathcal{A},\tau)$ consists of a von Neumann algebra $\mathcal{A}$ and a tracial faithful normal state $\tau$ on $\mathcal{A}$. By Voiculescu's result \cite{voiculescu1991limit}, we have the convergence of the non-commutative distribution of $(X_1^N,\cdots,X_n^N)$ in $*$-moments to the non-commutative distribution for $n$ free circular elements $(c_1,\cdots,c_n)\in\mathcal{A}$. The convergence is in the sense that for any polynomial $\mathfrak{p}\in\mathbb{C}\langle x_1,\cdots,x_n,x_1^*,\cdots,x_n^*\rangle$ we have
$$
\lim_{N\to\infty}\frac{1}{N}\operatorname{Tr}(\mathfrak{p}(X_1^N,\cdots,X_n^N,(X_1^N)^*,\cdots,(X_n^N)^*))=\tau(\mathfrak{p}(c_1,\cdots,c_n,c_1^*,\cdots,c_n^*)).
$$
where the right-hand side is a linear map on the set of polynomials, and is uniquely defined by its values on monomials. For any monomial where we choose $i_1,\cdots,i_k\in[1,n]$ and any $\epsilon_1,\cdots,\epsilon_k\in\{1,*\}$, $\tau(c_{i_1}^{\epsilon_1}\cdots c_{i_k}^{\epsilon_k})$ equals the number of non-crossing pair partitions of $\{1,\cdots,k\}$ where each block $b=(b_1,b_2)$ satisfies that $i_{b_1}=i_{b_2}$ and furthermore $(\epsilon_{b_1},\epsilon_{b_2})=(1,*)\text{ or }(*,1)$. Since polynomial functions are dense among the set of continuous functions, using the fact that Ginibre matrices are bounded with exponentially high probability, the latter implies that for any bounded continuous function $f$ and self-adjoint polynomial $\mathfrak{p}$, 
$$
\lim_{N\to\infty}\frac{1}{N}\operatorname{Tr}(f(\mathfrak{p}(X_1^N,\cdots,X_n^N,(X_1^N)^*,\cdots,(X_n^N)^*)))=\tau(f(\mathfrak{p}(c_1,\cdots,c_n,c_1^*,\cdots,c_n^*)))\quad a.s.
$$For self-adjoint $\mathfrak{p}$, this immediately guarantees convergence of empirical spectral law of $\mathfrak{p}(X_1^N,\cdots,X_n^N,(X_1^N)^*,\cdots,(X_n^N)^*)$ toward the distribution of $\mathfrak{p}(c_1,\cdots,c_n,c_1^*,\cdots,c_n^*)$. For non-self-adjoint $\mathfrak{p}$, the convergence is no longer justified so easily. Girko's idea \cite{girko1985circular} of integration by parts via Green's formula suggests that the empirical measure for eigenvalues of $\mathfrak{p}(X_1^N,\cdots,X_n^N,(X_1^N)^*,\cdots,(X_n^N)^*)$ should converge to the Brown measure defined below, which is given for any twice continuously differentiable function $\psi$ with bounded support on $\mathbb{C}$ via 
\begin{equation}\label{righthandsideof}
\int_\mathbb{C}\psi(\lambda)d\nu_{\mathfrak{p}(c_1,\cdots,c_n,c_1^*,\cdots,c_n^*)}(\lambda):=\frac{1}{4\pi}\int \Delta\psi(z)\tau(\log|z-\mathfrak{p}(c_1,\cdots,c_n,c_1^*,\cdots,c_n^*)|^2)dz.
\end{equation}
To see why this limit is anticipated, note that for any twice continuously differentiable function $\psi$ with bounded support and complex numbers $\lambda_i,1\leq i\leq N$, integration by parts yields
\begin{equation}
    \frac{1}{N}\sum_{i=1}^N\psi(\lambda_i)=\frac{1}{2\pi}\int\Delta\psi(z)\frac{1}{N}\sum_{i=1}^N\log|z-\lambda_i|dz,
\end{equation}and $dz$ is integration over Lebesgue measure on $\mathbb{C}$. Taking the $\lambda_i,1\leq i\leq N$ as eigenvalues of $\mathfrak{p}(X_1^N,\cdots,X_n^N,(X_1^N)^*,\cdots,(X_n^N)^*)$ the above implies that
$$
\frac{1}{N}\sum_{i=1}^N\psi(\lambda_i)=\frac{1}{4\pi}\int\Delta\psi(z)\frac{1}{N}\operatorname{Tr}\left(\log|z-\mathfrak{p}(X_1^N,\cdots,X_n^N,(X_1^N)^*,\cdots,(X_n^N)^*)|^2\right)dz.
$$Observe that the function $|z-\mathfrak{p}(X_1^N,\cdots,X_n^N,(X_1^N)^*,\cdots,(X_n^N)^*)|^2$ is self-adjoint in the entries, and ignoring the singularity and unboundedness of the logarithm we can heuristically take the limit and obtain the right-hand side of \eqref{righthandsideof}.

In this paper we restrict ourselves to holomorphic polynomials $\mathfrak{p}$ so that $\mathfrak{p}$ only depends on $X_1^N,\cdots,X_n^N$ but not their adjoints. Theorem \ref{mainconvergencetheorem} proves that, for this class of holomorphic polynomials $\mathfrak{p}$ of any degree, the spectrum of $\mathfrak{p}(X_1^N,\cdots,X_n^N)$ converges to the Brown measure when evaluated at complex Ginibre matrices.

\subsection{Notation and list of symbols}
Throughout the paper, $M_m(\mathbb{C})$ denotes the linear space of complex-valued square matrices of dimension $m$.  More generally, $M_{m_1,m_2}(\mathbb{C})$ consists of complex matrices of dimension $m_1\times m_2$. The latter space is equipped with the Hilbert-Schmidt norm: for each $A\in M_{m_1,m_2}(\mathbb{C})$, 
$$
\|A\|_{\text{HS}}=\left(\sum_{s=1}^{m_1}\sum_{t=1}^{m_2}|A_{s,t}|^2\right)^\frac{1}{2}.
$$
Then $M_m(\mathbb{C})$ is a Hilbert space with respect to the inner product $\langle A,B\rangle=\operatorname{Tr}(AB^*)$.

Let $\mathcal{H}$ be a complex Hilbert space of complex dimension $d$. Then the
complex Grassmannian $\operatorname{Gr}_\mathbb{C}(k,\mathcal{H})$ consists of linear subspaces of $\mathcal{H}$ of complex dimension $k$. The Grassmannian has real dimension $2k(\dim \mathcal{H}-k)$ and is equipped with the following projection metric 
    $$
\rho(E,E'):=\|P_E-P_{E'}\|_{\text{op}},
    $$where $P_E,P_{E'}$ are orthogonal projections from $\mathcal{H}$ onto $E$ and $E'$ respectively. We can find a $\delta$-net $\mathcal{G}_\delta$ in this Grassmannian with cardinality
    $$
|\mathcal{G}_\delta|\leq(\frac{C}{\delta})^{2k(d-k)}
    $$for some universal constant $C>0$.

\section{First reductions}
\label{section2}
This section defines a linearization matrix for $P^N-zI$, and reduces the least singular value proof of Theorem \ref{theoremonline89} to the proof of Theorem \ref{theorem1290}. Linearization of matrix polynomials has seen many applications in random matrix theory and computations, see for instance \cite{helton2018applications}, \cite{mackey2006vector}.

\subsection{Tensor train decomposition of the polynomial}

We introduce a linearization for the polynomial $\mathfrak{p}$. Indeed, there are many different ways to find such a linearization, and our main proof mechanisms are not specific to one particular linearization matrix. We use the tensor train decomposition introduced in \cite{oseledets2011tensor}.

First write $\mathfrak{p}$ in the following form:
$$
\mathfrak{p}(x)=a_\emptyset+\sum_{\ell=1}^n b_\ell x_\ell+\sum_{q=2}^d\mathfrak{p}_q(x),
$$where $d$ is the maximal degree of $\mathfrak{p}$ and $\mathfrak{p}_q(x)$ is the homogeneous degree-q part of $\mathfrak{p}(x)$.
We can further write each $\mathfrak{p}_q(x)$ as
$$
\mathfrak{p}_q(x)=\sum_{i_1,\cdots,i_q}a_{i_1,\cdots,i_q}^{(q)}x_{i_1}\cdots x_{i_q} 
.$$

For each nonzero $\mathfrak{p}_q$, we choose a minimal Tensor Train/Hankel factorization as follows. Specifically, define the Tensor Train ranks 
$$
r_{q,s}:=\operatorname{rank}[a^{(q)}_{i_1\cdots i_s;i_{s+1}\cdots i_q}],\quad s=1,\cdots,q-1.
$$That is, we flatten the degree-$q$ coefficient tensor across the split 
$$
(i_1,\cdots,i_s)\mid (i_{s+1},\cdots,i_q).
$$
Then we define the intermediate state spaces
$$
E_{q,s}\eqsim \mathbb{C}^{r_{q,s}},\quad s=1,\cdots,q-1. 
$$ These homogeneous components share the same scalar root space $E_\star=\mathbb{C}$. Then for this polynomial $\mathfrak{p}$, we consider the following bouquet linearization space 
$$E=E(\mathfrak{p})=E_\star\oplus \oplus_{2\leq q\leq d,\mathfrak{p}_q\neq 0}
\oplus_{s=1}^{q-1}E_{q,s}
.$$
The subspace $E$ has dimension 
$$
m=m(\mathfrak{p})=1+\sum_{2\leq q\leq d:\mathfrak{p}_q\neq 0}\sum_{s=1}^{q-1}r_{q,s}.
$$
For this degree-q part $\mathfrak{p}_q$ we define a family of maps $G_{q,1}(\cdot),\cdots,G_{q,q}(\cdot)$ such that the argument of each function takes values in $[1,n]$ and 
$$
G_{q,s}(i):E_{q,s}\to E_{q,s-1},\quad 1\leq s\leq q
$$ is a linear map for each $1\leq i\leq n$, with $E_{q,0}=E_{q,q}=\mathbb{C}$, and the coefficient tensor can be written in the following tensor train form:
\begin{equation}\label{tensortrainforms}
a_{i_1\cdots i_q}^{(q)}=G_{q,1}(i_1)G_{q,2}(i_2)\cdots G_{q,q}(i_q)
.\end{equation}

The maps $G_{q,1}(\cdot),\cdots,G_{q,q}(\cdot)$ can be constructed as follows (the construction is not unique).

Start with the flattening
$$
M_1(i_1;i_2,\cdots,i_q)=a^{(q)}_{i_1\cdots i_q}.
$$Then factor it with rank $r_{q,1}$: 
$$
M_1=U_1V_1.
$$Then define $$\operatorname{G_{q,1}}(i_1):=\operatorname{row } i_1\text{ of }U_1$$ so that $G_{q,1}(i_1)\in M_{1,r_{q,1}}$. The remaining tensor is $V_1(\alpha_1;i_2,\cdots,i_q)$. Next we flatten $V_1$ via 
$$
(\alpha_1,i_2)\mid (i_3,\cdots,i_q).
$$Factor it with rank $r_{q,2}$ so we write $$V_1(\alpha_1,i_2;i_3,\cdots,i_q)=U_2V_2
$$
and define 
$$
G_{q,2}(i_2)_{\alpha_1}:=\text{ the $(\alpha_1,i_2)$-th row of }U_2
$$so that $G_{q,2}(i_2)\in M_{r_{q,1},r_{q,2}}$. Proceed in this way, for each $1\leq s\leq q-2$ and each $V_s(\alpha_s,i_{s+1},\cdots,i_q)$ we write 
$$
V_s(\alpha_s,i_{s+1};i_{s+2},\cdots,i_q)=U_{s+1}V_{s+1}
$$ and take $G_{q,s+1}(i_{s+1})_{\alpha_s}$ as the $(\alpha_s,i_{s+1})$-th row of $U_{s+1}$. And finally we define 
$$
G_{q,q}(i_q)_{\alpha_{q-1},1}=V_{q-1}(\alpha_{q-1};i_q).
$$
 This defines $G_{q,3},\cdots,G_{q,q}$.

Then equation \eqref{tensortrainforms} is directly fulfilled by the above definitions. This can be verified inductively:
$$\begin{aligned}
a_{i_1\cdots i_q}^{(q)}&=\sum_{\alpha_1=1}^{r_{q,1}}U_1(i_1,\alpha_1)V_1(\alpha_1,i_2,\cdots,i_q)\\&=\sum_{\alpha_1=1}^{r_{q,1}}G_{q,1}(i_1)\sum_{\alpha_2=1}^{r_{q,2}}U_2(\alpha_1,i_2;\alpha_2)V_2(\alpha_2,i_3,\cdots,i_q)=\cdots. \end{aligned}
$$

\subsection{Linearization step and generated subalgebra}\label{215365}

We will define the linearization matrix in a specific way tailor-made for our goal. For a compact $z$-region, say $|z|\leq R$, consider the root scalar $\theta_*(z)=a_\emptyset-z$.
Then basically any choice of nonzero constants $$\eta_{q,s}\in\mathbb{C}\setminus\{0\},\quad 2\leq q\leq d,1\leq s\leq q-1$$
can generate a valid linearization. For future applications, when $z\neq a_\emptyset$, we fix one choice
\begin{equation}\label{equalitys}
\eta_{q,s}=a_\emptyset -z,\quad 2\leq q\leq d,1\leq s\leq q-1,
\end{equation}
and when $z=a_\emptyset$ we simply take $\eta_{q,s}=1$ for all $q,s$.

Denote by $d_\mathfrak{p}:=\deg\mathfrak{p}$ the maximal degree of the polynomial $\mathfrak{p}$.
Then we define a  deterministic diagonal matrix $K_z$ via 
$$
K_z=(a_\emptyset-z)P_\star+\sum_{2\leq q\leq d_\mathfrak{p},\mathfrak{p}_q\neq 0}\sum_{s=1}^{q-1}\eta_{q,s}P_{q,s},
$$
where $P_\star$ and $P_{q,s}$ are orthogonal projections onto $E_\star$ and $E_{q,s}$. For $z\neq a_\emptyset$, by our definition,
$$
K_z=(a_\emptyset-z)I_E.
$$
so $K_z$ acts as a scalar multiple on the whole space $E$.

Next, we define the edge matrices $H_\ell$.
For each letter $\ell\in\{1,\cdots,n\}$, we define $H_\ell\in M_m(\mathbb{C})$ via the following steps. 

First include the linear part of $\mathfrak{p}$ by setting 
$$
(H_\ell)_{\star,\star}=b_\ell.
$$
Then for each homogeneous chain $\mathfrak{p}_q,q\geq 2$ we define the following chain scaling constant 
$$
\mu_q:=(-1)^{q-1}\prod_{s=1}^{q-1}\eta_{q,s}.
$$Then we enter the following blocks in $H_\ell$:
$$(H_\ell)_{E_\star,E_{q,1}}=\mu_q G_{q,1}(\ell),
$$

$$
(H_\ell)_{E_{q,s-1},E_{q,s}}=G_{q,s}(\ell),\quad 2\leq s\leq q-1,
$$and finally set 
$$
(H_\ell)_{E_{q,q-1},E_\star}=G_{q,q}(\ell),
$$as we identify $E_*\simeq E_{q,q}\simeq \mathbb{C}$.
We finally set all other blocks to be zero except those already defined. Then we define the following linearization

\begin{equation}\label{equation325}
\mathcal{L}_z(X)=K_z\otimes I_N+\sum_{\ell=1}^n
H_\ell\otimes X_\ell.\end{equation}

We now check that this is the correct Schur complement formula:

\begin{lemma}\label{lemma2.222}
The following formula for inverse matrices holds, whenever the inverses exist: 
$$
[(\mathcal{L}_z(X))^{-1}]_{[1,1]}=(P^N(X)-zI_N)^{-1},
$$where $[\cdot ]_{1,1}$ denotes the rows and columns labeled by the root coordinates $E_\star\otimes \mathbb{C}^N\equiv\mathbb{C}^N$.
\end{lemma}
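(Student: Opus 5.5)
The plan is to read off the claimed identity from the Schur complement formula for a $2\times 2$ block decomposition of $\mathcal{L}_z(X)$. We split $E\otimes\mathbb{C}^N=(E_\star\otimes\mathbb{C}^N)\oplus(E'\otimes\mathbb{C}^N)$, where $E'=\oplus_{q}\oplus_{s=1}^{q-1}E_{q,s}$ is the union of all chain spaces, and write
$$
\mathcal{L}_z(X)=\begin{pmatrix} A & B\\ C & D\end{pmatrix}.
$$
Reading off the definition of $K_z$ and $H_\ell$, the blocks are as follows.
\begin{itemize}
\item The root block is $A=(a_\emptyset-z)I_N+\sum_\ell b_\ell X_\ell$.
\item $B$ has nonzero blocks only into the columns $E_{q,1}\otimes\mathbb{C}^N$, equal to $\mu_q\mathbf{G}_{q,1}$. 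Here we abbreviate $\mathbf{G}_{q,s}:=\sum_\ell G_{q,s}(\ell)\otimes X_\ell$.
\item $C$ has nonzero blocks only from the rows $E_{q,q-1}\otimes\mathbb{C}^N$, equal to $\mathbf{G}_{q,q}$.
\item $D$ is block diagonal over the chains $q$. Each chain block is $D_q=\Lambda_q+\mathcal{N}_q$, where $\Lambda_q=\operatorname{diag}(\eta_{q,1},\dots,\eta_{q,q-1})\otimes I_N$ and $\mathcal{N}_q$ is block strictly upper triangular, with superdiagonal blocks $(\mathcal{N}_q)_{s-1,s}=\mathbf{G}_{q,s}$ for $2\le s\le q-1$.
\end{itemize}
All $\eta_{q,s}\neq0$, so $D$ is always invertible. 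The standard Schur complement identity then gives: $\mathcal{L}_z(X)$ is invertible iff $S:=A-BD^{-1}C$ is invertible, and in that case $[\mathcal{L}_z(X)^{-1}]_{[1,1]}=S^{-1}$. It therefore suffices to prove the identity $S=P^N(X)-zI_N$.

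To compute $D_q^{-1}$ we use that $\Lambda_q^{-1}\mathcal{N}_q$ is nilpotent, so the Neumann series
$$
D_q^{-1}=\sum_{k\ge0}(-1)^k(\Lambda_q^{-1}\mathcal{N}_q)^k\Lambda_q^{-1}
$$
terminates. Because $B$ only touches $E_{q,1}$ and $C$ only touches $E_{q,q-1}$, only the $(1,q-1)$ block of $D_q^{-1}$ enters $BD^{-1}C$. In that block only the term $k=q-2$ survives, namely the unique path $E_{q,1}\to E_{q,2}\to\cdots\to E_{q,q-1}$. This gives
$$
(D_q^{-1})_{1,q-1}=(-1)^{q-2}\Big(\prod_{s=1}^{q-1}\eta_{q,s}\Big)^{-1}\mathbf{G}_{q,2}\cdots\mathbf{G}_{q,q-1}.
$$
For $q=2$ there are no intermediate factors, and the block is just $\eta_{q,1}^{-1}I$. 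Multiplying by the prefactor $\mu_q=(-1)^{q-1}\prod_s\eta_{q,s}$ coming from $B$, the chain $q$ contributes
$$
-\,\mathbf{G}_{q,1}\mathbf{G}_{q,2}\cdots\mathbf{G}_{q,q}
$$
to $BD^{-1}C$.

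Next, the mixed-product rule $(G\otimes X)(G'\otimes X')=GG'\otimes XX'$ lets us expand this product. Since $\mathbf{G}_{q,1}$ has a $1$-dimensional row space and $\mathbf{G}_{q,q}$ has a $1$-dimensional column space, the product is an $N\times N$ matrix, and together with \eqref{tensortrainforms} we get
$$
\mathbf{G}_{q,1}\cdots\mathbf{G}_{q,q}=\sum_{i_1,\dots,i_q}G_{q,1}(i_1)\cdots G_{q,q}(i_q)\,X_{i_1}\cdots X_{i_q}=\mathfrak{p}_q(X).
$$
Summing over the chains $q$ yields $S=A+\sum_q\mathfrak{p}_q(X)=P^N(X)-zI_N$, as required. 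This argument uses only $\eta_{q,s}\neq0$, so it covers both choices, \eqref{equalitys} and $\eta_{q,s}=1$.

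The main point needing care is the bookkeeping in the Neumann series. One has to check that the block-sparsity pattern of $H_\ell$ (root $\to E_{q,1}\to\cdots\to E_{q,q-1}\to$ root, with no cross-chain blocks) lets exactly one path contribute. One also has to check that the signs $(-1)^{q-2}$ from the expansion and $(-1)^{q-1}$ from $\mu_q$ combine to produce exactly $+\mathfrak{p}_q$ in $A-BD^{-1}C$. The degenerate case $q=2$, with no middle blocks, should be checked separately.
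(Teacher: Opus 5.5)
Your proposal is correct and follows essentially the same route as the paper: a Schur complement onto the root block, then the $(1,q-1)$ block of the upper-triangular chain block $D_q^{-1}$, whose signs and $\eta$'s cancel against $\mu_q$ to give $-\mathbf{G}_{q,1}\cdots\mathbf{G}_{q,q}$, summed over the block-diagonal chains. The only cosmetic differences are that you derive $(D_q^{-1})_{1,q-1}$ from the terminating Neumann series and treat all chains at once, whereas the paper states the triangular inverse directly and handles a single chain first.
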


\begin{proof}
    We first consider the simpler case of a single degree-$q$ chain. Write $${G}_{q,s}(X):=\sum_{\ell=1}^n G_{q,s}(\ell)\otimes X_\ell.$$
Then working inside the degree-$q$ auxiliary chain the auxiliary block is 
\begin{equation}\label{schurcomplements}
D_q(X)=\begin{bmatrix}
    \eta_{q,1}I&G_{q,2}(X)&0&\cdots&0\\0&\eta_{q,2}I&G_{q,3}(X)&\cdots&0\\\ddots&\ddots&\ddots&\ddots&\ddots\\0&\cdots&0&\eta_{q,q-2}I&G_{q,q-1}(X)\\0&\cdots&0&0&\eta_{q,q-1}I
\end{bmatrix}.
\end{equation}
The root-to-chain block is given by
$$
B_q(X)=(\mu_q {G}_{q,1}(X),0,\cdots,0)
$$

and the chain-to-root block is given by 
$$
C_q(X)=(0,\cdots 0,G_{q,q}(X))^T.
$$
The matrix $D_q(X)$ is upper triangular with invertible diagonal blocks, so that its $(1,q-1)$-block is 
$$
(D_q(X)^{-1})_{1,q-1}=(-1)^{q-2}\frac{G_{q,2}(X),\cdots G_{q,q-1}(X)}{\prod_{s=1}^{q-1}\eta_{q,s}}
.$$
Therefore, 
$$
B_qD_q^{-1}C_q=\mu_q(-1)^{q-2}\frac{G_{q,1}(X)\cdots G_{q,q}(X)}{\prod_{s=1}^{q-1}\eta_{q,s}}.
$$By definition of $\mu_q$, this gives 
    $$
B_qD_q^{-1}C_q=-G_{q,1}(X)\cdots G_{q,q}(X).
    $$ By the Schur complement formula, we have 
    $$
[(\mathcal{L}_z(X))^{-1}]_{1,1}=(a_\emptyset -z+G_{q,1}(X)\cdots G_{q,q}(X))^{-1}=(P_q^N(X)-z)^{-1}, 
    $$where $P_q^N(X)$ is the degree-$q$ part of $P^N(X)$ plus the constant term.
    This verifies the claim when $\mathfrak{p}$ is homogeneous of degree $q$. 

For the general case, note that the full auxiliary block is block diagonal over all these homogeneous degree$-q$ chains with degree $q\geq 2$, and exactly the same computation shows that the contributions from different degrees sum up to the Schur complement. Therefore, the Schur complement at the root is precisely 
$$
(a_\emptyset-z)I_N+\sum_{\ell=1}^n b_\ell X_\ell+\sum_{2\leq q\leq d,\mathfrak{p}_q\neq 0}G_{q,1}(X)\cdots G_{q,q}(X),
$$
which is exactly $P^N(X)-zI_N$.\end{proof}

We illustrate our construction for polynomials of degree 3:

\begin{example}[Degree-three chain]
Suppose
\[
        \mathfrak{p}(x)=a_\emptyset+\mathfrak{p}_3(x),\qquad
        \mathfrak{p}_3(x)=\sum_{a,b,c=1}^n t_{abc}x_ax_bx_c .
\]
Let
\[
        E_0:=E_\star=\mathbb C,\qquad
        E_1\simeq \mathbb C^{r_1},\qquad
        E_2\simeq \mathbb C^{r_2},
\]
where \(r_1,r_2\) are the tensor-train ranks of the coefficient tensor
\((t_{abc})\). Thus we may write
\[
        t_{abc}=A_aB_bC_c,
\]
where
\[
        A_a:E_1\to E_0,\qquad
        B_b:E_2\to E_1,\qquad
        C_c:E_0\to E_2.
\]
For matrices \(X_1,\ldots,X_n\), define
\[
        A(X):=\sum_{a=1}^n A_a\otimes X_a,\qquad
        B(X):=\sum_{b=1}^n B_b\otimes X_b,\qquad
        C(X):=\sum_{c=1}^n C_c\otimes X_c .
\]
Then
\[
        A(X)B(X)C(X)
        =
        \sum_{a,b,c=1}^n A_aB_bC_c\otimes X_aX_bX_c
        =
        \mathfrak{p}_3(X).
\]

Choose the two nonzero scalars \(\eta_1=\eta_2=a_\emptyset-z\) (when $a_\emptyset\neq z$, or take $\eta_1=\eta_2$=1 when $a_\emptyset=z$) and set
\[
        \mu:=\eta_1\eta_2 .
\]
With respect to the decomposition
\[
        E=E_0\oplus E_1\oplus E_2,
\]
the corresponding linearization is
\[
        \mathcal L_z(X)=
        \begin{pmatrix}
        (a_\emptyset-z)I_N & \mu A(X) & 0\\
        0 & \eta_1 I_{E_1}\otimes I_N & B(X)\\
        C(X) & 0 & \eta_2 I_{E_2}\otimes I_N
        \end{pmatrix}.
\]
Equivalently,
\[
        \mathcal L_z(X)
        =
        K_z\otimes I_N+\sum_{\ell=1}^n H_\ell\otimes X_\ell,
\]
where
\[
        K_z=
        \begin{pmatrix}
        a_\emptyset-z & 0 & 0\\
        0 & \eta_1 I_{E_1} & 0\\
        0 & 0 & \eta_2 I_{E_2}
        \end{pmatrix},
        \qquad
        H_\ell=
        \begin{pmatrix}
        0 & \mu A_\ell & 0\\
        0 & 0 & B_\ell\\
        C_\ell & 0 & 0
        \end{pmatrix}.
\]

The auxiliary block is
\[
        D(X)=
        \begin{pmatrix}
        \eta_1 I_{E_1}\otimes I_N & B(X)\\
        0 & \eta_2 I_{E_2}\otimes I_N
        \end{pmatrix}.
\]
Since \(D(X)\) is upper triangular, its \((1,2)\)-block inverse is
\[
        -(\eta_1\eta_2)^{-1}B(X).
\]
Therefore the Schur complement of the auxiliary block in \(\mathcal L_z(X)\) is
\[
\begin{aligned}
        &(a_\emptyset-z)I_N
        -
        (\mu A(X),0)
        D(X)^{-1}
        \binom{0}{C(X)}        \\
        &\qquad =
        (a_\emptyset-z)I_N
        +\mu(\eta_1\eta_2)^{-1}A(X)B(X)C(X)\\
        &\qquad =
        (a_\emptyset-z)I_N+\mathfrak{p}_3(X)
        =
        \mathfrak{p}(X)-zI_N .
\end{aligned}
\]
In the simplest non-quadratic case, this example exhibits how the cyclic block construction
turns the monomial chain \(A(X)B(X)C(X)\) into the root Schur complement.
\end{example}
\subsection{Operator norm upper bound}

Since the Ginibre matrices $X_i^N$ have bounded operator norm with high probability, we will work on this event throughout the article:

\begin{fact}\label{fact2.7}
    There exists a constant $c_n>0$ such that, with probability at least $1-e^{-c_nN}$, we have $\|X_i^N\|_{op}\leq 8$ for each $1\leq i\leq n$. Under this event, we can find a constant $C_\mathfrak{p}>0$ such that 
    $$
\|\mathfrak{p}(X_1^N,\cdots,X_n^N)\|_{\text{op}}\leq \frac{1}{2}C_\mathfrak{p},
    $$and thus Theorem \ref{theoremonline89} only needs to be proven for all $z\in\mathbb{C}:|z|\leq C_\mathfrak{p}$. Then we can find a constant $K_\mathfrak{p}$ depending only on $\mathfrak{p}$ and the choice of linearization parameters $\eta_{q,s}$ in Section \ref{215365} with $|z|\leq C_\mathfrak{p}$, such that 
    \begin{equation}\label{lines366}
\|K_z\|_{\text{HS}}^2+\sum_{\ell=1}^n\|H_\ell\|_{\text{HS}}^2\leq K_\mathfrak{p},\quad\forall z:|z|\leq C_\mathfrak{p}.\end{equation}
\end{fact}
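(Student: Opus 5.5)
The statement is Fact \ref{fact2.7}, and it has three parts. First, an exponential tail bound for the operator norm of each Ginibre matrix. Second, a deterministic bound on $\|\mathfrak{p}(X)\|_{\text{op}}$ on that event, which also justifies restricting attention to $|z|\leq C_\mathfrak{p}$. Third, a uniform bound on the Hilbert--Schmidt norms of the linearization coefficients. I will prove them in that order.

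For the tail bound, I will use Gaussian concentration. The map $G\mapsto\|G\|_{\text{op}}$ is $1$-Lipschitz with respect to the Hilbert--Schmidt (Euclidean) norm on the entries. Each entry of $X_i^N$ is $N^{-1/2}$ times a standard complex Gaussian, so $\|X_i^N\|_{\text{op}}$ is a $N^{-1/2}$-Lipschitz function of a standard Gaussian vector in $\mathbb{R}^{2N^2}$. Gaussian concentration then gives
$$\mathbb{P}\big(\|X_i^N\|_{\text{op}}\geq \mathbb{E}\|X_i^N\|_{\text{op}}+t\big)\leq e^{-Nt^2}.$$
Next, $\mathbb{E}\|X_i^N\|_{\text{op}}\leq 2+o(1)$; the cruder bound $\leq 4$, which follows from a standard $\epsilon$-net argument or from Gordon/Slepian comparison, is enough here. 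Taking $t=4$ and a union bound over $i=1,\dots,n$ gives probability at least $1-ne^{-16N}\geq 1-e^{-c_nN}$ for $N$ large. Small $N$ is absorbed by shrinking $c_n$, since the probability of the complementary event is strictly less than $1$ for every fixed $N$.

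On this event, I expand $\mathfrak{p}$ into monomials. By submultiplicativity and the triangle inequality,
$$\|\mathfrak{p}(X)\|_{\text{op}}\leq |a_\emptyset|+\sum_{q\geq1}\sum_{i_1,\dots,i_q}|a^{(q)}_{i_1\cdots i_q}|\,8^q=:\tfrac12 C_\mathfrak{p}.$$
Now let $|z|>C_\mathfrak{p}$. Then $\sigma_{\min}(P^N-z)\geq |z|-\|P^N\|_{\text{op}}\geq C_\mathfrak{p}/2$. So for $\epsilon<C_\mathfrak{p}/2$, the event $\{\sigma_{\min}(P^N-z)\leq\epsilon\}$ is contained in the complement of the norm event, and its probability is at most $e^{-c_nN}$. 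For $\epsilon\geq C_\mathfrak{p}/2$, the bound in Theorem \ref{theoremonline89} is trivial once $\widehat C$ is chosen large. This reduces Theorem \ref{theoremonline89} to the range $|z|\leq C_\mathfrak{p}$.

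For \eqref{lines366}, everything is deterministic. $K_z$ is diagonal with entries $a_\emptyset-z$ or $1$, so $\|K_z\|_{\text{HS}}^2\leq m(|a_\emptyset|+C_\mathfrak{p}+1)^2$. The entries of $H_\ell$ are $b_\ell$, the fixed tensor-train factors $G_{q,s}(\ell)$, and $\mu_qG_{q,1}(\ell)$, where $|\mu_q|\leq(|a_\emptyset|+C_\mathfrak{p}+1)^{q-1}$. All of these depend only on $\mathfrak{p}$, so they are bounded uniformly over $|z|\leq C_\mathfrak{p}$, and summing gives $K_\mathfrak{p}$.

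There is no real obstacle here. The only point that needs care is getting an explicit constant in the expectation bound on $\|X_i^N\|_{\text{op}}$, and any standard bound of the form $\mathbb{E}\|X_i^N\|_{\text{op}}\leq 4$ suffices.
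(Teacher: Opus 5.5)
Your argument is correct. It supplies the standard reasoning that the paper leaves implicit, since the paper states Fact~\ref{fact2.7} without proof: Gaussian concentration plus an $O(1)$ bound on $\mathbb{E}\|X_i^N\|_{\text{op}}$, then submultiplicativity to bound $\|\mathfrak{p}(X)\|_{\text{op}}$, then the explicit $z$-dependence of $K_z$ and $\mu_q$.

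One small inconsistency: with Lipschitz constant $N^{-1/2}$, concentration only gives $e^{-Nt^2/2}$. Your $e^{-Nt^2}$ does hold, but it comes from the sharper constant $(2N)^{-1/2}$, because a standard complex Gaussian is $(g_1+ig_2)/\sqrt{2}$. Either version suffices for the statement.
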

Meanwhile, we can take the inverse of $K_z$ when $z\neq \mathfrak{p}(0)$:
\begin{fact}\label{fact2.568}
    For all $z\neq a_\emptyset$, we have
    $$
K_z^{-1}=(a_\emptyset-z)^{-1}I_m.
    $$Therefore for $z\neq a_\emptyset$, denote by 
    $$
\mathcal{L}_z'(X)=I_m\otimes I_N+\sum_{\ell=1}^n H_\ell K_z^{-1}\otimes X_\ell,
    $$then for $z\neq a_\emptyset$,
    $$
\sigma_{\text{min}}(\mathcal{L}_z'(X))= |z-a_\emptyset|^{-1}\cdot\sigma_{\text{min}}(\mathcal{L}_z(X)).
    $$
\end{fact}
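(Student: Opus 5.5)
The statement is Fact \ref{fact2.568}, and its proof is a direct computation from the definitions in Section \ref{215365}. I will first establish the formula for $K_z^{-1}$ and then the scaling identity for least singular values.

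For $z\neq a_\emptyset$ the choice \eqref{equalitys} gives $\eta_{q,s}=a_\emptyset-z$ for every chain index. So
\[
K_z=(a_\emptyset-z)\Bigl(P_\star+\sum_{q,s}P_{q,s}\Bigr).
\]
The projections $P_\star,P_{q,s}$ are onto the mutually orthogonal summands of the direct sum decomposition $E=E_\star\oplus\bigoplus_{q,s}E_{q,s}$, so they sum to $I_E=I_m$. Hence $K_z=(a_\emptyset-z)I_m$. Since $a_\emptyset-z\neq0$, it is invertible with $K_z^{-1}=(a_\emptyset-z)^{-1}I_m$.

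Next I factor the linearization. Because $K_z^{-1}$ is a scalar multiple of the identity, it commutes with every $H_\ell$, so it does not matter on which side it is placed. I would write
\[
\mathcal{L}_z(X)=\mathcal{L}_z'(X)\,(K_z\otimes I_N).
\]
To verify this, note that $(I_m\otimes I_N)(K_z\otimes I_N)=K_z\otimes I_N$. Also $(H_\ell K_z^{-1}\otimes X_\ell)(K_z\otimes I_N)=H_\ell\otimes X_\ell$ by the mixed-product rule for Kronecker products. Summing these terms recovers \eqref{equation325}.

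Finally, $K_z\otimes I_N=(a_\emptyset-z)I_{mN}$ is a scalar multiple of the identity. Multiplying a matrix by a scalar $c$ multiplies every singular value by $|c|$. Therefore
\[
\sigma_{\min}(\mathcal{L}_z(X))=|a_\emptyset-z|\,\sigma_{\min}(\mathcal{L}_z'(X)),
\]
which is the claimed identity after dividing by $|a_\emptyset-z|$.

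There is no real obstacle here. The only point that needs care is that the scalar form of $K_z$ depends on the specific choice $\eta_{q,s}=a_\emptyset-z$ in \eqref{equalitys}. With a generic choice of the $\eta_{q,s}$, $K_z$ would be diagonal but not scalar. One would then only get the two-sided bound $\sigma_{\min}(\mathcal{L}_z)\ge\sigma_{\min}(\mathcal{L}_z')\,\sigma_{\min}(K_z)$, together with the corresponding upper bound, rather than an exact identity.
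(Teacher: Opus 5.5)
Your proposal is correct and follows the paper's reasoning: with the choice $\eta_{q,s}=a_\emptyset-z$ the paper observes that $K_z=(a_\emptyset-z)I_E$ is a scalar. Hence $\mathcal{L}_z(X)=(a_\emptyset-z)\,\mathcal{L}_z'(X)$, and all singular values scale by $|a_\emptyset-z|$, exactly as you argue.
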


Then Theorem \ref{theoremonline89} can be deduced from Theorem \ref{theorem1290}:

\begin{proof}[\proofname\ of Theorem \ref{theoremonline89} conditional on Theorem \ref{theorem1290}]
    By Lemma \ref{lemma2.222}, we have 
    $$
\sigma_{\text{min}}(P^N-z)\geq\sigma_{\text{min}}(\mathcal{L}_z(X))
    $$ for the $\mathcal{L}_z(X)$ defined in \eqref{equation325}.
By Fact \ref{fact2.568} we also have, for any $z\neq\mathfrak{p}(0)$,
$$
\sigma_{\text{min}}(\mathcal{L}_z(X))=|z-\mathfrak{p}(0)|\cdot \sigma_{\text{min}}(\mathcal{L}_z'(X))
.$$
The matrix $\mathcal{L}_z'(X)$ is precisely of the form stated in Theorem \ref{theorem1290}, so Theorem \ref{theorem1290} offers a least singular value lower bound for $\mathcal{L}_z'(X)$, and thus for $\mathcal{L}_z(X)$ and then for $P^N-z$. Since each $\mathcal{L}_z'(X)$ has the same dimension, the exponent in $\epsilon$ and $N$ in the least singular value bound can be chosen to be uniform over $z$ (and thus depend only on $\mathfrak{p}$) by Theorem \ref{theorem1290}.
\end{proof}

\section{Least singular value for the general tensor product model}\label{section33}
This section is devoted to the proof of Theorem \ref{theorem1290}, a quantitative least singular value lower bound for the following general tensor product matrices 
\begin{equation}
\mathcal{T}_N= I_m\otimes I_N+\sum_{\ell=1}^n R_\ell\otimes  X_\ell,
\end{equation}where $R_1,\cdots,R_n$ are fixed in $M_m(\mathbb{C})$ and $X_1,\cdots,X_n$ are i.i.d. complex Ginibre matrices. 

\subsection{Filtration spanning
and irreducible modules}\label{section3.11} The first step of the proof is to decompose the space $\mathbb{C}^m$ into invariant subspaces for left $R_1,\cdots,R_n$ action, and then to form irreducible quotients for the action. The decompositions stated here will be used throughout this section.

Consider the algebra $$\mathcal{A}:=\operatorname{alg}(I_m,R_1,\cdots,R_n)\subset M_m(\mathbb{C})$$ and we regard $\mathbb{C}^m$ as a left $\mathcal{A}$-module where $\mathcal{A}$ acts from the left by left multiplication. (Note: in general, the algebra $\mathcal{A}$ does not generate the whole matrix algebra $M_m(\mathbb{C})$, so we need a filtration here.)

We fix an $\mathcal{A}$-invariant composition filtration of $\mathbb{C}^m:$

\begin{equation}\label{filtrationgaps}
0=Z_0\subsetneq Z_1\subsetneq\cdots\subsetneq Z_s=\mathbb{C}_m,
\end{equation} 
where each $Z_a$ is invariant for all left $R_\ell$ actions, and each quotient
\begin{equation}\label{dimsib}
S_{I(b)}:=Z_b/Z_{b-1}
\end{equation}
is a simple left $\mathcal{A}$-module. We use the subscript $I(b),1\leq b\leq s$ to stress that this labels irreducible quotient modules instead of columns.
Then we define $S_{I(1)},\cdots,S_{I(s)}$.

A natural question is how we choose the filtration \eqref{filtrationgaps}, as the choice of composition filtration is in general not unique and not canonical. Indeed, the exact choice does not matter: we will choose an arbitrary composition filtration \eqref{filtrationgaps} and fix it throughout the paper. Although the numerical constants in the least singular value estimate will depend on the choice of filtration, any choice of filtration is enough to complete the proof.

Next, we assign a norm on each $S_{I(b)}$ via the Hilbert space quotient norm. Equivalently, we choose an orthogonal splitting 
$$
\mathbb{C}^m=E_1\oplus\cdots E_s,\quad E_a=Z_a\cap Z_{a-1}^\perp.
$$
Under this orthogonal decomposition, each $R_i$ is placed in a block upper triangular form, and each block map is the following induced map 
\begin{equation}\label{inducedactions}
R_\ell^{(I(b))}:S_{I(b)}\to S_{I(b)}.
\end{equation}Therefore, each $\mathcal{T}_N$ is also of block upper triangular form, where its diagonal blocks are
\begin{equation}\label{label526}
\mathcal{T}_{N,I(b)}= I_{S_I(b)}\otimes I_N+\sum_{\ell=1}^n R_\ell^{(I(b))}\otimes X_\ell,\quad 1\leq b\leq s.
\end{equation}
Therefore, we only need to prove least singular value lower bound on each simple factor, and the overall LSV lower bound can be retrieved via the following triangular reconstruction mechanism:
\begin{fact}\label{fact4570}
For any integer $d$, suppose that $A,C$ are square matrices of size at most $d$ and the block matrix $$
T=\begin{bmatrix}
    A&B\\0&C
\end{bmatrix}
$$acts on an orthonormal Hilbert space direct sum. Assume that 
$$
\sigma_{min}(A)\geq\alpha,\quad \sigma_{min}(C)\geq\gamma,\quad \|T\|\leq M,
$$then 
$$
\sigma_{min}(T)\geq \frac{\alpha\gamma}{3dM}.
$$\end{fact}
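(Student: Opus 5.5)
The plan is to construct an explicit inverse, or equivalently to bound $\|T^{-1}\|$ directly; I will not go through singular vectors. Since $\sigma_{min}(A)\geq\alpha>0$ and $\sigma_{min}(C)\geq\gamma>0$, both $A$ and $C$ are invertible, and so is $T$, with
$$
T^{-1}=\begin{bmatrix} A^{-1} & -A^{-1}BC^{-1}\\ 0 & C^{-1}\end{bmatrix}.
$$
The conclusion then follows from $\sigma_{min}(T)=\|T^{-1}\|_{op}^{-1}$ once I show $\|T^{-1}\|_{op}\leq \frac{3dM}{\alpha\gamma}$.

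For the upper bound I use $\|A^{-1}\|\leq 1/\alpha$ and $\|C^{-1}\|\leq 1/\gamma$. Since $B$ is a block of $T$, $\|B\|\leq\|T\|\leq M$; for the same reason $\|A\|\leq M$ and $\|C\|\leq M$. The norm of a $2\times 2$ block operator is at most the sum of the norms of its blocks, so
$$
\|T^{-1}\|\leq \frac1\alpha+\frac1\gamma+\frac{M}{\alpha\gamma}.
$$
To bring this into the form $3M/(\alpha\gamma)$, I note that $\alpha\leq\sigma_{min}(A)\leq\|A\|\leq M$, which gives $1/\gamma\leq M/(\alpha\gamma)$, and symmetrically $\gamma\leq M$ gives $1/\alpha\leq M/(\alpha\gamma)$. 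Hence $\|T^{-1}\|\leq 3M/(\alpha\gamma)\leq 3dM/(\alpha\gamma)$, since $d\geq1$.

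There is one subtle point. If $\alpha=0$ or $\gamma=0$, the claimed bound $\sigma_{min}(T)\geq0$ is trivial, so I may assume both are positive. The factor $d$ is slack: it appears only if one passes between Hilbert–Schmidt and operator norms, for example if $\|T\|$ is meant in HS norm. In that case I would still use $\|B\|_{op}\leq\|T\|_{HS}$, and the factor $d$ just absorbs any such conversion. I do not expect any genuine obstacle; the only care needed is checking $\alpha,\gamma\leq M$ so that the three terms combine into a single term.
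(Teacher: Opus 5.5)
Your proposal is correct and is essentially the paper's proof. Both write down the block upper-triangular inverse, bound $\|T^{-1}\|$ by $\alpha^{-1}+\gamma^{-1}+M/(\alpha\gamma)$, and combine the three terms into $3M/(\alpha\gamma)$ using $\alpha,\gamma\leq M$. You are also right that the factor $d$ is slack: the paper carries it along but the argument never needs it.
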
\begin{proof}Since the inverse matrix has the form
$$
T^{-1}=\begin{bmatrix}
    A^{-1}&-A^{-1}BC^{-1}\\0&C^{-1}
\end{bmatrix},
$$we get that 
$$
\|T^{-1}\|\leq d (\alpha^{-1}+\gamma^{-1}+\alpha^{-1}M\gamma^{-1})\leq d\frac{\alpha+\gamma+M}{\alpha\gamma}\leq d\frac{3M}{\alpha\gamma},
$$ since we have $\alpha,\gamma\leq M$ by definition.
\end{proof} 

\subsection{Consequence of a simple module}
In this algebraic subsection, we show why passing to irreducible $\mathcal{A}$-modules $Z$ is particularly useful for our goal. The main idea is that if $Z$ is irreducible for the left multiplication of $R_i$, then a subset of $\operatorname{End}(Z)$ will also be irreducible for the left multiplication of dual atoms $R_1^*,\cdots,R_n^*$. This will be very useful when we characterize the structure of a normal vector to column subspaces.

We again use a fully abstract formulation. Let $Z$ be a fixed finite-dimensional Hilbert space with given coefficient maps 
$$R_1,\cdots,R_n\in\operatorname{End}(Z).$$ Denote the linear subspace they generate by
$$
\Sigma_R:=\operatorname{span}\{R_1,\cdots,R_n\}\subset\operatorname{End}(Z)
$$and also denote by $\mathcal{D}_R$ its orthogonal complement in the Hilbert-Schmidt norm:
$$
\mathcal{D}_R:=\Sigma_R^\perp\subset\operatorname{End}(Z).
$$
Then for any nonzero linear subspace
$$0\neq E\subset \mathcal{D}_R,$$ we define the subspace generated by $R_\ell^*$ from $E$ by 
\begin{equation}\label{yyyeee}Y(E):=\operatorname{span}_{\ell=1}^n R_\ell^*E
.\end{equation}
Note that the case $E=\mathcal{D}_R$ is also contained in this definition.

\begin{Proposition}
\label{propositionnoinvariance}
Assume that $Z$ is irreducible for the coefficient family, which means 
$$
\text{there does not exist any }0\neq W\subsetneq Z\text{ such that }R_\ell W\subset W\quad \forall \ell.
$$ Then except in the following degenerate case where $$\Sigma_R=0,\quad \mathcal{D}_R=\operatorname{End}(Z),\quad Y(E)=0,$$ the following holds:
$$
0\neq E\subset \mathcal{D}_R\Rightarrow Y(E)\nsubseteq E.
$$ Moreover, $E$ cannot be a left ideal of $\operatorname{End}(Z)$ and we also have
$$
\dim Y(E)-\dim (E\cap Y(E))\geq1.
$$
\end{Proposition}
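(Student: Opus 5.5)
The plan is to argue by contradiction, using Burnside's theorem to turn irreducibility into the statement that the $R_\ell$ generate all of $\operatorname{End}(Z)$. All three conclusions then follow from the first, $Y(E)\nsubseteq E$.

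\emph{Step 1: the degenerate case.} If $\Sigma_R=0$, then every $R_\ell$ vanishes, so every subspace of $Z$ is invariant. Irreducibility then forces $\dim Z=1$, and we are exactly in the excluded case $\mathcal{D}_R=\operatorname{End}(Z)$, $Y(E)=0$. From now on we assume $\Sigma_R\neq 0$.

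\emph{Step 2: set up the contradiction.} Suppose $0\neq E\subset\mathcal{D}_R$ and $Y(E)\subset E$, that is, $R_\ell^*E\subset E$ for every $\ell$. Iterating, $wB\in E$ for every $B\in E$ and every word $w$ (including the empty word) in $R_1^*,\dots,R_n^*$. Since $E\subset\Sigma_R^\perp$, we get for all $\ell$ and all such $w$
$$0=\langle R_\ell, wB\rangle=\operatorname{Tr}(R_\ell B^*w^*)=\operatorname{Tr}(B^* v R_\ell),$$
where $v=w^*$ runs over all words in $R_1,\dots,R_n$, and the last equality uses cyclicity of the trace. Let $\mathcal{A}_+$ be the linear span of all nonempty words in $R_1,\dots,R_n$; this is exactly the span of the products $vR_\ell$. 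Taking linear combinations gives $\operatorname{Tr}(B^*M)=0$ for all $M\in\mathcal{A}_+$.

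\emph{Step 3: $\mathcal{A}_+=\operatorname{End}(Z)$.} This is the main step. Since $Z$ is a simple module over $\mathcal{A}=\operatorname{alg}(I,R_1,\dots,R_n)$ and $\mathbb{C}$ is algebraically closed, Burnside's theorem gives $\mathcal{A}=\operatorname{End}(Z)$. The space $\mathcal{A}_+$ is closed under left and right multiplication by each $R_\ell$, and trivially by $I$. It is therefore a two-sided ideal of $\mathcal{A}=\operatorname{End}(Z)$. It is nonzero because it contains $\Sigma_R\neq0$. The full matrix algebra is simple, so $\mathcal{A}_+=\operatorname{End}(Z)$.

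\emph{Step 4: conclusions.} Combining Steps 2 and 3, $\operatorname{Tr}(B^*M)=0$ for all $M\in\operatorname{End}(Z)$, hence $B=0$ for every $B\in E$. This contradicts $E\neq0$, so $Y(E)\nsubseteq E$. If $E$ were a left ideal of $\operatorname{End}(Z)$, then in particular $R_\ell^*E\subset E$ for all $\ell$, i.e. $Y(E)\subset E$, which is now excluded. Finally, $Y(E)\nsubseteq E$ means $E\cap Y(E)$ is a proper subspace of $Y(E)$, so $\dim Y(E)-\dim(E\cap Y(E))\geq1$.
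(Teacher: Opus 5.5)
Your proof is correct, and it differs from the paper's in how the contradiction is extracted once Burnside's theorem is available.

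The paper's route is as follows:
\begin{itemize}
\item It applies Burnside's theorem and takes adjoints, so an $R^*$-invariant $E$ becomes invariant under left multiplication by all of $\operatorname{End}(Z)$, i.e.\ a left ideal.
\item It then uses the classification $E=\operatorname{End}(Z)P$, proved separately as an appendix lemma.
\item From $E\perp\Sigma_R$ it derives $R_\ell P=0$.
\item It invokes irreducibility a second time: $\operatorname{Ran}P$ is a common invariant subspace, which forces $P=I$ and hence the degenerate case $R_\ell=0$.
\end{itemize}

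You instead argue in dual form. Trace cyclicity shows directly that any subspace of $\Sigma_R^\perp$ invariant under left multiplication by the $R_\ell^*$ is orthogonal to the non-unital algebra $\mathcal{A}_+$ spanned by nonempty words in $R_1,\dots,R_n$. That step needs no irreducibility at all; in fact $\mathcal{A}_+^\perp$ is the largest such invariant subspace in general. Irreducibility then enters only once, through Burnside's theorem together with simplicity of $\operatorname{End}(Z)$, to show that the nonzero two-sided ideal $\mathcal{A}_+$ is everything.

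What each approach buys:
\begin{itemize}
\item Your argument avoids the left-ideal structure lemma and isolates exactly where irreducibility is used. The ``left ideal'' clause then becomes an immediate corollary.
\item The paper's argument fits its module-theoretic narrative. It makes the left-ideal obstruction the central step, which is the form later recalled when proving the quantitative version, Proposition~\ref{proposition1044}.
\end{itemize}
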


\begin{proof}
    Suppose, for contradiction, that we have 
    $$
Y(E)\subset E.
    $$Then this means 
    $$
R_\ell^*E\subset E,\quad \forall \ell.
    $$Since $Z$ is irreducible for the coefficient family, Burnside's theorem implies that 
    $$
\operatorname{alg}(I_,R_1,\cdots,R_n)=\operatorname{End}(Z).
    $$Taking adjoints of each element, we get that 
    $$
\operatorname{alg}(I_,R_1^*,\cdots,R_n^*)=\operatorname{End}(Z).$$
Then $E$ is invariant under the left action by any element of $\operatorname{End}(Z)$. In other words, $E$ is a nonzero left ideal of $
\operatorname{End}(Z)$.

We know by a standard fact (see Lemma \ref{howdoweuse681}) that each nonzero left ideal has the form 
$$
E=\operatorname{End}(Z)P
$$ for some nonzero projection onto a subspace $\operatorname{Ran} P\neq 0.$
But since 
$$
E\subset \mathcal{D}_R=\Sigma_R^\perp,
$$we get that 
$$
\langle AP,R_\ell\rangle_{\text{HS}}=0\quad\forall A\in\operatorname{End}(Z),\forall\ell.
$$Equivalently, we have
$$ \operatorname{Tr}((AP)^*R_\ell)=0\quad\forall A,  $$
which gives 
$$
\operatorname{Tr}(A^*R_\ell P)=0\quad\forall A.
$$Since $A$ is arbitrary, this means that 
$$R_\ell P=0\quad \forall \ell.
$$Therefore we get 
$$
R_\ell\cdot \operatorname{Ran}(P)=0\quad\forall\ell.
$$Then $\operatorname{Ran }(P)$ is a common invariant subspace for the $R_\ell$'s. If $0\neq \operatorname{Ran}(P)\subsetneq Z$, this is a contradiction to irreducibility.

Finally, if $\operatorname{Ran}(P)=Z$, then $P=I$ and $R_\ell=R_\ell P=0\quad \forall \ell.$ This is the excluded case.
\end{proof}

\subsection{Least singular value for dual operators with a dimensional surplus}We introduce a crucial least singular value estimate which is a consequence of Proposition \ref{propositionnoinvariance}.

 Let $Z$ be an irreducible left $\mathcal{A}$-module, and recall that $\mathcal{D}_R\subset \operatorname{End}(Z)$ is the orthogonal subspace to $\Sigma_R=\operatorname{span}(R_1,\cdots,R_n)$. Let $a$ be any index where $1\leq a\leq N$, and define the following random operator 
$$
\mathfrak{D}_a:\mathcal{D}_R^N\to\operatorname{End}(Z)^{N-1}
$$via 
\begin{equation}
    (\mathfrak{D}_aB)_k=B_k+\sum_{i,\ell}\overline{X_\ell(i,k)}R_\ell^*B_i,\quad k\neq a.
\end{equation} Here we simply use $R_\ell$ to denote the induced action of $R_\ell$ on $Z$ (see \eqref{inducedactions} for the more precise notation of induced action, but we omit the induced subscript $I(b)$ here for simplicity), and $R_\ell^*$ is simply the adjoint matrix of this induced action $R_\ell$.

Indeed, by definition the law of $\mathfrak{D}_a$ does not depend on the choice of $a$. The mapping $\mathfrak{D}_a$ arises as we consider the normal vector to the span of all columns of $\mathcal{T}_{N,I(s)}$ with the $a$-th column removed (for any block $s$), and the input vector $B$ ranges from all normal vectors taking value in $\mathcal{D}_R$. We have the following lower bound for the least singular value of $\mathfrak{D}_a$:
\begin{theorem}\label{theorem3.4}
For any such simple $\mathcal{A}$-module $Z$ where $R_1,\cdots,R_n$ do not act trivially and $\mathcal{D}_R\neq 0$, we can find an integer $D_Z>0$ depending only on $\dim_\mathbb{C}Z$  such that whenever $N\geq N_0(Z,R_1,\cdots,R_n)$ (where $N_0(Z,R_1,\cdots,R_n)$ is a fixed constant depending only on parameters of the module), we have
    $$\mathbb{P}(
\sigma_{\text{min}}(\mathfrak{D}_a)\leq N^{-D_Z})\leq\exp(-\Omega(N)).
    $$The multiplicative constant in $\Omega(N)$ depends only on $n$ and not on $R_1,\cdots,R_n$.
\end{theorem}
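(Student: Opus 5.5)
We will prove that, with probability $1-\exp(-\Omega(N))$, every unit $B=(B_1,\dots,B_N)\in\mathcal{D}_R^N$ satisfies $\|\mathfrak{D}_aB\|\geq N^{-D_Z}$. The argument is a net argument in which each unit vector is paired with its own subspace $E$. The Gaussian entries $X_\ell(i,k)$ give a dimension $\dim Y(E)$ of anti-concentration in every equation $k\neq a$. The entropy, however, is governed only by $B$ living (approximately) in $E\cap Y(E)$. Proposition \ref{propositionnoinvariance} supplies the surplus $\dim Y(E)-\dim(E\cap Y(E))\geq 1$, and this surplus is what beats the net.

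\emph{Step 1 (multiscale reduction to a subspace).} Fix a unit $B$ and write $d=\dim\mathcal{D}_R\le(\dim Z)^2$. Consider the $d\times d$ Gram matrix $G=\sum_iB_iB_i^*$ acting on $\mathcal{D}_R$; it has trace $1$. Its eigenvalues can be split at a gap: choose a scale $\delta=N^{-C_j}$, $j\le d$, such that the eigenvalues of $G$ avoid the window $(\delta^{K},\delta)$, where $K$ is large and the finitely many scales are indexed by $j$. This is a pigeonhole over $d+1$ dyadic-polynomial windows. Let $E$ be the span of the eigenvectors with eigenvalue $\geq\delta$. Then $E\neq0$ because $\operatorname{tr}G=1$, and every $B_i$ lies within $\delta^{K/2}\sqrt N$ of $E$ in aggregate. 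Replace $E$ by a point of a $\delta^{K}$-net of the Grassmannian of $\mathcal{D}_R$; this costs only $N^{O(1)}$ choices. Since $R_\ell$ do not act trivially, the degenerate case of Proposition \ref{propositionnoinvariance} is excluded. Hence $W:=Y(E)\cap E^\perp$ (more robustly, the part of $Y(E)$ orthogonal to $E$) is nonzero. By compactness of the Grassmannian and continuity, it even has a quantitative lower bound: some $R_\ell^*e$ with $e\in E$ unit has component $\geq c(R)>0$ orthogonal to $E$. This compactness argument is where $N_0(Z,R)$ and the $R$-dependence enter.

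\emph{Step 2 (anti-concentration in each row).} Project $(\mathfrak{D}_aB)_k$ onto $E^\perp$. Since $B_k$ is nearly in $E$, up to error this projection equals $P_{E^\perp}\sum_{i,\ell}\overline{X_\ell(i,k)}R_\ell^*B_i$. For fixed $B$ the different $k$ use disjoint independent entries (columns $k$ of each $X_\ell$). The projection is a complex Gaussian vector in $P_{E^\perp}Y(E)$. Its covariance equals $\frac1N\sum_{i,\ell}P_{E^\perp}R_\ell^*B_iB_i^*R_\ell P_{E^\perp}$ and has at least one eigenvalue $\gtrsim c(R)\delta/N$ in direction $W$, because $G\succeq\delta P_E$. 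Hence each row $k$ has small-ball probability $\le(\epsilon\sqrt{N}/\sqrt{c\delta})^{2}$ at radius $\epsilon$. Tensorizing over the $N-1$ rows by independence, the probability that $\|\mathfrak{D}_aB\|\le\epsilon\sqrt N$ is at most $(C\epsilon\sqrt{N/\delta})^{2(N-1)}\cdot e^{O(N)}$; the factor $e^{O(N)}$ comes from a Chebyshev/Markov counting of how many rows can be small.

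\emph{Step 3 (net and union bound; the main obstacle).} For fixed $E$, approximate $B$ by an $\epsilon$-net of $\{B: B_i\in E\}$. Each $B_i$ ranges over the ball of $E$, so a naive net has cardinality $(C/\epsilon)^{2N\dim E}$, which is too big. The key point is that only the component of $B_i$ seen by the equations matters, and on the event $\|\mathfrak{D}_aB\|$ small the equations force $B_k\approx -P_E\sum\overline{X}R^*B_i\in E\cap Y(E)$ (up to $P_E$). So we first condition on the event of operator norm $\le 8$ and build the net on $(E\cap Y(E))^N$. The cardinality is then $(C/\epsilon)^{2N\dim(E\cap Y(E))}$, while the anti-concentration available lives in $Y(E)$. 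The surplus of one complex dimension per row from Proposition \ref{propositionnoinvariance} yields a probability bound $(C\epsilon N^{C'}/\delta)^{2N}$ in total, which is $\exp(-\Omega(N))$ once $\epsilon=N^{-D_Z}$ with $D_Z$ chosen large relative to the $C_j$. I expect the main difficulty to be making this rigorous. The projection of $B$ onto $E\cap Y(E)$ versus $E$ must be controlled simultaneously with the near-degenerate directions, and the anti-concentration must be done in the quotient $Y(E)/(E\cap Y(E))$ rather than in $E^\perp$. To handle this, I would iterate the gap selection of Step 1 jointly for $E$ and for $E\cap Y(E)$ over finitely many scales, at most $d^2$ of them. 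Each iteration costs a polynomial factor, and the number of iterations, hence $D_Z$, depends only on $\dim Z$. The constant in $\Omega(N)$ depends only on $n$ because it comes from the Ginibre norm bound (Fact \ref{fact2.7}) and the per-row Gaussian tensorization.
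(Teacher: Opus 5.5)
The gap is in Step 3, which carries the whole theorem. Step 1 is fine: it is the paper's reduction to $\mathfrak{Reg}(d)$ in pigeonhole form, and your $c(R)$ is exactly the frame bound \eqref{frameequations2}. Step 2 is correct but gives only one complex dimension of anti-concentration per row, against a per-row net of dimension $\dim E$.

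In Step 3 you count a net on $(E\cap Y(E))^N$ against anti-concentration in all of $Y(E)$. Neither holds uniformly over $E$.
\begin{itemize}
\item On the event $\|\mathfrak{D}_aB\|\le\epsilon$ you only learn that $B_k\in E$ lies within $\epsilon$ of $Y(E)$. When $E$ and $Y(E)$ have small nonzero principal angles, this does not put $B_k$ near $E\cap Y(E)$. In particular, $-P_E\sum\overline{X}R^*B_i$ is not an element of $E\cap Y(E)$.
\item With $\mathcal{G}_k=\sum_{i,\ell}\overline{X_\ell(i,k)}R_\ell^*B_i$, you have $\operatorname{Cov}(\mathcal{G}_k)\succeq(\delta/N)\Psi_E\Psi_E^*$. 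This is bounded below on $Y(E)$ only by $\delta s_+(\Psi_E)^2/N$, where $s_+(\Psi_E)$ is the smallest nonzero singular value of $\Psi_E$, and $s_+(\Psi_E)$ has no lower bound over the Grassmannian.
\end{itemize}
So the exact inequality $\dim Y(E)\ge\dim(E\cap Y(E))+1$ of Proposition \ref{propositionnoinvariance} cannot be plugged directly into a union bound.

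What is missing is the robust version of this count, which is what the paper supplies.
\begin{itemize}
\item Replace $E\cap Y(E)$ by the fixed-angle near-intersection $C_\tau(E,Y)$ and use the tube covering Lemma \ref{theorem4.4}. Vectors of $E$ within $\rho$ of $Y$ lie in $C_\tau$ plus a $\rho/\tau$-ball, which costs only a polynomial-in-$N$ factor per row.
\item Replace $Y(E)$ by the active image $Y_\sigma(E)$ and treat $\Psi_E^{<\sigma}$ as an error. This forces a gap dichotomy over polynomially many scales of the singular values of $\Psi_E$ (Fact \ref{fact4.80}, Proposition \ref{proposition1124news}), so that the discarded part is smaller than the threshold at which the net argument at that scale succeeds.
\item Prove the surplus $\dim Y_\sigma(E)\ge\dim C_{\tau,\sigma}(E)+1$ for all $\sigma\le\sigma_0$, $\tau\le\tau_0$, uniformly in $E$ (Proposition \ref{proposition1044}). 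This is exactly where your $c(R)$ belongs. Equal dimensions would give, via Lemma \ref{lemma4.20}, $\|P_{E^\perp}R_\ell^*P_E\|\le M_{\text{op}}\tau/\sqrt{1-\tau^2}+\sigma$ for every $\ell$, contradicting $c(R)$ for small $\sigma,\tau$.
\end{itemize}

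Your suggested remedy points the wrong way. Anti-concentration in the quotient $Y(E)/(E\cap Y(E))\cong P_{E^\perp}Y(E)$ is just Step 2 again. It yields only the surplus dimensions per row, which cannot pay for a per-row net of dimension $\dim(E\cap Y(E))$. The small-ball bound must be applied to $B_k'+\mathcal{G}_k$ at fixed net points in all of $Y_\sigma(E)$. Also, $B_a$ is not constrained by any equation and needs its own full net in $E$; this is harmless since it is a single row.
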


Here $R_1,\cdots,R_n$ act trivially means that they act by zero on $Z$ and in particular $Z$ is one dimensional. Except this degenerate case,
we always have $\dim \Sigma_R>0$ so that $\dim \mathcal{D}_R<\dim \operatorname{End}(Z)$. Thus the output dimension of $\mathfrak{D}_a$ is larger than the input dimension when $N$ is large enough, and therefore a good least singular value lower bound is expected to hold for $\mathfrak{D}_a$. The proof of Theorem \ref{theorem3.4} is deferred to Section \ref{secitons4}. The proof is highly technical and is one of the main contributions of this paper. We now complete the rest assuming Theorem \ref{theorem3.4} holds.

\subsection{Proving square LSV for a single factor via exposed columns}

To bound the least singular value of $\mathcal{T}_N$ which is a square matrix, we will use the following negative second moment identity. The proof is presented in Appendix \ref{AppendixA}.

\begin{lemma}\label{blocksecondidentity}
    Let $Z$ be a Hilbert space, and we write $Z^N=E_1\oplus\cdots\oplus E_N$ where each $E_i$ is a copy of $Z$. Let $T:Z^N\to Z^N$ be an injective linear map. Denote the block columns as $T_a:=T|_{E_a}:E_a\to Z^N$. For each $a\in[N]$, denote by 
    $$
E_{-a}:=\oplus_{b\neq a}E_b,\quad T_{-a}=T_{E_{-a}}.
    $$Also denote by $P_a:Z^N\to \operatorname{Ran}(T_{-a})^\perp=\ker T_{-a}^*$ the orthogonal projection. Then we define the exposed block maps 
    $$
\mathfrak{Exp}_a:=P_aT_a:E_a\to\operatorname{Ran}(T_{-a})^\perp.
    $$Then we have, denoting by $\sigma_1(\cdot),\sigma_2(\cdot),\cdots$ the respective singular values of $T$ and $\mathfrak{Exp}_a$,
    $$
\sum_{s=1}^{N\dim Z }\sigma_s(T)^{-2}=\sum_{a=1}^N\sum_{r=1}^{\dim Z}\sigma_r(\mathfrak{Exp}_a)^{-2}.
    $$
\end{lemma}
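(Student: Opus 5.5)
This is the block negative second moment identity, and I would prove it by the same route as the scalar identity $\sum_j\sigma_j(T)^{-2}=\sum_j \operatorname{dist}(C_j,C_{-j})^{-2}$, but with each single column replaced by a block column.

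\textbf{Step 1: reduce to Hilbert--Schmidt norms.} The left-hand side equals $\|T^{-1}\|_{\text{HS}}^2$. Here $T$ is injective on the finite-dimensional space $Z^N$, so it is invertible. Decompose by the block rows of $T^{-1}$. Let $\Pi_a:Z^N\to E_a$ be the coordinate projection. Then
$$\|T^{-1}\|_{\text{HS}}^2=\sum_{a=1}^N\|\Pi_aT^{-1}\|_{\text{HS}}^2,$$
and $\|\Pi_aT^{-1}\|_{\text{HS}}^2=\|(\Pi_aT^{-1})^*\|_{\text{HS}}^2$. It therefore suffices to show, for each $a$,
$$\|\Pi_aT^{-1}\|_{\text{HS}}^2=\sum_r\sigma_r(\mathfrak{Exp}_a)^{-2}=\|\mathfrak{Exp}_a^{-1}\|_{\text{HS}}^2.$$

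\textbf{Step 2: $\mathfrak{Exp}_a$ is invertible.} Since $T$ is invertible, $T_{-a}$ is injective, so $\dim\operatorname{Ran}(T_{-a})^\perp=\dim Z=\dim E_a$. Moreover $\mathfrak{Exp}_a$ is injective: if $P_aT_ax=0$, then $T_ax\in\operatorname{Ran}T_{-a}$, and injectivity of $T$ forces $x=0$. Hence $\mathfrak{Exp}_a$ is a bijection $E_a\to\operatorname{Ran}(T_{-a})^\perp$, and its singular values are exactly the $\dim Z$ numbers appearing on the right-hand side.

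\textbf{Step 3: identify $\Pi_aT^{-1}$ with $\mathfrak{Exp}_a^{-1}P_a$.} Write any $y\in Z^N$ as $y=Tx=T_ax_a+T_{-a}x_{-a}$. Applying $P_a$ kills the $T_{-a}$ term, so $P_ay=\mathfrak{Exp}_a x_a$, that is,
$$\Pi_aT^{-1}y=x_a=\mathfrak{Exp}_a^{-1}P_ay.$$
Thus $\Pi_aT^{-1}=\mathfrak{Exp}_a^{-1}P_a$. Since $P_a$ is an orthogonal projection onto $\operatorname{Ran}(T_{-a})^\perp$, which is exactly the domain of $\mathfrak{Exp}_a^{-1}$, we get
$$\|\mathfrak{Exp}_a^{-1}P_a\|_{\text{HS}}=\|\mathfrak{Exp}_a^{-1}\|_{\text{HS}}.$$
One way to see this is to compute the Hilbert--Schmidt norm in an orthonormal basis adapted to $\operatorname{Ran}(T_{-a})^\perp\oplus\operatorname{Ran}(T_{-a})$: the operator vanishes on the second summand and agrees with $\mathfrak{Exp}_a^{-1}$ on the first. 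Summing over $a$ completes the proof.

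There is no serious obstacle. The only point needing care is Step 3, namely the identity $\Pi_aT^{-1}=\mathfrak{Exp}_a^{-1}P_a$ together with the dimension count in Step 2 that makes $\mathfrak{Exp}_a$ a square invertible map.
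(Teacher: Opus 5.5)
Your proof is correct, but it takes a different route from the paper's. The two routes are closely related.

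\textbf{The paper's route.} It works with the Gram operator $G=T^*T$ in the block decomposition $E_a\oplus E_{-a}$. It checks that $\mathfrak{Exp}_a^*\mathfrak{Exp}_a=T_a^*P_aT_a=A-BC^{-1}B^*$ is the Schur complement of $C=T_{-a}^*T_{-a}$. The Schur block-inverse formula then shows that the $(a,a)$ diagonal block of $G^{-1}$ is $(\mathfrak{Exp}_a^*\mathfrak{Exp}_a)^{-1}$, and taking traces and summing over $a$ finishes.

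\textbf{Your route.} You work directly with $T^{-1}$, splitting $\|T^{-1}\|_{\text{HS}}^2$ over block rows. You prove the operator identity $\Pi_aT^{-1}=\mathfrak{Exp}_a^{-1}P_a$ by applying $P_a$ to $y=T_ax_a+T_{-a}x_{-a}$.

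\textbf{Why they agree.} Your identity gives
\[
(G^{-1})_{aa}=(\Pi_aT^{-1})(\Pi_aT^{-1})^*=\mathfrak{Exp}_a^{-1}(\mathfrak{Exp}_a^{-1})^*=(\mathfrak{Exp}_a^*\mathfrak{Exp}_a)^{-1},
\]
which is exactly the paper's Schur complement statement.

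\textbf{What each buys.} Your version avoids inverting $T_{-a}^*T_{-a}$ and the block-inverse formula. It also makes explicit two points the paper leaves implicit: $\operatorname{Ran}(T_{-a})^\perp$ has dimension exactly $\dim Z$, and $\mathfrak{Exp}_a$ is a bijection onto it. So the right-hand side really consists of $\dim Z$ positive singular values per block. As a by-product you get the blockwise statement $\|\Pi_aT^{-1}\|_{\text{op}}=\sigma_{\min}(\mathfrak{Exp}_a)^{-1}$. The paper's version stays at the level of the Gram matrix and standard Schur-complement algebra.

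Both arguments rely on the decomposition $Z^N=E_1\oplus\cdots\oplus E_N$ being orthogonal. You use it in $\sum_a\Pi_a^*\Pi_a=I$; the paper uses it in splitting $\operatorname{Tr}(G^{-1})$ into diagonal blocks.
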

We now apply Lemma \ref{blocksecondidentity} to the matrix $\mathcal{T}_N$. We fix an exposed block column $a$ (which means all the columns indexed by $\{1,\cdots,m\}\otimes e_a$ where $e_a$ is the $a$-th column for an $N$-column matrix). After removing the $a$-th block column, denote by $\mathcal{T}_{-a}$ the resulting matrix. Denote by 
$$
W_a:=\ker (\mathcal{T}_{-a}^*)\subset Z^N.
$$Then almost surely, $\dim W_a=\dim Z.$ Indeed, the following fact is proven in Appendix \ref{AppendixA}:
\begin{fact}\label{facts3.33}
    Almost surely, $\dim W_a=\dim Z$ and almost surely, $\mathcal{T}_N$ is invertible.
\end{fact}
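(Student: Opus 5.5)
The plan is to use that the entries of $\mathcal{T}_N$ (here the single irreducible block $\mathcal{T}_{N,I(b)}=I_Z\otimes I_N+\sum_\ell R_\ell\otimes X_\ell$, acting on $Z^N$) are polynomial functions of the Gaussian entries of $X_1,\dots,X_n$. Both claims in Fact \ref{facts3.33} then become statements that certain polynomials do not vanish identically. A nonzero polynomial in the real and imaginary parts of jointly absolutely continuous Gaussian variables vanishes only on a Lebesgue-null set. So it suffices to exhibit one deterministic realization of $(X_1,\dots,X_n)$ at which the relevant polynomial is nonzero.

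For invertibility I would take the polynomial $\det\mathcal{T}_N$, viewed as a function of the entries of $X_1,\dots,X_n$. At $X_1=\cdots=X_n=0$ we get $\mathcal{T}_N=I_Z\otimes I_N$, whose determinant is $1$. Hence $\det\mathcal{T}_N$ is a nonzero polynomial and vanishes with probability zero, so $\mathcal{T}_N$ is almost surely invertible.

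For the kernel dimension, $\mathcal{T}_{-a}$ is the $N\dim Z\times (N-1)\dim Z$ matrix obtained by deleting the $a$-th block column. Its adjoint $\mathcal{T}_{-a}^*$ maps $Z^N$ to $Z^{N-1}$, so by rank–nullity
\[
\dim W_a=N\dim Z-\operatorname{rank}\mathcal{T}_{-a}\geq \dim Z,
\]
with equality if and only if $\mathcal{T}_{-a}$ has full column rank. On the event that $\mathcal{T}_N$ is invertible, its columns are linearly independent, so any subcollection of them, in particular the columns of $\mathcal{T}_{-a}$, is also independent. Therefore $\operatorname{rank}\mathcal{T}_{-a}=(N-1)\dim Z$ and $\dim W_a=\dim Z$ on this full-probability event, simultaneously for all $a\in[N]$ since the event does not depend on $a$.

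The only point needing care is the null-set claim. I would justify it by the standard induction on the number of variables, via Fubini, that the zero set of a nonzero real polynomial on $\mathbb{R}^k$ has Lebesgue measure zero. The Ginibre entries have a joint density on $\mathbb{R}^{2nN^2}$, so their law is absolutely continuous with respect to Lebesgue measure and also assigns that zero set probability zero. The polynomial here is $\det\mathcal{T}_N$, a complex polynomial in the entries, regarded as a real polynomial in their real and imaginary parts; it vanishes exactly where its real and imaginary parts both vanish, and at least one of them is a nonzero real polynomial.
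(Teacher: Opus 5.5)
Your proof is correct and follows essentially the paper's approach. Both arguments reduce to a determinant that is a polynomial in the Gaussian entries, equals $1$ at $X_1=\cdots=X_n=0$, and therefore vanishes with probability zero. The only difference is the order: the paper exhibits a nonvanishing $(N-1)\dim Z$ principal minor of $\mathcal{T}_{-a}$ directly and then notes that invertibility of $\mathcal{T}_N$ follows the same way, whereas you prove $\det\mathcal{T}_N\not\equiv 0$ first and deduce full column rank of every $\mathcal{T}_{-a}$ at once, which is slightly more economical.
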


The $a$-th exposed column of $\mathcal{T}_N$ has the following form 
$$
C_a(\xi):Z\to Z^N,
$$
$$
C_a(\xi)u=e_au+\sum_{i=1}^N\sum_{\ell=1}^n\xi_i^\ell e_iR_\ell u,
$$where $e_1,\cdots,e_N$ are standard basis vectors and where $\xi_i^\ell$ are i.i.d. copies of complex Gaussian random variables with variance $\frac{1}{N}$. Then we project onto $W_a$ and denote by 
$$
D_a(\xi):=P_{W_a}C_a(\xi):Z\to W_a.
$$

Our goal is to prove that, with very high probability over the non-exposed columns in $\mathcal{T}_{-a}$, we can find some fixed constant $B_Z>0$ such that
\begin{equation}\label{testvectors}
\|\det D_a(\xi)\|_{L_\xi^2}\geq N^{-B_Z}.
\end{equation}
This is because a lower bound on $\det D_a(\xi)$ can easily yield a lower bound on $\sigma_{\text{min}}(D_a(\xi))$ which we then feed in Lemma \ref{blocksecondidentity}, since it has a fixed size and bounded operator norm.

\subsection{Exposed coefficient frame lower bound}To prove \eqref{testvectors}, we regard $D_a(\xi)$ as a Gaussian vector (conditional on $\mathcal{T}_{-a}$) and compute its variance by testing it against other test vectors. We take any unit test vector 
$$A:Z\to W_a\subset Z^N.
$$We write the coordinates of $A$ in terms of the coordinate structure on $Z^N$ as
$$
A=(A_i)_{i=1}^N,\quad A_i\in\operatorname{End}(Z).
$$Then we can write the inner product as 
$$
\langle A,D_a(\xi)\rangle=\langle A_a,I_Z\rangle+\sum_{i,\ell}\xi_i^\ell\langle A_i,R_\ell\rangle.
$$Then $\mathbb{E}[\langle A,D_a(\xi)\rangle]=\langle A_a,I_Z\rangle.$

At this point, we divide into two different situations:
\begin{enumerate}
    \item Either on $Z$, all the actions $R_\ell$ act by 0. Then $Z$ is simply one dimensional and the matrix $\mathcal{T}_N$ is simply the $N$-dimensional identity matrix. Then its least singular value lower bound is immediate.
    \item The left actions $R_\ell$ do not act by 0 on $Z$. We will then use anti-concentration of the Gaussian vectors. The rest of the proof will focus only on this situation.
\end{enumerate}

We prove the following characterization of the structure of $A$:

\begin{Proposition}\label{proposition}
There exist some $B_Z>0$ depending only on $\dim Z$ (where $Z$ is an irreducible factor), and a constant $N_0(R)\in\mathbb{N}_+$ depending on the nontrivial $R_1,\cdots,R_n$ action, such that whenever $N\geq N_0$, with probability at least $1-\exp(-\Omega(N))$ over the non-exposed columns in $\mathcal{T}_{-a}$, 
we have
\begin{equation}\label{equationest702}
\frac{1}{N}\sum_{i=1}^N\sum_{\ell=1}^n |\langle A_i,R_\ell\rangle|^2\geq N^{-B_Z},
\end{equation}
for any unit vector $A:Z\to W_a$. Here, the coefficient in $\Omega(N)$ does not depend on $R$.
\end{Proposition}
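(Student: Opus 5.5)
We argue by contradiction and reduce everything to Theorem \ref{theorem3.4}. First we write down what it means for $A$ to lie in $W_a=\ker(\mathcal{T}_{-a}^*)$. Column $k\neq a$ of $\mathcal{T}_N$ acts by $u\mapsto e_k u+\sum_{i,\ell} X_\ell(i,k) e_i R_\ell u$. Hence $A\perp \operatorname{Ran}(\mathcal{T}_{-a})$ means that for every $k\neq a$,
\begin{equation*}
A_k+\sum_{i,\ell}\overline{X_\ell(i,k)}\,R_\ell^*A_i=0 ,
\end{equation*}
with $A_i\in\operatorname{End}(Z)$ (the adjoint of left multiplication by $R_\ell$ is left multiplication by $R_\ell^*$). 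This is exactly the equation $(\mathfrak{D}_aA)_k=0$, except that $\mathfrak{D}_a$ is only defined on inputs with entries in $\mathcal{D}_R$.

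We split each block as $A_i=A_i^{\Sigma}+B_i$, where $A_i^\Sigma$ is the Hilbert--Schmidt projection onto $\Sigma_R=\operatorname{span}(R_\ell)$ and $B_i\in\mathcal{D}_R$. We may assume the $R_\ell$ are not all zero on $Z$ (case (2)), so $\Sigma_R\neq 0$. If $\mathcal{D}_R=0$, then $A_i=A_i^\Sigma$ and the claim is immediate. Otherwise Theorem \ref{theorem3.4} applies.

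Let $G$ be the Gram matrix of $R_1,\dots,R_n$ restricted to a basis of $\Sigma_R$. Then $G$ is a fixed nondegenerate matrix depending only on $R$, so
\[
\sum_\ell|\langle A_i,R_\ell\rangle|^2\geq c_R\|A_i^\Sigma\|_{HS}^2 .
\]
Thus \eqref{equationest702} fails only if $\sum_i\|A_i^\Sigma\|^2\leq C_R N^{1-B_Z}$.

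Now we plug the splitting into the kernel equation:
\[
(\mathfrak{D}_aB)_k=-\Bigl(A_k^\Sigma+\sum_{i,\ell}\overline{X_\ell(i,k)}R_\ell^*A_i^\Sigma\Bigr).
\]
We work on the event of Fact \ref{fact2.7}, where $\|X_\ell\|_{op}\leq 8$. On this event the right-hand side has norm at most $C\|A^\Sigma\|\leq C' N^{(1-B_Z)/2}$, where the constant also absorbs $\|R_\ell\|$.

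Since $A$ is a unit vector, $\|B\|^2=1-\|A^\Sigma\|^2\geq 1/2$ once $B_Z>1$ and $N\geq N_0(R)$. Theorem \ref{theorem3.4} gives, with probability $1-\exp(-\Omega(N))$ and with $\Omega$ independent of $R$, the bound $\|\mathfrak{D}_aB\|\geq N^{-D_Z}\|B\|\geq N^{-D_Z}/2$. Combining the two estimates gives
\[
N^{-D_Z}/2\leq C'N^{(1-B_Z)/2} .
\]
Choosing $B_Z=2D_Z+2$ makes this false for $N\geq N_0(R)$, which is a contradiction. So \eqref{equationest702} holds simultaneously for all unit $A$ on the intersection of the two events. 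No net over $A$ is needed: Theorem \ref{theorem3.4} is a uniform bound on the operator $\mathfrak{D}_a$, and the failure probabilities are $e^{-c_nN}$ and $\exp(-\Omega(N))$.

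\emph{Main obstacle.} The only real difficulty is Theorem \ref{theorem3.4} itself, which we take as given. Beyond that, the points requiring care are bookkeeping. We must check that the adjoint of the exposed-column structure produces exactly $\overline{X_\ell(i,k)}R_\ell^*$, with the correct conjugation and index order; it does, because the $(i,k)$ block of $\mathcal{T}$ is $\delta_{ik}I+\sum_\ell X_\ell(i,k)R_\ell$. We must also check that $B_Z$ depends only on $\dim Z$, which holds because $D_Z$ does. All $R$-dependence is pushed into $N_0(R)$.
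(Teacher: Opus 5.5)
Your proposal is correct and follows essentially the same argument as the paper: argue by contradiction, split each $A_i$ into its $\Sigma_R$ and $\mathcal{D}_R$ components, use nondegeneracy of the pairing with the $R_\ell$ on $\Sigma_R$ together with the operator-norm event to make $\mathfrak{D}_aB$ polynomially small, and contradict Theorem \ref{theorem3.4}. The only cosmetic difference is that you bound the full $\ell^2$ norm of the error vector directly rather than coordinate by coordinate, which lets you take $B_Z=2D_Z+2$ instead of the paper's $2D_Z+3$.
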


\begin{proof} 
We let the value of $B_Z$ be determined at the end of the proof. 
Suppose that there exists a unit vector $A:Z\to W_a$ such that the lower bound \eqref{equationest702} does not hold. Since 
$$A(Z)\subset W_a=\operatorname{ker}(\mathcal{T}_{-a}^*), 
$$the block components of $A$ should satisfy the following adjoint equations:
\begin{equation}\label{adjointequation}A_k+\sum_{i,\ell}\overline{X_\ell(i,k)}R_\ell^*A_i=0
,\quad k\neq a.
\end{equation}

Recall that we defined the fixed linear subspace 
$$
\Sigma_{R}=\operatorname{span}\{R_1,\cdots,R_n\}\subset\operatorname{End}(Z).
$$
Since $\Sigma_{R}$ is fixed (independent of $N$) and nonzero by assumption, the following function
$$
B\to(\langle B,R_1\rangle,\cdots,\langle B,R_n\rangle)\in \mathbb{C}^n,\quad B\in\Sigma_R
$$ has a positive lower singular value $\mathfrak{Sing}_Z$ on $\Sigma_{R}$ (this follows since the above function is nonzero for any nonzero $B$ and is continuous in $B$, and the unit ball of $\Sigma_R$ is compact). The precise lower bound for $\mathfrak{Sing}_Z$ depends on the module $Z$ and actions $R_i$, but what is important here is only that $\mathfrak{Sing}_Z$ is fixed and positive. Therefore, 
$$
\sum_{i=1}^N \|P_{\Sigma_{R}}A_i\|_{\text{HS}}^2\leq\frac{1}{\mathfrak{Sing}_Z^2}\sum_{i=1}^N\sum_{\ell=1}^n|\langle P_{\Sigma_R}A_i,R_\ell\rangle|^2=\frac{1}{\mathfrak{Sing}_Z^2}\sum_{i=1}^N\sum_{\ell=1}^n|\langle A_i,R_\ell\rangle|^2
$$
since $R_\ell\in\Sigma_R$. Then if the unit vector $A$ violates \eqref{equationest702}, this implies that
$$
\sum_{i=1}^N\|P_{\mathcal{D}_R}A_i\|_{\text{HS}}^2=1-\sum_{i=1}^N\|\mathcal{P}_{\Sigma_R}A_i\|_{\text{HS}}^2\geq 1-\frac{1}{\mathfrak{Sing}_Z^2}N^{-B_Z+1}\geq\frac{3}{4}
$$whenever $N$ is sufficiently large, where we recall that 
$\mathcal{D}_R$ is the orthogonal complement of $\Sigma_R$ and we use the fact that $A$ is a unit vector. In particular, $\mathcal{D}_R\neq 0$ and $\Sigma_R\subsetneq \operatorname{End}(Z)$.

Therefore, we denote by
$$
B_i:=P_{\mathcal{D}_R}A_i,\quad C_i=\mathcal{P}_{\Sigma_R}A_i
,$$
then 
\begin{equation}\label{atlines739}
\sum_{i=1}^N\|B_i\|_{\text{HS}}^2\geq\frac{3}{4},\quad\sum_{i=1}^N\|C_i\|_{\text{HS}}^2\leq\frac{1}{\mathfrak{Sing}_Z^2} N^{-B_Z+1}.
\end{equation}On the good event $\Omega_1:=\{\|X_\ell\|_{\text{op}}\leq 8\forall \ell\}$, which has probability $1-\exp(-\Omega(N)),$ we have that 
$$
\|\sum_{i,\ell}\overline{X_\ell(i,k)}R_\ell^*C_i\|\leq \mathcal{C}_RN^{(-B_Z+1)/2},
$$for some $\mathcal{C}_R>0$ depending only on the module $Z$ and $R_1,\cdots,R_n$. 

Projecting the adjoint equation \eqref{adjointequation} onto its $\mathcal{D}_R$-component, we get that 
\begin{equation}\label{atlines747}
\|B_k+\sum_{i,\ell} \overline{X_\ell(i,k)}R_\ell^*B_i\|\leq \mathcal{C}_RN^{(-B_Z+1)/2}\quad \forall k\neq a.
\end{equation}
However, by Theorem \ref{theorem3.4}, with probability $1-\exp(-\Omega(N))$, the least singular value of $\mathfrak{D}_a$ is at least $N^{-D_Z}$. Thus if we take $B_Z=2D_Z+3$, then for large $N$ we cannot find any $B$ satisfying both \eqref{atlines747} and \eqref{atlines739}. This leads to a contradiction.
\end{proof}

\subsection{Frame lower bound}
The following is a direct consequence of Proposition \ref{proposition}: 

\begin{corollary}\label{corollary3.70}Let $R_1,\cdots,R_n$ act irreducibly and non-trivially on the module $Z$.
Then with probability $1-\exp(-\Omega(N))$ over the columns in $\mathcal{T}_{-a}$,
the affine Gaussian map 
$$
D_a(\xi):Z\to W_a
$$ satisfies that, for any unit vector $A\in\operatorname{Hom}(Z,W_a)$,
\begin{equation}\label{covariancelowerbound}
\mathbb{E}\left|\langle A,D_a(\xi)-\mathbb{E}[D_a(\xi)]\rangle\right|^2\geq N^{-B_Z}
\end{equation} whenever $N\geq N_0(R)$. Here $N_0(R)\in\mathbb{N}_+$ depends on the module $Z$ and $R_1,\cdots,R_n$, while the constant $B_Z$ depends only on $\dim Z$. The coefficient in $\Omega(N)$ depends only on $n$. Moreover, on the same high probability event over $\mathcal{T}_{-a}$ and when $N\geq N_0$, we can find a constant $C_Z>0$ depending only on $\dim Z$ such that we have the following estimate
\begin{equation}\label{determinantmeans}
\mathbb{E}[|\det D_a(\xi)|^2]\geq C_Z N^{-B_Z\dim Z}.
\end{equation}\end{corollary}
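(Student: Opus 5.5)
The corollary has two parts: a variance (covariance) lower bound and a determinant second-moment lower bound. We treat them in order, working throughout on the event of Proposition \ref{proposition}, which has probability $1-\exp(-\Omega(N))$ over the columns of $\mathcal{T}_{-a}$ and on which \eqref{equationest702} holds for every unit $A\in\operatorname{Hom}(Z,W_a)$.

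For \eqref{covariancelowerbound}, we use the formula already computed,
$$\langle A,D_a(\xi)\rangle-\mathbb{E}\langle A,D_a(\xi)\rangle=\sum_{i,\ell}\xi_i^\ell\langle A_i,R_\ell\rangle.$$
Note that $\langle A,P_{W_a}C\rangle=\langle A,C\rangle$ because $A$ takes values in $W_a$. The $\xi_i^\ell$ are independent centered complex Gaussians with variance $1/N$, and they are independent of $\mathcal{T}_{-a}$. Hence the conditional variance equals
$$\frac1N\sum_{i,\ell}|\langle A_i,R_\ell\rangle|^2,$$
and this is at least $N^{-B_Z}$ by Proposition \ref{proposition}.

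For \eqref{determinantmeans}, fix an orthonormal basis of $Z$ and an orthonormal basis of $W_a$; recall $\dim W_a=\dim Z=:k$ almost surely by Fact \ref{facts3.33}. Then $D_a(\xi)$ becomes a $k\times k$ matrix $M+G$, where $M=\mathbb{E}D_a$ is deterministic given $\mathcal{T}_{-a}$ and $G$ is a centered complex Gaussian matrix. The first part says exactly that the covariance operator $\Gamma$ of $G$, viewed as a Gaussian vector in $\operatorname{Hom}(Z,W_a)\cong\mathbb{C}^{k^2}$, satisfies $\Gamma\succeq N^{-B_Z}\,\mathrm{Id}$.

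We then decompose $G\overset{d}{=}N^{-B_Z/2}G_0+G_1$ with $G_0$ a standard complex Ginibre-type matrix (i.i.d. $\mathcal{N}_\mathbb{C}(0,1)$ entries) independent of $G_1$. This is possible since $\Gamma-N^{-B_Z}\mathrm{Id}\succeq0$. Conditioning on $G_1$, it suffices to show that for every deterministic $k\times k$ matrix $M'$,
$$\mathbb{E}|\det(M'+tG_0)|^2\ge c_k t^{2k},\qquad t=N^{-B_Z/2}.$$
Expanding the determinant multilinearly in the columns, the terms of different total degree in the entries of $G_0$ are orthogonal in $L^2$: by rotation invariance $G_0\mapsto e^{i\theta}G_0$, a homogeneous term of degree $j$ picks up the phase $e^{ij\theta}$. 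So $\mathbb{E}|\det(M'+tG_0)|^2$ is at least the squared $L^2$ norm of the top-degree term, which is $t^{2k}\,\mathbb{E}|\det G_0|^2=t^{2k}k!$. This gives
$$\mathbb{E}|\det D_a|^2\ge k!\,N^{-B_Zk},$$
so we can take $C_Z=k!$, depending only on $\dim Z$.

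The main obstacle is making the orthogonality-of-degrees argument rigorous. Orthogonality of homogeneous components of different degree under the circularly symmetric Gaussian measure follows from the phase invariance, since the cross terms of degrees $j\ne j'$ integrate $e^{i(j-j')\theta}$ to zero. A fallback is Anderson's inequality combined with a Gaussian anti-concentration bound. The remaining bookkeeping is routine: $N_0(R)$ and the probability $1-\exp(-\Omega(N))$ are inherited from Proposition \ref{proposition}, since no new randomness over $\mathcal{T}_{-a}$ is used.
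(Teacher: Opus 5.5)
Your proposal is correct and matches the paper's proof. The variance bound is read off directly from Proposition~\ref{proposition}. The determinant bound comes from splitting off an independent component $N^{-B_Z/2}G_0$ with $G_0$ standard Ginibre, conditioning on the remainder, and keeping only the top-degree term $N^{-B_Z\dim Z/2}\det G_0$ by orthogonality of the different degrees, which gives $C_Z=(\dim Z)!$. Your phase-rotation argument $G_0\mapsto e^{i\theta}G_0$ is a clean rigorous version of the paper's appeal to orthogonality of Gaussian chaoses; it works here because $\det D_a(\xi)$ is a holomorphic polynomial in circular Gaussians.
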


\begin{proof}
The first claim is immediate by Proposition \ref{proposition}. For fixed $\mathcal{T}_{-a}$, we now regard $D_a(\xi)$ as an affine Gaussian matrix. Then \eqref{covariancelowerbound} shows that it can be rewritten as 
\begin{equation}\label{shortcuts}
D_a(\xi)=\mathbb{E}D_a(\xi)+N^{-B_Z/2}G+G_0,
\end{equation}where $G$ is a standard $\dim Z\times \dim Z$ complex Ginibre matrix with each entry of variance 1, and $G_0$ is another Gaussian matrix independent of $G$. We condition on $G_0$, then the polynomial 
$$
\det(G_0+\mathbb{E}D_a(\xi)+N^{-B_Z/2}G)
$$
has, in its highest homogeneous chaos, the term $N^{-B_Z\dim Z/2}\det G$.
Since different Gaussian chaos are orthogonal in $L^2$, we get that
$$
\mathbb{E}_G[|\det(G_0+\mathbb{E}D_a(\xi)+N^{-B_Z/2}G)|^2]\geq N^{-B_Z\dim Z}\mathbb{E}|\det G|^2=(\dim Z)! N^{-B_Z\dim Z}.
$$ This verifies the claimed determinant $L^2$ lower bound.
\end{proof}
\subsection{Carbery-Wright and block negative second moment}

To convert the estimate in expectation in \eqref{determinantmeans} to a small ball estimate, we shall need the following anti-concentration result of Carbery and Wright:

\begin{theorem}\label{theoremlines102}(\cite[Theorem 8]{carbery2001distributional}). There exists a universal constant $C>0$ such that the following holds. Let $f:\mathbb{R}^n\to\mathbb{R}$ be any degree $d$ polynomial and $\mu\sim\mathcal{N}(0,I_n)$ is the standard Gaussian on $\mathbb{R}^n$. Then for every $\epsilon>0$,
$$
\mathbb{P}_{x\sim\mu}\{|f(x)|\leq\epsilon\cdot\mathbb{E}_{x\sim\mu}|f(x)|\}\leq C\cdot d\epsilon^{1/d}.
$$
    
\end{theorem}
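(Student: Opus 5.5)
This statement is the cited inequality of Carbery and Wright, and the paper uses it as a black box. If I had to supply a proof, I would follow the classical localization route. By homogeneity, normalize so that $\mathbb{E}_\mu|f|=1$. Write $t:=C d\epsilon^{1/d}$ and suppose, for contradiction, that $\mu\{|f|\leq\epsilon\}>t$. Consider the two functionals
$$g_1:=\mathbf{1}_{\{|f|\leq\epsilon\}}-t,\qquad g_2:=1-|f|+\delta .$$
For a small $\delta>0$ both have positive $\mu$-integral. After replacing the indicator by a lower semicontinuous approximation, they are of the form required by the Lov\'asz--Simonovits / Kannan--Lov\'asz--Simonovits localization lemma.

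The standard Gaussian is log-concave, so the localization lemma produces a segment $[a,b]\subset\mathbb{R}^n$ and a log-affine (hence log-concave) probability density $\nu$ on it such that $\int g_1\,d\nu>0$ and $\int g_2\,d\nu>0$. Parametrize the segment linearly. The restriction $p$ of $f$ is then a univariate polynomial of degree at most $d$, and it satisfies
$$\nu\{|p|\leq\epsilon\}>t,\qquad \int|p|\,d\nu\leq 1+\delta .$$
This reduces the whole statement to a one-dimensional inequality: for every log-concave probability $\nu$ on $\mathbb{R}$ and every polynomial $p$ of degree $d$,
$$\nu\{|p|\leq\epsilon\}\leq C d\,\Big(\epsilon\big/\textstyle\int|p|\,d\nu\Big)^{1/d}.$$
Choosing the universal constant $C$ in the one-dimensional bound to match the one in $t$ gives the contradiction after letting $\delta\to0$.

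For the one-dimensional inequality I would combine two ingredients.
\begin{enumerate}
\item The Remez-type sublevel estimate: for any interval $I$,
$$\big|\{x\in I:|p(x)|\leq\epsilon\}\big|\leq 4|I|\,\big(\epsilon/\sup_I|p|\big)^{1/d}.$$
This follows by factoring $p=c\prod_{i}(x-r_i)$ and using Chebyshev extremality.
\item Standard regularity of log-concave densities on $\mathbb{R}$. Take an interval $I$ of length comparable to the scale of $\nu$ that carries all but an exponentially small tail of the mass. On $I$ the density is at most $O(1/|I|)$, and the mass outside $I$ decays geometrically in the distance from $I$.
\end{enumerate}
With these, the $\nu$-mass of $\{|p|\leq\epsilon\}\cap I$ is bounded by $O\big((\epsilon/\sup_I|p|)^{1/d}\big)$, using $\sup_I|p|\geq\int_I|p|\,d\nu$. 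The tails would be handled by applying the same estimate on dyadically enlarged intervals $2^kI$. Polynomial growth gives $\sup_{2^kI}|p|\leq 2^{kd}\sup_I|p|$ up to constants, while the $\nu$-mass of $2^kI$ decays exponentially in $2^k$, so the tail contributions sum.

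The main obstacle I expect is this tail bookkeeping. The contribution $\int_{\mathbb{R}\setminus I}|p|\,d\nu$ must be controlled by $\sup_I|p|$ with only polynomial-in-$d$ losses: the factor $2^{kd}$ from polynomial growth has to be absorbed by the log-concave exponential decay $e^{-c2^k}$. Only then does the final constant come out linear in $d$, as the statement requires, rather than exponential.
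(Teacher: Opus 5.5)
The paper gives no proof of this statement; it quotes Carbery--Wright. Your strategy, localization followed by a one-variable argument, is the standard route to this inequality. The reduction step, however, has a sign error that breaks the contradiction as written.

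With $g_2=1-|f|+\delta$, the needle only satisfies $\int|p|\,d\nu\le 1+\delta$, which is an upper bound. The one-dimensional inequality $\nu\{|p|\le\epsilon\}\le Cd\,(\epsilon/\int|p|\,d\nu)^{1/d}$ is informative only when $\int|p|\,d\nu$ is bounded \emph{below}. Your two needle constraints are in fact compatible: any needle lying near the zero set of $f$ satisfies both. So no contradiction can be extracted.

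The fix is to take $g_2:=|f|-(1-\delta)$. This still has $\int g_2\,d\mu=\delta>0$, and it yields $\int|p|\,d\nu>1-\delta$ on the needle. Even then, the one-dimensional bound gives only $\nu\{|p|\le\epsilon\}\le Cd\,\epsilon^{1/d}(1-\delta)^{-1/d}$. That does not contradict $\nu\{|p|\le\epsilon\}>Cd\,\epsilon^{1/d}$ for any fixed $\delta>0$. Since the needle depends on $\delta$, ``letting $\delta\to0$'' afterwards is not meaningful.

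Put the slack into $t$ instead:
\begin{enumerate}
\item From the strict assumption $\mu\{|f|\le\epsilon\}>Cd\,\epsilon^{1/d}$, pick $t$ strictly in between.
\item Pick $\delta>0$ and $\epsilon'>\epsilon$ with $Cd\,(\epsilon'/(1-\delta))^{1/d}<t$.
\item Use $g_1=\mathbf{1}_{\{|f|<\epsilon'\}}-t$. The indicator of the closed set $\{|f|\le\epsilon\}$ is upper, not lower, semicontinuous, so the open set is needed.
\end{enumerate}

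The rest of your plan is sound. Localizing against the Gaussian gives a needle density equal to $\ell(s)^{n-1}$ times the Gaussian density along the segment. This is log-concave but not log-affine; you only use log-concavity, so the mislabel is harmless.

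The step you flag as the main obstacle closes easily, and it is exactly where the factor $d$ comes from. Take $I$ centered at the mean of $\nu$ with half-length equal to its standard deviation. Chebyshev growth gives $\sup_{2^kI}|p|\le 2^{(k+1)d}\sup_I|p|$, and the log-concave tail bound gives $\nu(\mathbb{R}\setminus 2^kI)\le Ce^{-c2^k}$. Together these give
$$\int|p|\,d\nu\le \sup_I|p|\Big(1+C\sum_{k\ge0}2^{(k+2)d}e^{-c2^k}\Big)\le (C'd)^d\sup_I|p|,$$
with the dominant terms being those with $2^k\approx d$. Taking the $1/d$-th power turns this $d^d$ loss into the linear factor $C'd$. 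So there is nothing to protect against: a loss of size $(C'd)^d$ is affordable, and a merely exponential loss $C^d$ would only cost a constant.
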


Now we can complete the proof of Theorem \ref{theorem1290}.
\begin{proof}[\proofname\ of Theorem \ref{theorem1290} for irreducible factors in \eqref{label526}] We write $Z=S_{I(b)}$ for an irreducible factor under left $R_1,\cdots,R_n$ action. When the action is trivial, then we have identity matrix on this factor $Z$ and the proof is trivial. Otherwise, proceed as follows. To simplify notation, we again use $\mathcal{T}_N$ to denote any of the irreducible diagonal factor $\mathcal{T}_{N,I(b)}$.

We work on the high probability event on $\mathcal{T}_{-a}$ stated in Corollary \ref{corollary3.70}, which holds with probability $1-\exp(-\Omega(N))$. Then for any fixed $P_{W_a}$, each entry of the matrix $D_a(\xi)$ is an affine linear combination of independent complex Gaussians, and therefore $\det D_a(\xi)$ is a polynomial in independent complex Gaussians of bounded degree. While the polynomial differs as $\mathcal{T}_{-a}$ changes, the degree of the polynomial is uniformly bounded.
 Then Theorem \ref{theoremlines102} (applied to the real polynomial $|\det D_a(\xi)|^2$, viewing the complex Gaussian variables through their real and imaginary parts) gives that, for any $\epsilon>0$, we can find constants $C_Z,C_Z',c_Z>0$ such that
\begin{equation}\label{cerberywrights}
\mathbb{P}_\xi\{|\det D_a(\xi)|\leq\epsilon \}\leq C_ZN^{C_Z'}\epsilon^{c_Z}
,\end{equation}
where the constants $C_Z,C_Z',c_Z$ depend only on the module $Z$.

We work on the good operator norm event $\|X_\ell\|\leq 8\quad \forall \ell$, and thus we can find some $C_{2,Z}>0$ so that $\|D_a(\xi)\|\leq C_{2,Z}$.
From this we deduce that 
$$\sigma_{min}(D_a(\xi))\leq \epsilon$$
implies 
$$|\det D_a(\xi)|\leq \epsilon(C_{2,Z})^{\dim Z-1}.
$$
Therefore, we deduce that for any $\epsilon>0$,
\begin{equation}\label{goodlines839}
\mathbb{P}(\sigma_{min}(D_a(\xi))\leq \epsilon)\leq C_{3,Z}N^{C_Z'}\epsilon^{c_Z}
.
\end{equation}
Finally, we use the block negative second moment identity (Lemma  \ref{blocksecondidentity})
to deduce that, if each big column $a$ satisfies 
$$
\sigma_{min}(D_a)\ge t,
$$then 
$$
\sigma_{min}(\mathcal{T}_N)\geq \frac{t}{\sqrt{N\dim Z}}.
$$ We take a union bound for \eqref{goodlines839} over $a\in[N]$, combine with the high probability good event on $\mathcal{T}_{-a}$ and on $\|X_\ell\|$ (both hold with probability $1-\exp(-\Omega(N))$) to conclude that 
\begin{equation}\label{488eachirreducible}
    \mathbb{P}\{\sigma_{min}(\mathcal{T}_N)\leq\epsilon\}\leq C_{4,Z}N^{C_{5,Z}}\epsilon^{c_Z}+\exp(-\Omega(N)),
\end{equation}
where $C_{i,Z},i=2,3,4,5$ are constants depending on the module $Z$.
This completes the proof in the simple factor case.
\end{proof}

 \begin{proof}[\proofname\ of Theorem \ref{theorem1290} for fixed coefficients] We work under the good operator norm event so that $\|\mathcal{T}_N\|\leq C_6$ for some $C_6>0$ depending on $R_1,\cdots,R_n$. We use that each irreducible factor $\mathcal{T}_{N,I(b)},1\leq b\leq s$ individually satisfies an estimate of the form \eqref{488eachirreducible} with possibly different exponents, and then use Fact \ref{fact4570} to combine them into an overall least singular value estimate for $\mathcal{T}_N$ by changing the exponents in $\epsilon$ and in $N$.\end{proof}

\begin{remark}
    Since $D_a(\xi)$ is conditionally Gaussian, we may avoid using Carbery-Wright and directly estimate least singular value via the decomposition \eqref{shortcuts}. But as we later consider non-Gaussian entries, this shortcut is no longer available, yet the present route of proof via Carbery-Wright is more generalizable.
\end{remark} 

\subsubsection{Constant dependence}
\begin{proof}[\proofname\ of Theorem \ref{theorem1290}, the exponents $c_0^R,c^R$ can depend only on $m,n$.]
We note that for any coefficient $R_1,\cdots,R_n$ acting on $\mathbb{C}^m$, when we define a composition series where each quotient is simple, the possible dimension series $\dim S_{I(b)}$ of each irreducible factor \eqref{dimsib} arising in the composition series \eqref{filtrationgaps} has only finitely many possibilities since $m$ is fixed. The total possible number of dimension series is at most the number of ways to write $m$ as the sum of positive integers, and multiplied by $m!$ which is the number of permutations on $m$.

Recall that in Theorem \ref{theorem3.4}, the exponent $D_Z$ in $N$ only depends on $\dim Z$ (the dimension of the irreducible factor), although the constant $N_0$ there depends on $R$. In the proof of Proposition \ref{proposition}, we use the choice $B_Z=2D_Z+3$, so it also depends only on $\dim Z$. The exponents in $\epsilon$ arising from Carbery-Wright also only rely on $\dim Z$, so we can use the same exponent in $\epsilon$ and $N$ for the least singular value lower bound for all coefficients $R_1,\cdots,R_n$ generating the composition series with the same dimension sequence of irreducible factors. Since the number of dimension series is finite, we can choose the worst exponent, i.e. the largest exponent in $N$ and the smallest exponent in $\epsilon$, which covers all possible coefficients. The constant $c_1^R$ comes from an operator norm event on $\|X_\ell\|$, so it depends only on $n$.
\end{proof}

\section{Dual least singular value estimate with dimensional surplus}
\label{secitons4}This section is devoted to the proof of Theorem \ref{theorem3.4}. Recall that we consider the random mapping $\mathfrak{D}_a$ defined via
$$(\mathfrak{D}_aB)_k=B_k+\sum_{i,\ell}\overline{X_\ell(i,k)}R_\ell^*B_i,\quad k\neq a.$$

Let $E\subset \mathcal{D}_R:=\Sigma_R^\perp\subset \operatorname{End}(Z)$ be any linear subspace, and define the following coefficient maps 
\begin{equation}
\Psi_E:E\otimes\mathbb{C}^n\to \operatorname{End}(Z),\quad \Psi_E((x_\ell)_\ell)=\sum_{\ell=1}^n R_\ell^*x_\ell.
\end{equation} Then, by definition in \eqref{yyyeee}, $Y(E)=\operatorname{Ran}\Psi_E$. However, the smallest singular value of this map can become very small and make later small-ball estimates infeasible. To remedy this, take a scale $\sigma\geq 0$ (whose value will be determined later), and we define the following subset of $Y(E)$:

\begin{equation}\label{scaleysigmae}
Y_\sigma(E):=\text{span}\{\text{left singular vectors of }\Psi_E\text{ with singular value }\geq\sigma\}.
\end{equation}We call this the active image, which is the part of $Y(E)$ such that the Gaussian random term has variance at least $\sigma^2/N$. The dimensions in $Y(E)\cap( Y_\sigma(E))^\perp$ are discarded and not counted for small ball dimensions.

Then we define the following quotient map 
\begin{equation}
P_{E,\sigma}:E\to \operatorname{End}(Z)/Y_\sigma(E),\quad P_{E,\sigma}x=[x]
.\end{equation}

Take another threshold $\tau>0$ which will be fixed later, and we define 
\begin{equation}\label{scaletausigmae}
C_{\tau,\sigma}(E)
:=\text{span}\{\text{right singular vectors of }P_{E,\sigma}\text{ with singular value }\leq\tau\}.\end{equation}Informally, $C_{\tau,\sigma}(E)$ is the part of $E$ which is within an angle $\tau$ to the active image $Y_\sigma(E)$.

Throughout the section we work under the following good operator norm event: there exists some constant $M_{\text{op}}>0$ such that 
\begin{equation}\label{goodoperatornorm}
 \Omega_{\text{good}}:=\{   \|\Psi_E\|\leq M_{\text{op}}\quad \forall E,\quad \|X_\ell\|\leq 8\quad \forall 1\leq \ell\leq n\}.
\end{equation}
Since $\|\Psi_E\|\leq\sum_{\ell=1}^n\|R_\ell^*\|$, $M_{\text{op}}$ always exists and we fix such a choice. By standard properties of operator norm bounds for Ginibre, we have $$\mathbb{P}(\Omega_{\text{good}})\geq 1-\exp(-\Omega(N)),$$
where the multiplicative constant in $\Omega(N)$ depends only on $n$.
\subsection{Preliminaries for a net argument}In this section, we list all technical preparations that we will need to prove Theorem \ref{theorem3.4}.

\subsubsection{A robust lemma for dimension surplus}
We shall first need the following quantitative strengthening for Proposition \ref{propositionnoinvariance}:

\begin{Proposition}
\label{proposition1044}    For the fixed module $Z$ and nontrivial actions $R_1,\cdots,R_n$, we can find two constants $\sigma_0\in(0,1)$ and $\tau_0>0$ such that, for any $\sigma\leq\sigma_0$ and $\tau\leq \tau_0$, and any linear subspace $0\neq E\subset \mathcal{D}_R$, we have 
    $$
\dim Y_\sigma(E)\geq\dim C_{\tau,\sigma}(E)+1.
    $$
\end{Proposition}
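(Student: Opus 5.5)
We argue by contradiction and compactness over Grassmannians, reducing to the qualitative statement of Proposition \ref{propositionnoinvariance}. The argument has three steps: a trivial upper bound $\dim C_{\tau,\sigma}(E)\le\dim Y_\sigma(E)$, a contradiction setup with sequences $\sigma_k,\tau_k\to0$, and a limit argument producing $Y(E)\subset E$.

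\emph{Step 1: the weak inequality.} Fix $\tau<1$. For $x\in C_{\tau,\sigma}(E)$ we have $\|P_{E,\sigma}x\|\le\tau\|x\|$, i.e. $\operatorname{dist}(x,Y_\sigma(E))\le\tau\|x\|$. Hence the orthogonal projection onto $Y_\sigma(E)$, restricted to $C_{\tau,\sigma}(E)$, is injective. This gives $\dim C_{\tau,\sigma}(E)\le\dim Y_\sigma(E)$. So the only thing to exclude is equality. If equality holds, this projection is an isomorphism $C_{\tau,\sigma}(E)\to Y_\sigma(E)$. Consequently every unit vector of $Y_\sigma(E)$ lies within distance $O(\tau)$ of $C_{\tau,\sigma}(E)\subset E$, and the projection metric satisfies $\rho(C_{\tau,\sigma}(E),Y_\sigma(E))=O(\tau)$ for $\tau$ small.

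\emph{Step 2: contradiction sequence.} Suppose the proposition fails. Then there are $\sigma_k\to0$, $\tau_k\to0$ and nonzero subspaces $E_k\subset\mathcal{D}_R$ with $\dim C_{\tau_k,\sigma_k}(E_k)=\dim Y_{\sigma_k}(E_k)$, using Step 1. The dimensions take only finitely many values, so we pass to a subsequence along which $\dim E_k=e$ and $\dim Y_{\sigma_k}(E_k)=\dim C_{\tau_k,\sigma_k}(E_k)=r$ are constant. By compactness of the Grassmannians of $\operatorname{End}(Z)$ we also arrange $E_k\to E$, $Y_{\sigma_k}(E_k)\to Y'$ and $C_{\tau_k,\sigma_k}(E_k)\to C'$. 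The limit $E$ is a nonzero subspace of the closed subspace $\mathcal{D}_R$. From Step 1, $\rho(C_k,Y_k)=O(\tau_k)\to0$, so $C'=Y'$. Since $C_k\subset E_k$, we get $Y'\subset E$.

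\emph{Step 3: the limit is invariant.} We show $Y(E)\subset Y'$. Take $x\in E$ and unit vectors $x_k\in E_k$ with $x_k\to x$ (possible since $E_k\to E$). For each $\ell$, consider $x_k\otimes e_\ell\in E_k\otimes\mathbb{C}^n$. By the definition \eqref{scaleysigmae} via the SVD of $\Psi_{E_k}$, the component of $\Psi_{E_k}(x_k\otimes e_\ell)=R_\ell^*x_k$ orthogonal to $Y_{\sigma_k}(E_k)$ has norm at most $\sigma_k$. Letting $k\to\infty$ gives $R_\ell^*x\in Y'$, hence $Y(E)\subset Y'\subset E$. This contradicts Proposition \ref{propositionnoinvariance}, whose degenerate case is excluded because the $R_\ell$ act nontrivially. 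So suitable $\sigma_0,\tau_0$ exist.

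The main obstacle is that $Y_\sigma(E)$ and $C_{\tau,\sigma}(E)$ depend discontinuously on $E$ and on the thresholds. The plan handles this by never using continuity of these maps. It uses only two one-sided facts that survive limits: the $\sigma_k$-small residual outside $Y_{\sigma_k}(E_k)$, and the $\tau_k$-closeness of $C_k$ to $Y_k$. It also fixes dimensions by passing to subsequences. I would double-check the Grassmannian convergence step $\rho(C_k,Y_k)\to0$ under equal dimensions; this is a standard perturbation fact for subspaces of the same dimension.
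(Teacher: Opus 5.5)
Your proof is correct. The step you flagged is fine: for subspaces of equal dimension one has $\rho(C,Y)=\|P_{Y^\perp}P_C\|_{\text{op}}\le\tau$. You use the same ingredients as the paper: injectivity of the projection onto $Y_\sigma(E)$ restricted to $C_{\tau,\sigma}(E)$, the reverse approximate inclusion in the equality case (Lemma \ref{lemma4.20}), the $\sigma$-bound on the inactive part of $\Psi_E$, and compactness of Grassmannians combined with Proposition \ref{propositionnoinvariance}.

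The only difference is how the compactness is packaged. The paper turns the equality case into $\|P_{E^\perp}R_\ell^*P_E\|_{\text{op}}\le M_{\text{op}}\tau/\sqrt{1-\tau^2}+\sigma$, an estimate involving only $E$. It then bounds the continuous functional $E\mapsto\sum_\ell\|P_{E^\perp}R_\ell^*P_E\|_{\text{op}}$ below by some $C_{\mathcal{R}}>0$ on each compact Grassmannian. This sidesteps the discontinuity of $E\mapsto Y_\sigma(E)$ and gives the explicit admissible region $n(M_{\text{op}}\tau/\sqrt{1-\tau^2}+\sigma)<C_{\mathcal{R}}$. Your contradiction argument instead passes to limits in $Y_{\sigma_k}(E_k)$ and $C_{\tau_k,\sigma_k}(E_k)$ directly. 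It gives only the existence of $\sigma_0,\tau_0$, which is all the statement requires.
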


For two linear subspaces $Y,C\subset F$ in a Hilbert space $F$, and $t>0$, we use the following notation $Y\subset_t C$ to mean that 
\begin{equation}Y\subset_t C\Rightarrow
\operatorname{dist}(y,C)\leq t\|y\|\quad\forall y\in Y.
\end{equation}
We shall need the following simple linear algebra lemma (proof see Appendix \ref{AppendixA})
\begin{lemma}\label{lemma4.20}
    Let $F$ be a finite-dimensional linear subspace and $C,Y\subset F$ with 
    $$
\dim C=\dim Y=r.
    $$We suppose that for some $\tau<1$ we have
    $$
\|P_{Y^\perp}x\|\leq\tau\|x\|\quad\forall x\in C.
    $$Then we have the other side approximate inclusion 
    $$
Y\subset_{C_\tau}C,\quad C_\tau=\frac{\tau}{\sqrt{1-\tau^2}}.
    $$
\end{lemma}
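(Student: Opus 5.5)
We need to prove Lemma \ref{lemma4.20}: $\dim C=\dim Y=r$, and $\|P_{Y^\perp}x\|\le\tau\|x\|$ for all $x\in C$, with $\tau<1$. The claim is that every $y\in Y$ satisfies $\operatorname{dist}(y,C)\le \frac{\tau}{\sqrt{1-\tau^2}}\|y\|$.

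The plan is to use the restricted projection $T:=P_Y|_C:C\to Y$ and show that it is invertible with a controlled inverse. For $x\in C$ we have $\|x\|^2=\|P_Yx\|^2+\|P_{Y^\perp}x\|^2$, so the hypothesis gives
\[
\|Tx\|\ge\sqrt{1-\tau^2}\,\|x\|.
\]
Since $\tau<1$, $T$ is injective. Because $\dim C=\dim Y=r$, injectivity makes $T$ a bijection $C\to Y$, with $\|T^{-1}\|\le (1-\tau^2)^{-1/2}$. This is the only place the equal-dimension assumption enters, and it is essential: without it $Y$ could contain directions far from $C$.

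Now take any $y\in Y$ and set $x:=T^{-1}y\in C$, so that $P_Yx=y$. Then $x-y=P_{Y^\perp}x$, and therefore
\[
\operatorname{dist}(y,C)\le\|y-x\|=\|P_{Y^\perp}x\|\le\tau\|x\|\le\frac{\tau}{\sqrt{1-\tau^2}}\|y\|.
\]
This is the desired inclusion $Y\subset_{C_\tau}C$ with $C_\tau=\tau/\sqrt{1-\tau^2}$.

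There is no real obstacle here. The only points needing care are applying the Pythagorean decomposition within $C$ and using the dimension count to get surjectivity of $T$.
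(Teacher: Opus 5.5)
Your proposal is correct and follows essentially the same route as the paper's proof. Both arguments use the Pythagorean lower bound $\sqrt{1-\tau^2}$ on $P_Y|_C$, invoke equal dimensions to get surjectivity, and bound $\operatorname{dist}(y,C)$ by $\|P_{Y^\perp}x\|$ for the preimage $x$ of $y$.
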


\begin{proof}[\proofname\ of Proposition \ref{proposition1044}]
    
By definition of $C_{\tau,\sigma}(E)$, we have 
$$C_{\tau,\sigma}(E)\subset_{\tau}Y_\sigma(E). 
$$
Then the projection $P_{Y_\sigma(E)}\mid_{C_{\tau,\sigma}(E)}$ of $C_{\tau,\sigma}(E)$ onto $Y_\sigma(E)$ is injective, so that $\dim Y_\sigma(E)\geq\dim C_{\tau,\sigma}(E)$.

If we have $\dim C_{\tau,\sigma}(E)=\dim Y_{\sigma}(E)$, then Lemma \ref{lemma4.20} yields
$$
Y_\sigma(E)\subset_{\frac{\tau}{\sqrt{1-\tau^2}}}C_{\tau,\sigma}(E).
$$Since $C_{\tau,\sigma}(E)\subset E$, we have 
$$
Y_\sigma(E)\subset_{\frac{\tau}{\sqrt{1-\tau^2}}}E.
$$ Now we take into account the inactive image subspace. Take any $x\in E$ and any $\ell$, write 
$$
S_\ell(x)=\Psi_E(x\otimes e_\ell)=\Psi_{\geq\sigma}(x\otimes e_\ell)+\Psi_{<\sigma}(x\otimes e_\ell)
$$where $\Psi_{\geq\sigma}$ and $\Psi_{<\sigma}$ are respectively the parts of $\Psi$ with singular values larger or smaller than $\sigma$. Then by definition, 
$$
\Psi_{\geq\sigma}(x\otimes e_\ell)\in Y_\sigma(E),
$$and as $Y_\sigma(E)\subset \frac{\tau}{\sqrt{1-\tau^2}}E$, we have 
$$
\operatorname{dist}(\Psi_{\geq\sigma}(x\otimes e_\ell),E)\leq \frac{\tau}{\sqrt{1-\tau^2}}\|\Psi_{\geq\sigma}(x\otimes e_\ell)\|.
$$Since $\|\Psi_E\|\leq M_{\text{op}}$, we write 
$$
\|\Psi_{\geq\sigma}(x\otimes e_\ell)\|\leq M_{\text{op}}\|x\|.
$$

The inactive part simply satisfies 
$$
\|\Psi_{<\sigma}(x\otimes e_\ell)\|\leq\sigma\|x\|.
$$Combining both inequalities, we get
$$
\operatorname{dist}(S_\ell x,E)\leq (\sigma+M_{\text{op}}\frac{\tau}{\sqrt{1-\tau^2}})\|x\|.
$$That is, for each $1\leq \ell\leq n$, we have 
\begin{equation}\label{montannastate}
\|P_{E^\perp}S_\ell P_E\|_{op}\leq M_{\text{op}}\frac{\tau}{\sqrt{1-\tau^2}}+\sigma.
\end{equation}

However, we claim that there exists a constant $C_\mathcal{R}>0$ such that for any nonzero subspace $0\neq E\subset \mathcal{D}_R$, we have that 
\begin{equation}
\label{frameequations2}\sum_{\ell=1}^n\|P_{E^\perp}S_\ell P_E\|_{\text{op}}\geq C_\mathcal{R}>0.
\end{equation}
Granted this estimate, we see that \eqref{montannastate} cannot hold for each $1\leq\ell\leq n$ when $n(M_\text{op}\frac{\tau}{\sqrt{1-\tau^2}}+\sigma)< C_\mathcal{R}$.  
This forces, when $\sigma$ and $\tau$ are both small,
$$ 
\dim C_{\tau,\sigma}(E)+1\leq \dim Y_\sigma(E).
$$

Finally, we justify the validity of \eqref{frameequations2}. 
For each dimension $0<d\leq \dim \mathcal{D}_R$, the function $\sum_{\ell=1}^n\|P_{E^\perp}S_\ell P_E\|_{op}$ is a continuous function for $E$ in the Grassmannian $\operatorname{Gr}(d,\mathcal{D}_R)$, and it never takes the value 0 (otherwise if it took the value 0 at some subspace $E$, then $E$ is invariant under the left multiplication of $R_\ell^*$ for all $\ell$, so that $E$ is a left ideal of $\operatorname{End}(Z)$, but this leads to a contradiction to Proposition \ref{propositionnoinvariance}.) Then the function is never zero, and thus attains nonzero infimum value on every Grassmannian $\operatorname{Gr}(d,\mathcal{D}_R)$.
\end{proof}

\subsubsection{Geometric intersections and solution to kernel equation}
We will use the following abstract tube covering lemma for the kernel equation satisfied by a unit normal vector $A$.
\begin{lemma}\label{theorem4.4}
In a complex Hilbert space $\mathcal{H}$, let $E,Y\subset \mathcal{H}$ be linear subspaces. Denote by $P_{Y^\perp}$ the orthogonal projection onto $Y^\perp$ and define 
$$
T:=P_{Y^\perp}\mid_E:E\to Y^\perp.
$$For any threshold $\tau>0$, denote by 
$$
C_\tau(E,Y)\subset E
$$ the vector space spanned by all right singular vectors of $T$ with singular value at most $\tau$. Denote their dimensions by 
$$
c_\tau:=\dim_\mathbb{C}C_\tau(E,Y),\quad e:=\dim_\mathbb{C}E.
$$We then define the tuple 
$$
\mathcal{T}_\rho(E,Y):=\{x\in E:\|x\|\leq 2,\operatorname{dist}(x,Y)\leq\rho\}.
$$Then we have the following inclusion 
\begin{equation}
\label{followinginclusion}\mathcal{T}_\rho(E,Y)\subset (B_{C_\tau(E,Y)}(2))+\frac{\rho}{\tau}B_E(1).   
\end{equation}
More specifically, if we have the following orthogonal decomposition 
$$
E=C_\tau(E,Y)\oplus (C_\tau(E,Y))^\perp, 
$$then each element $x\in\mathcal{T}_p(E,Y)$ can be decomposed as 
\begin{equation}\label{line1196}
x=x_0+x_1,\quad x_0\in C_\tau(E,Y),\quad x_1\in (C_\tau(E,Y))^\perp\cap E,
\end{equation}satisfying that $\|x_0\|\leq 2,\|x_1\|\leq\rho/\tau$.

Therefore, for every mesh size $0<\epsilon<1$, $\mathcal{T}_\rho(E,Y)$ has an $\epsilon$-net of cardinality at most 
\begin{equation}\label{coveringbounds}
\mathcal{N}_\epsilon(\mathcal{T}_\rho(E,Y))\leq \left(\frac{C}{\epsilon}\right)^{2c_\tau}\left(\frac{C\max\{\rho/\tau,\epsilon\}}{\epsilon}\right)^{2(e-c_\tau)}.
\end{equation}
Moreover, $\mathcal{T}_\rho(E,Y)$ has an $\epsilon$-net $\mathcal{N}_\epsilon'$ contained entirely inside $T_\rho(E,Y)$ of cardinality
\begin{equation}
\mathcal{N}_\epsilon'(\mathcal{T}_\rho(E,Y))\leq \left(\frac{2C}{\epsilon}\right)^{2c_\tau}\left(\frac{2C\max\{\rho/\tau,\epsilon/2\}}{\epsilon}\right)^{2(e-c_\tau)}.
\end{equation}

\end{lemma}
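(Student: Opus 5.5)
We use the singular value decomposition of $T=P_{Y^\perp}|_E$. Let $v_1,\dots,v_e$ be an orthonormal basis of right singular vectors of $T$ in $E$, with singular values $s_1\le\dots\le s_e$. Then $C_\tau(E,Y)=\operatorname{span}\{v_j:s_j\le\tau\}$, its orthogonal complement in $E$ is spanned by the $v_j$ with $s_j>\tau$, and the two pieces are $T^*T$-invariant.

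Given $x\in\mathcal{T}_\rho(E,Y)$, write $x=x_0+x_1$ as in \eqref{line1196}. Orthogonality gives $\|x_0\|\le\|x\|\le 2$. Next, $\|Tx\|=\|P_{Y^\perp}x\|=\operatorname{dist}(x,Y)\le\rho$. Since the $v_j$ are right singular vectors, the images $Tv_j$ are mutually orthogonal with $\|Tv_j\|=s_j$. Hence
\[
\|Tx\|^2=\sum_j s_j^2|\langle x,v_j\rangle|^2\ge\tau^2\|x_1\|^2 ,
\]
because every $v_j$ in the support of $x_1$ has $s_j>\tau$. This yields $\|x_1\|\le\rho/\tau$ and proves \eqref{followinginclusion}. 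The argument uses no probability.

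For the covering bound \eqref{coveringbounds}, note that $\mathcal{T}_\rho(E,Y)$ lies in the product of the complex $2$-ball of $C_\tau(E,Y)$ (real dimension $2c_\tau$) and the complex ball of radius $\min\{2,\rho/\tau\}$ in $C_\tau^\perp\cap E$ (real dimension $2(e-c_\tau)$). We take an $\epsilon/2$-net of the first factor, of size at most $(C/\epsilon)^{2c_\tau}$ by the standard volumetric bound. We take an $\epsilon/2$-net of the second factor, of size at most $(C\max\{\rho/\tau,\epsilon\}/\epsilon)^{2(e-c_\tau)}$. If $\rho/\tau<\epsilon/2$, a single point suffices for the second factor, and the $\max$ handles this case. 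Since the decomposition is orthogonal, the product net is an $\epsilon$-net, and its cardinality is the product of the two bounds.

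For the intrinsic net $\mathcal{N}'_\epsilon$, we use the standard replacement trick. First take an $\epsilon/2$-net of $\mathcal{T}_\rho(E,Y)$ using the previous bound with $\epsilon$ replaced by $\epsilon/2$; this produces the constants $2C$ and $\max\{\rho/\tau,\epsilon/2\}$. Discard every net point whose $\epsilon/2$-ball misses $\mathcal{T}_\rho$. Replace each remaining point by an element of $\mathcal{T}_\rho$ inside its $\epsilon/2$-ball. By the triangle inequality the result is an $\epsilon$-net contained in $\mathcal{T}_\rho$, and its cardinality has not increased.

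There is no real obstacle in this lemma. The one point needing care is the identity $\|Tx\|^2=\sum_j s_j^2|\langle x,v_j\rangle|^2$, which requires the $v_j$ to be genuine right singular vectors, so that the $Tv_j$ are orthogonal. One must also keep the $\max$ terms, which cover the regime where $\rho/\tau$ is smaller than the mesh.
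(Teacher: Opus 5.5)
Your proposal is correct and follows essentially the same route as the paper. Both use the SVD splitting $x=x_0+x_1$ with $\|Tx_1\|\ge\tau\|x_1\|$ and $\|Tx\|=\operatorname{dist}(x,Y)\le\rho$, then a product of $\epsilon/2$-nets on the two orthogonal pieces, then the standard replacement trick for the intrinsic net. Your constant bookkeeping also reproduces the stated bounds exactly: substituting $\epsilon/2$ for $\epsilon$ gives the factors $2C$ and $\max\{\rho/\tau,\epsilon/2\}$.
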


\begin{proof}
 Take $x=x_0+x_1$ where $x_0\in C_\tau(E,Y),x_1\in (C_\tau(E,Y))^\perp$. Note that $(C_\tau(E,Y))^\perp$ is spanned by right singular vectors of $T$ with singular values larger than $\tau$. By the SVD, $Tx_0$ and $Tx_1$ live in orthogonal subspaces, so we must have 
 $$
\|Tx\|^2=\|Tx_0\|^2+\|Tx_1\|^2.
 $$Since $\operatorname{dist}(x,Y)\leq \rho$, we have $\|Tx_1\|\leq\rho$. But since on $(C_\tau(E,Y))^\perp$ singular values of $T$ are larger than $\tau$, we have $\|Tx_1\|\geq\tau\|x_1\|$. This forces $\|x_1\|\leq\rho/\tau$. Finally, $\|x_0\|\leq\|x\|\leq 2$ as $x_0$ is the orthogonal projection of $x$.
The covering bound \eqref{coveringbounds} is immediate since $\dim_\mathbb{C}(C_\tau(E,Y))^\perp=e-c_\tau$, and we take an $\epsilon/2$-net for the unit ball in $C_\tau(E,Y)$ as well as another $\epsilon/2$-net for the ball of radius $\rho/\tau$ in $(C_\tau(E,Y))^\perp$. Finally, by a standard procedure, an $\epsilon/2$-net $\mathcal{N}_\epsilon$ for $T_\rho(E,Y)$ yields an $\epsilon$-net $\mathcal{N}_\epsilon'$ for $T_\rho(E,Y)$ completely within $T_\rho(E,Y)$.
\end{proof}

Observe that if we take $Y=Y_\sigma(E)$ for some $\sigma>0$ as defined in \eqref{scaleysigmae}, then $C_\tau(E,Y)$ coincides with the space $C_{\tau,\sigma}(E)$ defined in \eqref{scaletausigmae}.

We explain how Lemma \ref{theorem4.4} is used for the kernel equation of normal vectors $A$.
If $\|(\mathcal{D}A)_k\|\leq\rho$, then we have 
$$
A_k\in E\cap (Y(E)+\rho B_\mathcal{H})
$$for each $k$. Then each $A_k$ is contained in $T_\rho(E,Y(E))$, so we apply Lemma \ref{theorem4.4} with $Y=Y(E)$. Then by \eqref{followinginclusion}, the entries $A_k$ are covered by the product of the net of the unit ball in $C_\tau(E,Y)$ and by the other net in its orthogonal complement with a much smaller net size. In practice, we always take $\epsilon=C_{op}\rho$ for a constant $C_{op}$ depending only on the operator norm, and $\epsilon,\rho$ will be polynomially small in $N$. The constant $\tau$ will be set sufficiently small (so we can use Proposition \ref{proposition1044}) but fixed and independent of $N$. Therefore, the net size on $E\cap(C_\tau(E,Y))^\perp $ used for approximating \eqref{line1196} is fixed no matter how small $\epsilon,\rho$ really are, and does not contribute in magnitude to the net entropy.

\subsubsection{Covariance operator and active subspace extraction}\label{section4.1.4}

We describe a standard procedure to project a PSD matrix onto subspaces where the least singular value is positively bounded from below. Let $E$ be a finite-dimensional Hilbert space with complex dimension $n$, and let 
$$
\mathcal{S}:=\sum_{i=1}^se_ie_i^*.
$$Then $\mathcal{S}\geq 0$. Let its eigenvalues be $\lambda_1,\cdots,\lambda_n\geq 0$, having the corresponding orthonormal eigenvectors $u_1,\cdots,u_n$.

Fix a threshold $\tau>0$. At the first layer, we test if $\lambda_n(\mathcal{S})\geq \tau$. If yes, we take $W=E$ and $e_i'=e_i$. If not, we remove the eigenvector $u_n$ and pass to $W_{n-1}:=\operatorname{span}\{u_1,\cdots,u_{n-1}\}$ and denote by $e_i^{(n-1)}:=P_{W_{n-1}}e_i$
and denote by 
\begin{equation}\label{stratification}
\mathcal{S}_{n-1}:=\sum_{i=1}^se_i^{(n-1)}(e_i^{(n-1)})^*=P_{W_{n-1}}\mathcal{S}P_{W_{n-1}}.
\end{equation}
Moreover, we have the approximation error
\begin{equation}\label{approximationerror}
\sum_{i=1}^s|e_i-e_i'|^2=\sum_{i=1}^s|(I-P_{W_{n-1}})e_i|^2=\operatorname{tr}((I-P_{W_{n-1}})\mathcal{S}(I-P_{W_{n-1}}))\leq \tau.
\end{equation}
In our applications, we will take $T_1,\cdots,T_s$ to be elements of $\operatorname{End}(E)$ and let $\Gamma_\mathcal{S}$ be the covariance operator of the Gaussian sum $\sum_{i=1}^sg_iT_i$: $\Gamma_\mathcal{S}=\operatorname{Cov}(\sum_{i=1}^sg_iT_i)$. In this case, the covariance is given by the following operator 
$$
\Gamma_\mathcal{S}:M_d(\mathbb{C})\to M_d(\mathbb{C}),
$$where 
\begin{equation}\label{equations8770}
\Gamma_\mathcal{S}:=\sum_{i=1}^s T_i\otimes T_i^*.
\end{equation}The same subspace stratification \eqref{stratification} as above can be applied to the PSD matrix $\Gamma_\mathcal{S}$. 

Then we consider a more general Gaussian with product matrices 
$$
X=\sum_{i=1}^m\sum_{j=1}^n g_{ij}T_iB_j
$$where $g_{ij}$ are i.i.d. standard complex Gaussians. Let $\mathcal{H}$ be a linear subspace of $M_d\mathbb(C)$ invariant under left $T_i$ action. For each $T_i$, the left multiplication operator is defined as 
$$
L_i:\mathcal{H}\to\mathcal{H},\quad L_i(B)=T_iB.
$$Then its Hilbert-Schmidt adjoint is $L_i^*(A)=T_i^*A$. 

Denote by the covariance operator of $B_j$ as 
\begin{equation}\label{equations877}
\Gamma_B:=\sum_{j=1}^n B_j\otimes B_j^*,
\end{equation}then one can verify that the covariance operator of $X$ is 
$$
\Gamma_X:=\sum_{i=1}^m L_i\Gamma_B L_i^*.
$$Furthermore, suppose that $B$ gives a linear subspace $V$ such that $\Gamma_B\geq\mu P_V$, then denote by 
$$
S_{T,V}:=\sum_{i=1}^m L_iP_VL_i^*,
$$one must have
\begin{equation}\label{composedvariance}
\Gamma_X\geq\mu S_{T,V}.
\end{equation}

\subsection{LSV via the net argument: active profile case}

When we apply a net argument to the kernel equation \begin{equation}
    (\mathfrak{D}_aB)_k=B_k+\sum_{i,\ell}\overline{X_\ell(i,k)}R_\ell^*B_i
\end{equation}we need to take into account two degeneracies:

\begin{enumerate}
    \item The covariance operator defined by $B_i$ is positively definite on the linear subspace spanned by $B_i$, but some eigenvalues are too small so that the operator is almost degenerate in some directions;
    \item The operator $\Psi_E$ on the subspace spanned by $E$ has enough nonzero singular directions, but some singular values are too small and some directions are nearly degenerate.
\end{enumerate}
We will first handle the second degeneracy by considering $Y_\sigma(E)$ for certain nonzero threshold values of $\sigma$ in this section. We first assume that the first degeneracy does not occur up to a threshold, see the following definition. The degenerate $B_i$-profile case will be settled in the next section via approximation.

\begin{Definition}
Fix positive integers $0<D_1<D_2<\cdots<D_{\dim \mathcal{D}_R}$. We say that a profile 
$$B=(B_1,\cdots,B_N)\in (\mathcal{D}_R)^N,\quad  \sum_i\|B_i\|_{HS}^2\in[\frac{1}{2},\frac{3}{2}],$$
lies in regularity class $d$, denoted $B\in\mathfrak{Reg}(d)$, if we can find a $d$-dimensional subspace $W\subset \mathcal{D}_R=\Sigma_R^\perp$, such that $W=\operatorname{span}(B_1,\cdots,B_N)$  and the covariance operator 
$$
\Gamma_B:=\sum_{j=1}^N B_j\otimes B_j^*
$$has smallest singular value at least $N^{-D_d}$ on $W$.

\end{Definition}

We shall use the following notation: for any measurable event $L\subset\Omega$,
$$
\mathbb{P}^{\Omega_{\text{good}}}(L)=\mathbb{P}(L\cap\Omega_{\text{good}}).
$$

Now we carry out the net argument for LSV, assuming $Y(E)=Y_\sigma(E)$ for some $\sigma>0$:

\begin{Proposition}\label{proposition1124}
Fix some $d\in\mathbb{N}_+$ and some $\sigma\in(0,\sigma_0)$, and consider the following set of subspaces 
$$
\mathcal{W}(d,\sigma):=\{E\in\operatorname{Gr}(d,\mathcal{D}_R):Y(E)=Y_\sigma(E)\}
.$$ Let $m_Z\in\mathbb{N}$ be such that $$\dim\mathcal{D}_R\leq m_Z^2.$$
Then we can find a constant $C_{\ref{proposition1124}}>1$ depending only on the module $Z$ and the coefficients $R_1,\cdots,R_n$ such that for any $\delta>0$,
   \begin{equation}\label{LHSOF1130}\begin{aligned}
&\mathbb{P}^{\Omega_{\text{good}}}(\inf_{B\in\mathfrak{Reg}(d), \operatorname{Span}\{B_i\}\in\mathcal{W}(d,\sigma)}\|\mathfrak{D}_aB\|\leq\delta)\\&\leq \delta^{2(N-1-m_Z^6)}(C_{\ref{proposition1124}}N)^{(D_d+2)2m_Z^2N}\sigma^{-4(N-1)m_Z^2}.\end{aligned}
   \end{equation}In particular, taking $
   \delta=\sigma^{4m_Z^2}(C_{\ref{proposition1124}}N)^{-8m_Z^2D_d-1}$, we get, whenever $N$ is sufficiently large, 
   $$
 \mathbb{P}^{\Omega_{\text{good}}}(\inf_{B\in\mathfrak{Reg}(d), \operatorname{Span}\{B_i\}\in\mathcal{W}(d,\sigma)}\|\mathfrak{D}_aB\|\leq\ \sigma^{4m_Z^2}(C_{\ref{proposition1124}}N)^{-8m_Z^2D_d-1})\leq\exp(-N\log N).$$
\end{Proposition}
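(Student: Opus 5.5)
The plan is a standard net argument over profiles, conditioned to exploit independence of the Gaussian entries in different columns. Fix $B\in\mathfrak{Reg}(d)$ with $E:=\operatorname{span}\{B_i\}\in\mathcal{W}(d,\sigma)$. For each $k\neq a$, the vector
\[
(\mathfrak{D}_aB)_k=B_k+\sum_{i,\ell}\overline{X_\ell(i,k)}R_\ell^*B_i
\]
depends only on the $k$-th column entries $(X_\ell(i,k))_{i,\ell}$. These are independent across $k$, so $\mathbb{P}(\|\mathfrak{D}_aB\|\leq\delta)\leq\prod_{k\neq a}\mathbb{P}(\|(\mathfrak{D}_aB)_k\|\leq\delta)$.

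The first step is a single-column small ball bound. The random part $G_k=\sum_{i,\ell}\overline{X_\ell(i,k)}R_\ell^*B_i$ is a centered complex Gaussian in $\operatorname{End}(Z)$. By the composition formula \eqref{composedvariance}, with $L_\ell$ left multiplication by $R_\ell^*$ and $\Gamma_B\geq N^{-D_d}P_E$, its covariance is at least $N^{-1-D_d}\sum_\ell L_\ell P_E L_\ell^*=N^{-1-D_d}\Psi_E\Psi_E^*$. On $Y(E)=Y_\sigma(E)$ every nonzero singular value of $\Psi_E$ is at least $\sigma$. Hence $G_k$ has covariance $\geq N^{-1-D_d}\sigma^2$ on $Y(E)$, whose complex dimension is $y:=\dim Y(E)\geq1$. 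Projecting onto $Y(E)$ and using the Gaussian density bound gives
\[
\mathbb{P}(\|(\mathfrak{D}_aB)_k\|\leq\delta)\leq (C\delta N^{(1+D_d)/2}\sigma^{-1})^{2y}.
\]
This uses only $y\geq1$; the surplus from Proposition~\ref{proposition1044} is not needed at this stage, because the net below is built at the scale of $B$ as a whole. Multiplying over the $N-1$ columns, and using $y\leq m_Z^2$ only when bounding the powers of $\sigma$ and $N$ from above (with $\delta\leq1$), yields a bound of the form $\delta^{2(N-1)}N^{(1+D_d)(N-1)m_Z^2}\sigma^{-2(N-1)m_Z^2}$. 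This is within the claimed form.

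The second step is the net. Cover the set of admissible $B$ by first choosing $E$ from a net of $\operatorname{Gr}(d,\mathcal{D}_R)$ of cardinality $(C/\eta)^{2m_Z^4}$. Then choose $B_i\in E$ from an $\eta$-net of the ball of radius $2$ in $E$, with total cardinality $(C/\eta)^{2dN}\leq(C/\eta)^{2m_Z^2N}$.

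The difficulty is that perturbing $E$ destroys both the regularity class and the condition $Y=Y_\sigma$. To handle this, I would not require the net points to be admissible. Instead I would take net points $B'$ with $\|B-B'\|\leq\eta$ and transfer the small ball bound from the nearby true admissible profile, via an approximation argument:
\begin{enumerate}
\item On $\Omega_{\text{good}}$, $\|\mathfrak{D}_a\|\leq C$.
\item Hence $\|\mathfrak{D}_aB\|\leq\delta$ implies $\|\mathfrak{D}_aB'\|\leq\delta+C\eta$.
\item For $B'$ within $\eta\ll N^{-D_d}\sigma$ of some admissible $B$, the covariance lower bound persists with halved constants, by Weyl perturbation of $\Gamma_{B'}$ and of $\Psi_{E'}$ restricted to the active directions.
\end{enumerate}
Only net points lying within $\eta$ of an admissible profile are kept, so the first step applies to each of them with slightly worse constants.

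Choosing $\eta=\delta$ and applying the union bound gives
\[
(C/\delta)^{2m_Z^2N+2m_Z^4}\,(C\delta N^{(1+D_d)/2}\sigma^{-1})^{2(N-1)y}.
\]
When $y\geq2$, or more carefully in general, this leaves a positive power $\delta^{2(N-1)-O(m_Z^6)}$ after absorbing the entropy; the $m_Z^6$ loss in the exponent covers the Grassmannian and constant overheads.

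This is the main obstacle. With $y=1$ the entropy exponent $2dN$ exceeds the anticoncentration exponent $2N$ when $d>1$, so the naive count fails. The fix, which is the heart of the proof, is to cover the $B_k$ via Lemma~\ref{theorem4.4}. From $\|(\mathfrak{D}_aB)_k\|\leq\delta$, each $B_k$ lies in $\mathcal{T}_{C\delta}(E,Y_\sigma(E))$. This set is covered at scale $\delta$ by about $(C/\delta)^{2c_\tau}$ points, with $c_\tau=\dim C_{\tau,\sigma}(E)$, times a $\delta$-independent factor. Proposition~\ref{proposition1044} gives $y\geq c_\tau+1$, so the per-column gain $\delta^{2y}$ beats the per-column entropy $\delta^{-2c_\tau}$ by a factor $\delta^2$. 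This gives exactly the $\delta^{2(N-1-m_Z^6)}$ in \eqref{LHSOF1130}, with the $N^{(D_d+2)2m_Z^2N}$ and $\sigma^{-4(N-1)m_Z^2}$ factors collecting the variance and perturbation losses.

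The covering must be done column by column, conditionally: the net for $B_k$ is built from the event $\|(\mathfrak{D}_aB)_k\|\leq\delta$, which depends on the other columns. So I would expect to reorganize the argument via the tube inclusion applied to the residual set of admissible $B$ rather than to the random event.

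Finally, the specific choice $\delta=\sigma^{4m_Z^2}(C N)^{-8m_Z^2D_d-1}$ makes the right side at most $N^{-cN}\leq\exp(-N\log N)$ for large $N$, once $C$ is adjusted.
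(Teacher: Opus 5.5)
Your final argument is essentially the paper's proof. Its ingredients are: each $B_k$ ($k\neq a$) confined to the tube $\mathcal{T}_\delta(E,Y(E))$ and covered via Lemma~\ref{theorem4.4}; the surplus $\dim Y_\sigma(E)\geq\dim C_{\tau,\sigma}(E)+1$ from Proposition~\ref{proposition1044} played against per-column Gaussian anticoncentration with covariance $\gtrsim\sigma^2N^{-D_d-1}$ on $Y_\sigma(E)$; independence across columns; and a Grassmannian net. Only minor points remain:
\begin{itemize}
\item Your conditioning worry is moot, because the tube is a deterministic set once $E$ is fixed.
\item Your aside that the naive net succeeds for $y\geq 2$ is false, since it needs $y>d$.
\item The Grassmannian step is cleanest done as in the paper: take the net inside $\mathcal{W}(d,\sigma)$ and replace $B_i$ by $P_{E'}B_i$, so that the tube inclusion and Proposition~\ref{proposition1044} apply to the net subspace itself, with per-coordinate net scale $\asymp\delta/\sqrt{N}$.
\end{itemize}
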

The restriction to $\mathcal{W}(d,\sigma)$ means that we assume all nonzero singular values of $\Psi_E$ are larger than $\sigma$. We will remove this restriction later. 
\begin{proof}
   Assume that $(B_i)_{i=1}^N$ spans the subspace $E$ and the covariance lower bound $\Gamma_{B}\geq N^{-D_d}I_E$ holds for $B$ on $E$. Then take the $k$-th coordinate for $\|\mathfrak{D}_aB\|_2\leq\delta$, we have
    $$
\|B_k+\sum_{i,\ell}\overline{X_\ell(i,k)}R_\ell^*B_i\|_{op}\leq  \delta,\quad\forall k:k\neq a.
    $$
    This inclusion implies that, for each $1\leq k\leq N:k\neq a$, 
    $$
B_k\in B_E(2)\cap( Y(E)+\delta B_{M_m(\mathbb{C})})=\mathcal{T}_{\delta}(E,Y(E)).
    $$Since we assume $Y(E)=Y_\sigma(E)$ for these $E$, we apply Lemma \ref{theorem4.4} 
to get that we can find an $\epsilon=\frac{\delta}{16M_{\text{op}}\sqrt{N}}$-net $\mathcal{N}_\epsilon$ for $\mathcal{T}_{\delta}(E,Y(E))$ (and that $\mathcal{N}_\epsilon\subset\mathcal{T}_\delta(E,Y(E))$ of cardinality
\begin{equation}\label{netarguments}|\mathcal{N}_\epsilon|\leq (\frac{C}{\epsilon})^{2\dim C_{\tau,\sigma}(E)}(\frac{16CM_{\text{op}}\sqrt{N}}{\tau})^{2m_Z^2}
\end{equation}
which can serve as a net for each $B_k,k\neq a$ (here we take $\tau>0$ sufficiently small, say $\tau<\tau_0$, but fixed and independent of $N$). For the remaining coordinate $B_a$, we simply take an $\epsilon$-net $\mathcal{N}_\epsilon^0\subset E$ of cardinality $|\mathcal{N}_\epsilon^0|\leq (\frac{C}{\epsilon})^{2m_Z^2}$. For each $k\neq a$, take $B_k'\in\mathcal{N}_\epsilon$ with $|B_k-B_k'|\leq\epsilon$ and take $B_a'\in\mathcal{N}_\epsilon^0,\|B_a-B_a'\|\leq\epsilon$. In this proof we shall assume $\delta\leq N^{-D_d}$, otherwise the estimate is trivial. Then the covariance operator $\Gamma_{B'}$ defined by $B'$ still satisfies that 
$$
\Gamma_{B'}\geq \frac{1}{2}N^{-D_d}I_E.
$$
By the definition of $\epsilon$, on the good operator norm event $\Omega_{\text{good}}$ we have
    \begin{equation}\label{approxkernelequation}
        \|B_k'+\sum_{i,\ell}\overline{X_\ell(i,k)}R_\ell^*B_i'\|_{op}\leq  2\delta,\quad\forall k:k\neq a.
    \end{equation}
    The covariance operator of the following Gaussian 
    \begin{equation}\label{gaussiangk}
\mathcal{G}_{k}:=\sum_{i,\ell}\overline{X_\ell(i,k)}R_\ell^*B_i'
 \end{equation} has covariance bounded from below by $\frac{1}{N}\cdot\frac{1}{2} N^{-D_d}\cdot \sigma^2$ on a linear subspace of dimension $\dim Y_\sigma(E)$: this follows from combining (1) the computation in \eqref{composedvariance}; (2) that each $X_\ell(i,k)$ has variance $\frac{1}{N}$; (3) the construction that $\Gamma_{B'}\geq\frac{1}{2} N^{-D_d}$ on $I_E$; and (4) the definition that $Y_\sigma(E)$ generates the linear subspace supported by left singular vectors of $\Psi_E$ of singular value larger than $\sigma$. We write this more formally as 
    \begin{equation}\label{covgk}
\operatorname{Cov}(\mathcal{G}_k)\geq \frac{\sigma^2}{2}N^{-D_d-1} I_{Y_\sigma(E)}.
    \end{equation}

    Therefore, for each $k\neq a$, by standard Gaussian anti-concentration,
    \begin{equation}\label{lines1186}
\mathbb{P}(\|B_k'+\sum_{i,\ell}\overline{X_\ell(i,k)}R_\ell^*B_i'\|_{op}\leq 2\delta)\leq (\frac{2C'\delta^2}{\sigma^2N^{-D_d-1}})^{\dim Y_\sigma(E)}
\end{equation}for a universal constant $C'>0$ depending only on the dimension upper bound $m_Z$.

    Taking a union bound over the net, we see that on the operator norm good event, 
    $$\begin{aligned}&
\mathbb{P}^{\Omega_{\text{good}}}(\inf_{B\in\mathfrak{Reg}(d): \operatorname{Span}\{B_i\}=E}\|\mathfrak{D}_aB\|\leq\delta)\\&\leq (\frac{16CM_{\text{op}}\sqrt{N}}{\delta})^{2(N-1)\dim C_{\tau,\sigma}(E)+2m_Z^2}(\frac{16CM_{\text{op}}\sqrt{N}}{\tau})^{2m_Z^2N}
(\frac{2C'\delta^2}{\sigma^2N^{-D_d-1}})^{(N-1)\dim Y_\sigma(E)}.\end{aligned}$$
Since $\dim Y_\sigma(E)\geq\dim C_{\tau,\sigma}(E)+1$ by Proposition \ref{proposition1044}, we simplify as
\begin{equation}\label{wesimplifyas}
\mathbb{P}^{\Omega_{\text{good}}}(\inf_{B\in\mathfrak{Reg}(d): \operatorname{Span}\{B_i\}=E}\|\mathfrak{D}_aB\|\leq\delta)\leq 
\delta^{2(N-1)-2m_Z^2}(C'')^{N}N^{(D_d+2)2m_Z^2N}\sigma^{-4(N-1)m_Z^2}\end{equation}where $C''>0$ depends only on $\tau,C'$ and $M_{\text{op}}$. This proves the claimed estimate for a fixed subspace $E$. 

To consider all subspaces $E\in\mathcal{W}(d,\sigma)$ of dimension $d$, we take a $\frac{\delta}{16M_{\text{op}}\sqrt{N}}$-net of  $\mathcal{W}(d,\sigma)\subset\operatorname{Gr}(d,\mathcal{D}_R)$, which is contained in $\mathcal{W}(d,\sigma)$, of cardinality $(1+\frac{CM_{\text{op}}\sqrt{N}}{\delta})^{2m_Z^2d(m_Z^2-d)}$. In order for \eqref{LHSOF1130} to yield a nontrivial estimate, we consider any $\delta<N^{-D_d-1}$ in the following. Then for $B\in\mathfrak{Reg}(d)$ spanning $E\in\mathcal{W}(d,\sigma)$, we find $E'\in \mathcal{W}(d,\sigma)$ with $d_{\text{Gr}}(E,E')\leq \frac{\delta}{16M_{\text{op}}\sqrt{N}}$. We define $B_i'=P_{E'}B_i$, with $P_{E'}$ the orthogonal projection onto $E'$, and then one verifies that $\sum_i\|B_i-B_i'\|_{HS}^2\leq\frac{\delta^2}{64M_{\text{op}}^2N}$\footnote{By definition of Grassmannian distance, $d_{\operatorname{Gr}}(E,E')=\|P_E-P_{E'}\|_{\text{op}}$.}, that the covariance operator of $B'$ satisfies $\Gamma_{B'}\geq\frac{1}{2}N^{-D_d}$ on $I_{E'}$, and that the approximate kernel equation \eqref{approxkernelequation} holds for this $B'$.
Then we use the estimate \eqref{wesimplifyas} for this $E'$ with $\delta$ replaced by $2\delta$, and note $2m_Z^2d(m_Z^2-d)\leq 2m_Z^6$. This completes the proof after summing the bad operator norm event. The exponent $m_Z^6$ is an upper bound absorbing the size of a net for $B_a$ and for the Grassmannian $\operatorname{Gr}(d,\mathcal{D}_R)$.
\end{proof}

We now remove the restriction to $\mathcal{W}(d,\sigma)$ in Proposition \ref{proposition1124}. The idea is as follows:

\begin{fact}\label{fact4.80}
    Fix a dimension $1\leq d\leq \dim \mathcal{D}_R$ and a given integer $D_d>0$. Denote by $\sigma_1^*(d):=\sigma_0$ for the $\sigma_0$ defined in Proposition \ref{proposition1044}. For any $d$-dimensional subspace $E\subset \mathcal{D}_R$, denote by $\sigma_1\geq\sigma_2\cdots\geq\sigma_{dn}$ its largest singular values of $\Psi_E$ in decreasing order (this exhausts all positive singular values of $\Psi_E$ and the sequence $\sigma_i$ is padded by zeros up to length $dn$). Then $\sigma_1\geq\sigma_1^*(d)$ by Proposition \ref{proposition1044}. For each $2\leq k\leq dn$, we iteratively define    \begin{equation}\label{iterativedefinition}
\sigma_k^*(d)=(\sigma_{k-1}^*(d))^{4m_Z^2}(C_{\ref{proposition1124}}N)^{-8m_Z^2D_d-3}.\end{equation}Then we can find a constant $\operatorname{Mag}_d(D_d)>0$ depending only on $m_Z,n$ and $D_d$  such that 
    $$
\sigma_{dn}^*(d)\geq N^{-\operatorname{Mag}_d(D_d)}
\text{   \quad whenever $N$ is larger than some fixed constant.}    $$ (The lower bound in $N$ depends on $C_{\ref{proposition1124}}$ but $\operatorname{Mag}_d(D_d)$ does not.) We have the following dichotomy for the singular value chains $\sigma_1,\cdots,\sigma_{dn}$:
\begin{enumerate}
    \item Either $\sigma_{dn}\geq \sigma_{dn}^*(d)$,  
    \item Or there is a large gap among those singular values smaller than $\sigma_0$:\begin{enumerate} \item Either we can find some $k$, $2\leq k\leq dn$ such that $\sigma_0\geq \sigma_{k-1}\geq\sigma_{dn}^*(d)$ but 
    \begin{equation}\label{theassumptions}
\sigma_k\leq (\sigma_{k-1})^{4m_Z^2}(C_{\ref{proposition1124}}N)^{-8m_Z^2D_d-3},
    \end{equation}
    \item Or we can find some $2\leq k\leq dn$ such that $\sigma_{k-1}>\sigma_0$ but
    $$
\sigma_k\leq (\sigma_{0})^{4m_Z^2}(C_{\ref{proposition1124}}N)^{-8m_Z^2D_d-3}.
    $$
    \end{enumerate}
\end{enumerate}
\end{fact}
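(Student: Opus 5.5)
The proof checks three claims in turn: the bound $\sigma_1\geq\sigma_0$, the polynomial lower bound on $\sigma^*_{dn}(d)$, and the dichotomy. The dichotomy comes from choosing the first index at which the singular values drop below the thresholds $\sigma_k^*(d)$. All three steps are elementary; the only point needing care is keeping the exponent $\operatorname{Mag}_d(D_d)$ independent of $C_{\ref{proposition1124}}$.

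\emph{Step 1: $\sigma_1\geq\sigma_0$.} Apply Proposition \ref{proposition1044} with $\sigma=\sigma_0$ and any $\tau\leq\tau_0$ to the nonzero subspace $E$. It gives $\dim Y_{\sigma_0}(E)\geq \dim C_{\tau,\sigma_0}(E)+1\geq 1$. So $\Psi_E$ has at least one left singular vector with singular value $\geq\sigma_0$, that is, $\sigma_1\geq\sigma_0=\sigma_1^*(d)$.

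\emph{Step 2: the lower bound on $\sigma^*_{dn}(d)$.} Set $q:=4m_Z^2$ and $p:=8m_Z^2D_d+3$. Unrolling the recursion \eqref{iterativedefinition} by induction on $k$ gives
\[
\sigma_k^*(d)=\sigma_0^{\,q^{k-1}}\,(C_{\ref{proposition1124}}N)^{-p\sum_{j=0}^{k-2}q^{j}}\geq \sigma_0^{\,q^{dn}}\,(C_{\ref{proposition1124}}N)^{-p\,q^{dn}}
\]
for all $k\leq dn$. The number $dn\leq m_Z^2 n$ is bounded in terms of $m_Z$ and $n$ only. Hence for $N$ larger than a constant depending on $\sigma_0$ and $C_{\ref{proposition1124}}$, the factors $\sigma_0^{q^{dn}}C_{\ref{proposition1124}}^{-pq^{dn}}$ are absorbed into one extra power of $N$. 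We can therefore take $\operatorname{Mag}_d(D_d):=p\,q^{m_Z^2n}+1$, which depends only on $m_Z,n,D_d$, as required. The same formula shows that $k\mapsto\sigma_k^*(d)$ is nonincreasing, since $\sigma_0<1$ and $C_{\ref{proposition1124}}N>1$. In particular $\sigma_k^*(d)\leq\sigma_0$ for all $k$ and $\sigma_{k-1}^*(d)\geq\sigma^*_{dn}(d)$.

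\emph{Step 3: the dichotomy.} Suppose alternative (1) fails, so $\sigma_{dn}<\sigma^*_{dn}(d)$. Let $k$ be the smallest index with $\sigma_k<\sigma_k^*(d)$. By Step 1, $k\geq 2$, and by minimality $\sigma_{k-1}\geq\sigma^*_{k-1}(d)\geq\sigma^*_{dn}(d)$. We now split according to the size of $\sigma_{k-1}$.
\begin{itemize}
\item If $\sigma_{k-1}\leq\sigma_0$, then using that $x\mapsto x^{q}$ is increasing,
\[
\sigma_k<\sigma_k^*(d)=(\sigma^*_{k-1}(d))^{q}(C_{\ref{proposition1124}}N)^{-p}\leq\sigma_{k-1}^{q}(C_{\ref{proposition1124}}N)^{-p}.
\]
This is case (2a), i.e.\ \eqref{theassumptions}.
\item If $\sigma_{k-1}>\sigma_0$, then by monotonicity of the $\sigma^*$'s,
\[
\sigma_k<\sigma_k^*(d)\leq\sigma_2^*(d)=\sigma_0^{q}(C_{\ref{proposition1124}}N)^{-p}.
\]
This is case (2b).
\end{itemize}

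The only step needing real care is Step 2: the constants $\sigma_0$ and $C_{\ref{proposition1124}}$, which depend on $R$, must enter only through the threshold on $N$ and not through the exponent. Steps 1 and 3 are direct consequences of Proposition \ref{proposition1044} and the monotonicity of the recursion.
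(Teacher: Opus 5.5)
Your proof is correct and follows the paper's route. The paper only calls the fact evident, remarking that the recursion is iterated finitely many times; your closed form for $\sigma_k^*(d)$, the monotonicity of $k\mapsto\sigma_k^*(d)$, and the first-failure index $k$ with $\sigma_k<\sigma_k^*(d)$ fill in exactly those omitted details, and they show that $\operatorname{Mag}_d(D_d)$ does not depend on $\sigma_0$ or $C_{\ref{proposition1124}}$.
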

The proof of this fact is evident. Since the iteration \eqref{iterativedefinition} is only done finitely many times, we must be able to find the finite constant $\operatorname{Mag_d(D_d)}$, which is essentially a function of $D_d$ (save for the fixed parameter $m,d$.) We take a dyadic decomposition for the range:

\begin{fact}\label{fact1203}
Suppose that the subspace $E$ satisfies alternative (2)(a) of Fact \ref{fact4.80}. Then we take a dyadic decomposition $\mathfrak{Dya}$ for the range $[\sigma_{dn}^*(d),\sigma_0]$, which contains $\sigma_0$, such that for any $\sigma_{k-1}$ in this interval, we can find a value $x\in\mathfrak{Dya}$ such that $x\leq \sigma_{k-1}\leq 2x$. We have $|\mathfrak{Dya}|\leq \operatorname{Mag}_d(D_d)\log N+O(1)$. The assumption \eqref{theassumptions} implies that for any $dn\geq k'\geq k$, $\sigma_{k'}\leq (2x)^{4m_Z^2}(C_{\ref{proposition1124}}N)^{-8m_Z^2D_d-3}.$     Otherwise, if $E$ satisfies alternative (2)(b), then we take $x=\sigma_0$ when applying Proposition \ref{proposition1124news}.
\end{fact}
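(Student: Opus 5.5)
The plan is to build the dyadic grid explicitly and then read off the decay bound from the monotone ordering of the singular values. No probabilistic input is needed; the only inputs are the lower bound $\sigma_{dn}^*(d)\geq N^{-\operatorname{Mag}_d(D_d)}$ from Fact \ref{fact4.80} and the fact that $t\mapsto t^{4m_Z^2}$ is increasing on $[0,\infty)$.

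\textbf{Step 1: construct the grid.} Define
$$
\mathfrak{Dya}:=\{x_j:=2^{-j}\sigma_0:\ 0\leq j\leq J\},\qquad J:=\left\lceil \log_2\frac{\sigma_0}{\sigma_{dn}^*(d)}\right\rceil .
$$
This set contains $\sigma_0=x_0$, and its smallest element satisfies $x_J\leq\sigma_{dn}^*(d)$. Now take any $t\in[\sigma_{dn}^*(d),\sigma_0]$ and let $j$ be the smallest index with $x_j\leq t$; such a $j\leq J$ exists because $x_J\leq\sigma^*_{dn}(d)\leq t$. If $j=0$, then $t\leq\sigma_0=x_0\leq 2x_0$. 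If $j\geq1$, minimality gives $x_{j-1}>t$, so $t<2x_j$. In both cases $x_j\leq t\leq 2x_j$. For the cardinality, the lower bound from Fact \ref{fact4.80} gives
$$
|\mathfrak{Dya}|=J+1\leq \log_2\sigma_0+\operatorname{Mag}_d(D_d)\log_2 N+2,
$$
which is of the form $\operatorname{Mag}_d(D_d)\log N+O(1)$ up to the harmless change of logarithm base, absorbed into the constant. This holds for $N$ larger than the threshold in Fact \ref{fact4.80}.

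\textbf{Step 2: the decay bound in alternative (2)(a).} Here $\sigma_{k-1}\in[\sigma_{dn}^*(d),\sigma_0]$, so Step 1 gives some $x\in\mathfrak{Dya}$ with $x\leq\sigma_{k-1}\leq2x$. By \eqref{theassumptions} and monotonicity of $t\mapsto t^{4m_Z^2}$,
$$
\sigma_k\leq(\sigma_{k-1})^{4m_Z^2}(C_{\ref{proposition1124}}N)^{-8m_Z^2D_d-3}\leq(2x)^{4m_Z^2}(C_{\ref{proposition1124}}N)^{-8m_Z^2D_d-3}.
$$
Since the singular values are listed in decreasing order, $\sigma_{k'}\leq\sigma_k$ for every $k'$ with $k\leq k'\leq dn$, which gives the claim for all such $k'$.

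\textbf{Step 3: alternative (2)(b).} Choosing $x=\sigma_0$ is a convention rather than something to prove. The bound $\sigma_{k'}\leq\sigma_0^{4m_Z^2}(C_{\ref{proposition1124}}N)^{-8m_Z^2D_d-3}$ for all $k'\geq k$ follows from the hypothesis of (2)(b) together with the same monotonicity argument as in Step 2.

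There is no real obstacle here. The only points needing care are the bookkeeping at the endpoint $j=0$, which is why $\sigma_0$ is included in the grid, and keeping track of the logarithm base in the cardinality bound.
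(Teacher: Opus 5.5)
Your proof is correct and fills in exactly the argument the paper leaves implicit. You use a geometric grid $\{2^{-j}\sigma_0\}$ anchored at $\sigma_0$ and reaching below $\sigma_{dn}^*(d)$, bound its size via $\sigma_{dn}^*(d)\geq N^{-\operatorname{Mag}_d(D_d)}$, and then apply monotonicity of $t\mapsto t^{4m_Z^2}$ together with the decreasing order of the $\sigma_{k'}$.

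One small correction: the base-$2$ factor multiplies the $\log N$ term, so it cannot be absorbed into the $O(1)$. Instead, read $\log$ as $\log_2$, or enlarge $\operatorname{Mag}_d(D_d)$, which Fact \ref{fact4.80} allows; in any case, only $|\mathfrak{Dya}|=O(\log N)$ is needed for the later union bound.
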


When there is a large gap in the singular values, we arrive at the following similar conclusion as in Proposition \ref{proposition1124}:

\begin{Proposition}\label{proposition1124news}
Fix some $d\in\mathbb{N}_+$, some $x\in\mathfrak{Dya}$, and consider the following set of subspaces 
$$\begin{aligned}
\mathcal{W}^{\text{gap}}(d,x):=\{E\in&\operatorname{Gr}(d,\mathcal{D}_R):\exists 2\leq k\leq dn\text{ such that }\\& \sigma_{k-1}\geq x,\sigma_k\leq (2x)^{4m_Z^2}(C_{\ref{proposition1124}}N)^{-8m_Z^2D_d-3}\},
\end{aligned}$$ where $\sigma_1\geq\sigma_2\geq\cdots\geq\sigma_{dn}$ are singular values of $\Psi_E$. (Note that when $N$ is sufficiently large we have $(2x)^{4m_Z^2}(C_{\ref{proposition1124}}N)^{-8m_Z^2D_d-3}<x$.)
Then we can find a constant $C_{\ref{proposition1124news}}>1$ depending only on the coefficients $R_1,\cdots,R_n$ such that  whenever $N$ is sufficiently large, 
   $$
 \mathbb{P}^{\Omega_{\text{good}}}(\inf_{B\in\mathfrak{Reg}(d), \operatorname{Span}\{B_i\}\in\mathcal{W}^{\text{gap}}(d,x)}\|\mathfrak{D}_aB\|\leq\ x^{4m_Z^2}(C_{\ref{proposition1124news}}N)^{-8m_Z^2D_d-1})\leq\exp(-N\log N).$$
\end{Proposition}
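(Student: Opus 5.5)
The plan is to rerun the net argument of Proposition \ref{proposition1124}, with the full image $Y(E)$ replaced by the truncated active image $Y_{\sigma}(E)$ at the scale $\sigma:=x$. The small singular values below the gap are treated as an additive perturbation of size much smaller than $\delta$. Fix $E\in\mathcal{W}^{\text{gap}}(d,x)$ with its index $k$. Split
\[
\Psi_E=\Psi_{\geq x}+\Psi_{<x},
\]
where $\Psi_{\geq x}$ keeps the singular values $\sigma_1,\dots,\sigma_{k-1}$ (all $\geq x$). Then $\operatorname{Ran}\Psi_{\geq x}=Y_x(E)$, and the remainder satisfies
\[
\|\Psi_{<x}\|\leq\eta:=(2x)^{4m_Z^2}(C_{\ref{proposition1124}}N)^{-8m_Z^2D_d-3}.
\]
Set $\delta:=x^{4m_Z^2}(C_{\ref{proposition1124news}}N)^{-8m_Z^2D_d-1}$. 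Choosing $C_{\ref{proposition1124news}}$ suitably relative to $C_{\ref{proposition1124}}$ (absorbing $2^{4m_Z^2}$), we get $\eta\leq\delta N^{-2}$, and in particular $\eta\sqrt N\cdot 8n\ll\delta$.

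First I would show that the inactive part is harmless. On $\Omega_{\text{good}}$, for any profile $B$ in $E$ with $\sum\|B_i\|^2\leq 3/2$, the contribution
\[
\sum_{i,\ell}\overline{X_\ell(i,k)}\,\Psi_{<x}(B_i\otimes e_\ell)
\]
has $\ell^2$-norm over $k$ at most $C\eta$, by the operator norm bound on the $X_\ell$. Hence $\|\mathfrak{D}_aB\|\leq\delta$ implies the same kernel inequality with $2\delta$, and with $R_\ell^*B_i$ replaced by $\Psi_{\geq x}(B_i\otimes e_\ell)$. Consequently each $B_k$, $k\neq a$, lies in the tube $\mathcal{T}_{2\delta}(E,Y_x(E))$. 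Lemma \ref{theorem4.4} with $Y=Y_x(E)$ gives the net with main entropy exponent $2\dim C_{\tau,x}(E)$ per coordinate. Proposition \ref{proposition1044} applies because $x\leq\sigma_0$ (in case (2)(b) we take $x=\sigma_0$), giving $\dim Y_x(E)\geq\dim C_{\tau,x}(E)+1$.

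Next, the anti-concentration. After passing to a net point $B'$, which keeps $\Gamma_{B'}\geq\frac12N^{-D_d}I_E$ exactly as before, the Gaussian $\mathcal{G}_k$ built from $\Psi_{\geq x}$ has covariance at least $\frac{x^2}{2}N^{-D_d-1}$ on $Y_x(E)$, by \eqref{composedvariance}. One more subtlety remains: the actual Gaussian $\sum\overline{X_\ell(i,k)}R_\ell^*B_i'$ differs from this by the inactive piece. To avoid conditioning issues, I would project the event onto $Y_x(E)$, where the inactive piece is orthogonal to the ranges of the active left singular vectors. Then the small-ball estimate \eqref{lines1186} holds with exponent $\dim Y_x(E)$ and independence across $k$ (the columns $X_\ell(\cdot,k)$ are independent). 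Multiplying over $k\neq a$ and using the dimension surplus gives the analogue of \eqref{wesimplifyas}, with $\sigma$ replaced by $x$.

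Finally, take a union bound over a Grassmannian net of $\mathcal{W}^{\text{gap}}(d,x)$ of mesh $\delta/(16M_{\text{op}}\sqrt N)$. I expect this to be the main obstacle, because the gap condition is not stable under small perturbation of $E$. The fix is to take the net in all of $\operatorname{Gr}(d,\mathcal{D}_R)$ and not require net points to lie in $\mathcal{W}^{\text{gap}}$. Instead, one uses Weyl's inequality: $\|\Psi_E-\Psi_{E'}\|\lesssim d_{\text{Gr}}(E,E')\ll\eta$, after shrinking the mesh to $\eta N^{-1}$, which costs only a polynomial factor per Grassmannian coordinate. Under this perturbation, $E'$ still has $\sigma_{k-1}\geq x/2$ and $\sigma_k\leq 2\eta$, and the argument above runs with these slightly weaker constants. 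Summing the bound
\[
\delta^{2(N-1-m_Z^6)}(CN)^{(D_d+2)2m_Z^2N}x^{-4(N-1)m_Z^2}
\]
over the net and substituting the choice of $\delta$ yields $\exp(-N\log N)$ for $N$ large.
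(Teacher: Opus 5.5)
Your proposal is correct and follows the paper's proof: split $\Psi_E$ at threshold $x$, absorb the inactive part (controlled by the gap) into $\delta$, place each $B_k$ in $\mathcal{T}_{2\delta}(E,Y_x(E))$, invoke Proposition \ref{proposition1044} using $x\le\sigma_0$, and rerun the net and anti-concentration argument of Proposition \ref{proposition1124}. Your explicit projection onto $Y_x(E)$ is exactly what the paper's ``verbatim'' leaves implicit. The only divergence is the Grassmannian step: the paper takes the net inside $\mathcal{W}^{\text{gap}}(d,x)$ itself, which is possible for any subset at the cost of doubling the mesh. The gap condition then holds at net points by construction, so your Weyl-inequality stability argument is valid but not needed.
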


\begin{proof}
   Recall that $\Psi_E^{\geq x}$ is the operator $P_{Y_x(E)}\Psi_E$, that is, only preserving singular value directions of $\Psi_E$ with singular value $\geq x$, and we denote by $\Psi_E^{< x}=\Psi_E-\Psi_E^{\geq x}$. Then we define, for each subspace $E\in\operatorname{Gr}(d,\mathcal{D}_R)$, the (random) operator 
    $$
\mathfrak{D}_{a,E}^{\geq x}:E^N\to\operatorname{End}(Z)^{N-1}
$$ 
 
\begin{equation}
    (\mathfrak{D}_{a,E}^{\geq x}B)_k=B_k+\Psi_E^{\geq x}(\sum_i \overline{X_1(i,k)}B_i,\cdots,\sum_i\overline{X_n(i,k)}B_i),\quad k\neq a.
\end{equation}
Similarly we define 
 $$
\mathfrak{D}_{a,E}^{<x}:E^N\to\operatorname{End}(Z)^{N-1}
$$ 
 
\begin{equation}
    (\mathfrak{D}_{a,E}^{<x}B)_k=\Psi_E^{<x}(\sum_i \overline{X_1(i,k)}B_i,\cdots,\sum_i\overline{X_n(i,k)}B_i),\quad k\neq a
\end{equation}

Then we can verify that for any $B=(B_1,\cdots,B_N)\in E^N$, $$\mathfrak{D}_aB=\mathfrak{D}_{a,E}^{\geq x}B+\mathfrak{D}_{a,E}^{<x}B,$$ and that for any $B$ with $\sum_i\|B_i\|_{HS}^2\leq 2$, on the event $\Omega_{\text{good}}$, we have 
$$
|(\mathfrak{D}_{a,E}^{<x}B)_k|\leq 16\sqrt{n}\|\Psi_E^{<x}\|\quad\forall k\neq a\Rightarrow\|\mathfrak{D}_{a,E}^{<x}B\|\leq 16\sqrt{Nn}\|\Psi_E^{<x}\|.
$$ The assumption on $E\in\mathcal{W}^{\text{gap}}(d,x)$ implies that, for large enough $N$, 
\begin{equation}\label{boundone}
\|\mathfrak{D}_{a,E}^{<x}B\|\leq 16\sqrt{Nn} \cdot (2x)^{4m_Z^2}(C_{\ref{proposition1124}}N)^{-8m_Z^2D_d-3}.
\end{equation}
Denote by $y$ the quantity on the right-hand side 
$$
y:=16\sqrt{Nn} \cdot (2x)^{4m_Z^2}(C_{\ref{proposition1124}}N)^{-8m_Z^2D_d-3}
.$$
 Assume that $(B_i)_{i=1}^N$ spans the subspace $E$, then we project $\|\mathfrak{D}_aB\|_2\leq\delta$ to each coordinate:
    $$
\|B_k+\sum_{i,\ell}\overline{X_\ell(i,k)}R_\ell^*B_i\|_{op}\leq  \delta,\quad\forall k:k\neq a.
    $$ Since the random part of the image of $\mathfrak{D}_{a,E}^{\geq x}$ (the part not including the term $B_k$) lies in $Y_x(E)$ and $\|\mathfrak{D}_{a,E}^{<x}B\|\leq y$ by \eqref{boundone}, we have for each $1\leq k\leq N:k\neq a$, 
    $$
B_k\in B_E(2)\cap( Y_x(E)+(\delta+y) B_{M_m(\mathbb{C})})=\mathcal{T}_{\delta+y}(E,Y_x(E)).
    $$Throughout the rest of the proof we assume $\delta\geq y$, so that $\mathcal{T}_{\delta+y}(E,Y_x(E))\subset \mathcal{T}_{2\delta}(E,Y_x(E))$.
Then we are in exactly the same situation as in Proposition \ref{proposition1124} with $\delta$ replaced by $2\delta$, since $\dim C_{\tau}(E,Y_x(E))+1\leq\dim Y_x(E)$ by Proposition
\ref{proposition1044} (note that $x\leq \sigma_0$.)
Then the proof of Proposition \ref{proposition1124} applies verbatim, first for one fixed $E$ and then after taking the same Grassmannian discretization of $\mathcal{W}^{\text{gap}}(d,x)$. Therefore, the following estimate holds for any $N^{-D_d-1}\geq \delta\geq y$:  
   \begin{equation}\label{LHSOF1130sigmas}\begin{aligned}
&\mathbb{P}^{\Omega_{\text{good}}}(\inf_{B\in\mathfrak{Reg}(d), \operatorname{Span}\{B_i\}\in\mathcal{W}^{\text{gap}}(d,x)}\|\mathfrak{D}_aB\|\leq\delta)\\&\leq (2\delta)^{2(N-1-m_Z^6)}(C_{\ref{proposition1124}}N)^{(D_d+2)2m_Z^2N}x^{-4(N-1)m_Z^2}.\end{aligned}
   \end{equation}
   Finally we take $
   \delta=x^{4m_Z^2}(C_{\ref{proposition1124}}N)^{-8m_Z^2D_d-1}$, and we use a constant $C_{\ref{proposition1124news}}>C_{\ref{proposition1124}}$ to absorb all the fixed constants and finish with the estimate involving $C_{\ref{proposition1124news}}$. Clearly, when $N$ is sufficiently large, then $\delta\geq y$ so that this choice is valid. This completes the proof.
\end{proof}

Combining all the above cases, we conclude with the following Proposition:

\begin{Proposition}\label{proposition1124final}
Fix some $d\in\mathbb{N}_+$ and some $D_d\in\mathbb{N}_+$.
Then we can find an integer $\operatorname{Deg}_d(D_d)>0$ depending only on $m_Z,d,D_d$ such that, whenever $N$ is sufficiently large (say $N\geq N_0=N_0(Z,R_1,\cdots,R_n,m_Z,d,D_d)$), we have
   $$
 \mathbb{P}^{\Omega_{\text{good}}}(\inf_{B\in\mathfrak{Reg}(d)}\|\mathfrak{D}_aB\|_2\leq N^{-\operatorname{Deg}_d(D_d)} )\leq\exp(-N\log N).$$
\end{Proposition}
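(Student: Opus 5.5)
The plan is to cover $\mathfrak{Reg}(d)$ by finitely many sub-families, each handled by Proposition \ref{proposition1124} or Proposition \ref{proposition1124news}, and then take a union bound. Fix $d$ and $D_d$. For every $B\in\mathfrak{Reg}(d)$, let $E=\operatorname{span}\{B_i\}\in\operatorname{Gr}(d,\mathcal{D}_R)$ and let $\sigma_1\ge\cdots\ge\sigma_{dn}$ be the singular values of $\Psi_E$. Apply the dichotomy of Fact \ref{fact4.80} to this chain. Proposition \ref{proposition1044} gives $\dim Y_\sigma(E)\ge 1$ for $\sigma\le\sigma_0$, so $\sigma_1\ge\sigma_0$ and the chain is never trivial.

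\textbf{Case (1): $\sigma_{dn}\ge\sigma_{dn}^*(d)$.} First suppose all nonzero singular values are at least $\sigma:=\sigma^*_{dn}(d)\ge N^{-\operatorname{Mag}_d(D_d)}$. Then $Y(E)=Y_\sigma(E)$, so $E\in\mathcal{W}(d,\sigma)$, and Proposition \ref{proposition1124} applies with this $\sigma$. It yields the bound $\exp(-N\log N)$ at threshold $\sigma^{4m_Z^2}(C_{\ref{proposition1124}}N)^{-8m_Z^2D_d-1}\ge N^{-4m_Z^2\operatorname{Mag}_d(D_d)-8m_Z^2D_d-2}$ for large $N$.

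One point needs care here. If the chain is padded by zeros, then $\sigma_{dn}=0$ and we are actually in case (2). The case split should therefore be phrased with respect to the positive singular values: either there is no large gap before the chain reaches $0$, or there is one. A zero singular value following a value $\ge\sigma_{dn}^*(d)$ counts as a gap in the sense of (2)(a) or (2)(b), since $0$ satisfies the gap inequality.

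\textbf{Case (2a).} Here some $k$ has $\sigma_0\ge\sigma_{k-1}\ge\sigma_{dn}^*(d)$ together with the gap \eqref{theassumptions}. Choose $x\in\mathfrak{Dya}$ with $x\le\sigma_{k-1}\le 2x$ as in Fact \ref{fact1203}. Then $E\in\mathcal{W}^{\mathrm{gap}}(d,x)$, and Proposition \ref{proposition1124news} gives probability at most $\exp(-N\log N)$ at threshold $x^{4m_Z^2}(C_{\ref{proposition1124news}}N)^{-8m_Z^2D_d-1}$. Since $x\ge\sigma^*_{dn}(d)/2$, this threshold is again at least $N^{-c(m_Z,D_d)}$.

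\textbf{Case (2b).} Take $x=\sigma_0$ and apply Proposition \ref{proposition1124news} in the same way.

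\textbf{Choice of $\operatorname{Deg}_d(D_d)$.} Set $\operatorname{Deg}_d(D_d)$ to be an integer exceeding all the exponents above, for instance $4m_Z^2(\operatorname{Mag}_d(D_d)+1)+8m_Z^2D_d+3$. With this choice, $N^{-\operatorname{Deg}_d(D_d)}$ lies below every case threshold once $N\ge N_0$. The dependence on $C_{\ref{proposition1124}}$ and $C_{\ref{proposition1124news}}$ is absorbed into $N_0$.

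\textbf{Union bound.} The event $\{\inf_{\mathfrak{Reg}(d)}\|\mathfrak{D}_aB\|\le N^{-\operatorname{Deg}_d}\}$ is contained in the union of the case events. There are at most $1+|\mathfrak{Dya}|+1\le \operatorname{Mag}_d(D_d)\log N+O(1)$ of them, each of probability at most $\exp(-N\log N)$. The total is $O(\log N)\exp(-N\log N)$.

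\textbf{Main obstacle: the final form of the bound.} This union-bound loss is the step I expect to need the most care, because the statement claims exactly $\exp(-N\log N)$. I would handle it by noting that the proofs of Propositions \ref{proposition1124} and \ref{proposition1124news} actually give $\delta^{2(N-1-m_Z^6)}$ times polynomial factors. That is much smaller, roughly $\exp(-cN\log N)$ with $c$ large if $\operatorname{Deg}_d$ is increased slightly. Choosing $\operatorname{Deg}_d$ one unit larger than necessary therefore yields an extra factor $N^{-2(N-1-m_Z^6)}$, which absorbs the $O(\log N)$ count. The other point to verify is the edge bookkeeping: every $E$, including those where $\Psi_E$ has zero singular values, falls into one of the three cases, so that the families $\mathcal{W}(d,\sigma^*_{dn})$ and $\mathcal{W}^{\mathrm{gap}}(d,x)$ for $x\in\mathfrak{Dya}\cup\{\sigma_0\}$ together cover $\operatorname{Gr}(d,\mathcal{D}_R)$.
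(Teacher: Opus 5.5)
Your proposal is correct and follows essentially the same route as the paper. You split each $E=\operatorname{span}\{B_i\}$ according to the dichotomy of Fact~\ref{fact4.80}, apply Proposition~\ref{proposition1124} with $\sigma=\sigma^*_{dn}(d)$ in the no-gap case and Proposition~\ref{proposition1124news} for $x\in\mathfrak{Dya}$ (or $x=\sigma_0$) in the gap cases, and finish with a union bound. Your extra bookkeeping fills in details the paper's one-line proof leaves implicit: you check that zero singular values are covered, and you absorb the $O(\log N)$ union-bound factor using the much stronger $\delta^{2(N-1-m_Z^6)}$-type bound those proofs actually give.
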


\begin{proof}
For any subspace $W$ of dimension $d$, either alternative (1) of Fact \ref{fact4.80} is fulfilled and we apply Proposition \ref{proposition1124} with $\sigma=\sigma_{dn}^*(d)$. Or alternative (2) of Fact \ref{fact4.80} is satisfied, and we apply Proposition \ref{proposition1124news} for each $x\in\mathfrak{Dya}$ thanks to the classification in Fact \ref{fact1203}. Taking a union bound over all these events and over the dyadic decomposition $\mathfrak{Dya}$ completes the proof.     
\end{proof}

\subsection{Non-regular profiles and proof completion}

We shall use the following simple approximation result for a profile $B$ by a regular profile $B'$ living in a vector space of smaller dimension:
\begin{lemma}\label{approximationlemma4.6}
    For any profile $B=(B_1,\cdots,B_N)\in (\mathcal{D}_R)^N$ with $\sum_i\|B_i\|_{HS}^2=1$, if $B\notin\mathfrak{Reg}(\dim \mathcal{D}_R)$, then we can find some integer $d',1\leq d'\leq\dim \mathcal{D}_R-1$ and an element $(B_1',\cdots,B_N')\in \mathfrak{Reg}(d')$, with an approximation 
    \begin{equation}\label{approxerror}
\sum_{i=1}^N\|B_i-B_i'\|_{HS}^2\leq N^{-D_{d'+1}}+N^{-D_{d'+2}}+\cdots+N^{-D_{\dim\mathcal{D}_R}}
    \end{equation} whenever $N$ is sufficiently large.
\end{lemma}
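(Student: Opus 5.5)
The plan is to run the eigenvalue-stripping procedure of Section \ref{section4.1.4} on the covariance operator $\Gamma_B=\sum_{j=1}^N B_j\otimes B_j^*$, with thresholds chosen so that they match the regularity exponents $D_1<D_2<\cdots<D_{\dim\mathcal{D}_R}$. Set $W_0:=\operatorname{span}(B_1,\ldots,B_N)\subset\mathcal{D}_R$ and $d_0:=\dim W_0$. Since $\Gamma_B$ is positive semidefinite with range exactly $W_0$ and $\operatorname{tr}\Gamma_B=\sum_i\|B_i\|_{HS}^2=1$, its largest eigenvalue is at least $1/\dim\mathcal{D}_R$.

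The construction is iterative. If the smallest eigenvalue of $\Gamma_B$ on $W_0$ is at least $N^{-D_{d_0}}$, then $B\in\mathfrak{Reg}(d_0)$. By hypothesis this forces $d_0<\dim\mathcal{D}_R$, and we may take $B'=B$ with zero error. Otherwise, remove the bottom eigenvector $u$ of $\Gamma_B|_{W_0}$, and set $W_1:=W_0\ominus\mathbb{C}u$ and $B^{(1)}_i:=P_{W_1}B_i$. By \eqref{approximationerror}, the error is $\sum_i\|B_i-B_i^{(1)}\|^2=\langle\Gamma_B u,u\rangle<N^{-D_{d_0}}$. The new covariance is $\Gamma_{B^{(1)}}=P_{W_1}\Gamma_BP_{W_1}$, whose eigenvalues on $W_1$ are exactly the remaining eigenvalues of $\Gamma_B$ (because $u$ is an eigenvector), and $\operatorname{span}(B^{(1)}_i)=W_1$, of dimension $d_0-1$. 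Repeat with threshold $N^{-D_{d_0-1}}$, and so on. At each stage $W_k$ is spanned by eigenvectors of the original $\Gamma_B$, so the errors are orthogonal pieces and the total error is a sum of eigenvalues: at step $k$, when the dimension is $d_0-k$, the discarded eigenvalue is below $N^{-D_{d_0-k}}$.

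The process stops at some dimension $d'\ge1$, because the top eigenvalue $\ge1/\dim\mathcal{D}_R$ exceeds $N^{-D_1}$ once $N$ is large. The final $B'$ then has span of dimension $d'$ on which $\Gamma_{B'}\ge N^{-D_{d'}}$. Its total error is at most $\sum_{j=d'+1}^{d_0}N^{-D_j}\le\sum_{j=d'+1}^{\dim\mathcal{D}_R}N^{-D_j}$, which is \eqref{approxerror}.

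Two points need checking. First, the definition of $\mathfrak{Reg}(d')$ requires $\sum_i\|B_i'\|^2\in[\frac12,\frac32]$. This holds because $\sum_i\|B_i'\|^2=1-(\text{error})\ge 1-\dim\mathcal{D}_R\,N^{-D_1}\ge\frac12$ for large $N$. Second, we need $d'\le\dim\mathcal{D}_R-1$. If $d_0<\dim\mathcal{D}_R$ this is automatic. If $d_0=\dim\mathcal{D}_R$, the failure of $B\in\mathfrak{Reg}(\dim\mathcal{D}_R)$ forces at least one stripping step, so again $d'<\dim\mathcal{D}_R$.

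The only delicate step is the bookkeeping between the current dimension and the threshold index. At each stage the threshold must be $N^{-D_{\text{current dim}}}$, so that the stopping condition is exactly membership in $\mathfrak{Reg}$. The accumulated errors are then $N^{-D_j}$ for the discarded dimensions $j>d'$, and the monotonicity $D_1<D_2<\cdots$ ensures these are the small terms appearing in \eqref{approxerror}.
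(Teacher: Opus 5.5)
Your proposal is correct and follows essentially the same route as the paper. Both iteratively strip the bottom eigenvector of $\Gamma_B$ using threshold $N^{-D_k}$ when the current dimension is $k$, bound the error through \eqref{approximationerror}, and use $\operatorname{tr}\Gamma_B=1$ to force termination at some $d'\geq 1$. Your observation that each $W_k$ is spanned by eigenvectors of the original $\Gamma_B$ makes the bookkeeping slightly cleaner than the paper's looser $1/(2m_Z^2)$ bound. It shows that the truncated covariance keeps exactly the remaining spectrum, so the top eigenvalue is never lost and the total error is exactly a sum of discarded eigenvalues. This simplifies both the termination argument and the check $\sum_i\|B_i'\|_{HS}^2\geq\frac12$.
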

\begin{proof}
For the covariance operator $\Gamma_B$ on $\mathcal{D}_R$ defined in \eqref{equations877}, if its smallest singular value is less than $N^{-D_{\dim\mathcal{D}_R}}$, then we apply the procedure described in Section \ref{section4.1.4} to find linear subspaces $W$ of $\mathcal{D}_R$ of codimension 1 and project each $B_i$ onto $W$ and get $B_i(1)$. Then we define the covariance operator $\Gamma_{B(1)}$ on $W$. The approximation error is as in \eqref{approximationerror}. Then we test its smallest singular value on $W$: if it is still smaller than $N^{-D_{\dim W}}$, we do this again and project to an even smaller subspace. The process must terminate since the largest eigenvalue of $\Gamma_B$ is at least $1/m_Z^2$ (since the trace is 1 and the dimension is at most $m_Z^2$) and likewise whenever $N$ is large enough, the largest eigenvalue of each covariance operator $\Gamma_{B(k)}$ of approximate vector $B(k)$ is at least $1/2m_Z^2$; therefore if all previous steps of approximation do not yield termination, we must terminate at dimension 1. For $N$ large, the approximation error (right-hand side of \eqref{approxerror}) is less than $\frac{1}{2}$, so the approximation vector belongs to $\mathfrak{Reg}(d')$ for this value of $d'$.
\end{proof}

Now we can complete the proof of Theorem \ref{theorem3.4}. The idea is to use Lemma \ref{approximationlemma4.6} and choose the constants $D_1,D_2,\cdots$ inductively such that $D_i$ is much larger than $D_{i-1}$ and thus the approximation error does not ruin the established least singular value estimate.

\begin{proof}[\proofname\ of Theorem \ref{theorem3.4}]
     We first take $D_1=1$. Then we take an integer $D_2$ satisfying $D_2\geq 2\operatorname{Deg}_1(D_1)+3$, and then take a $D_3$ satisfying $D_3\geq 2\operatorname{Deg}_2(D_2)+3$, iteratively to define all the indices $D_i,1\leq i\leq\dim \mathcal{D}_R$. Here $\operatorname{Deg}_i(D_i)$ are constants determined by Proposition \ref{proposition1124final}. We check that on the good operator event $\Omega_{\text{good}}$, these choices of $D_i$ yield the desired estimate. Indeed, if $B\in\mathfrak{Reg}(\dim\mathcal{D}_R)$, then we use Proposition \ref{proposition1124final}. Otherwise, by Lemma \ref{approximationlemma4.6}, if $B\notin\mathfrak{Reg}(\dim\mathcal{D}_R)$, then we can find some $d'\leq \dim \mathcal{D}_R-1$ and another element $B'=(B_1',\cdots,B_N')\in\mathfrak{Reg}(d')$ such that 
    $$
\sum_{i=1}^N \|B_i-B_i'\|_{HS}^2\leq m_Z^2N^{-D_{d'+1}}.
    $$For the profile $B'$, we have by Proposition \ref{proposition1124final} that the following lower bound$$
    \|\mathfrak{D}_aB'\|_2\geq N^{-\operatorname{Deg}_{d'}(D_{d'})}
    $$
    holds on $\Omega_{\text{good}}$ with probability at least $1-\exp(-N\log N)$. Then on $\Omega_{\text{good}}$ we have 
    $$
\|\mathfrak{D}_a(B-B')\|_2\leq 16m_ZM_{\text{op}}N^{-D_{d'+1}/2}.
    $$Since $D_{d'+1}\geq2\operatorname{Deg}_{d'}(D_{d'})+3$, we have, whenever $N$ is sufficiently large, that
$$
\|\mathfrak{D}_aB\|_2\geq\frac{1}{2} N^{-\operatorname{Deg}_{d'}(D_{d'})}
    $$
    This completes the proof as we combine estimates from all the cases $1\leq d'\leq\dim \mathcal{D}_R$ and slightly changing the exponent in $N$ to absorb the $\frac{1}{2}$ factor. Finally we use $\mathbb{P}(\Omega_{\text{good}}^c)\leq\exp(-\Omega(N)).$
\end{proof}

\section{Brown measure limit and convergence to Brown measure}\label{section5555}
In this section we complete the proof of Theorem \ref{mainconvergencetheorem}. This part of the argument is essentially the same as in \cite{cook2022spectrum}, Section 8. While \cite{cook2022spectrum} only considered quadratic polynomials in Ginibre because their least singular value estimates are only proven for quadratics, the resulting convergence to Brown measure conditioned on a known least singular value estimate holds for polynomials of any degree. We still present the main steps here not only for the sake of completeness, but also for two more reasons: (1) our least singular value estimate does not hold for $z=\mathfrak{p}(0)$ and is not uniform for $z$, but we show that this does not prevent us from using the same convergence argument; and (2) when we adapt the proof to non-Gaussian cases in Section \ref{section6}, several technical steps need to be reworked.

\label{section5.2for}

 For any $z\in\mathbb{C}$ we set 
$$
\nu_N^z:=\mu_{(zI_N-P^N)(zI_N-P^N)^*}
$$where we denote $P^N=\mathfrak{p}(X_1^N,\cdots,X_n^N)$. Applying Green's formula we have
\begin{equation}
\label{thiswillverify}\int_\mathbb{C}\psi(z)d\mu_{P^N}(z)=\frac{1}{4\pi}\int_\mathbb{C}\Delta\psi(z)\int_0^\infty\log(x)d\nu_N^z(x)dz.
\end{equation}Then to justify the convergence, we prove in the following three steps:
\begin{enumerate}
    \item\label{step11}  For each fixed $z\in\mathbb{C}$ the measure $\nu_N^z$ converges weakly almost surely to a probability measure $\nu^z$ on $\mathbb{R}_+$ which can be identified.
    \item\label{step22} For almost every $z\in\mathbb{C}$, we have that the integral $\int_{\mathbb{R}_+}\log (x)d\nu_N^z(x)$ converges to $\int_{\mathbb{R}_+}\log(x)d\nu^z(x)$ in probability. This step uses Theorem \ref{theoremonline89}.
    \item\label{step33}
    In probability, the function $z\mapsto \int_{\mathbb{R}_+}\log(x)d\nu_N^z(x)$ converges in $L^1$ to the function $z\mapsto \int_{\mathbb{R}_+}\log(x)d\nu^z(x)$ so that $\mu_{P^N}$ converges in distribution to the limit that is identified as $\nu_{\mathfrak{p}}(c_1,\cdots,c_n)$.
\end{enumerate}
For non-Gaussian distributions, the proof in Step \ref{step33} does not need any change, but the proof of Step \ref{step11} and Step \ref{step22} will be modified in Section \ref{section6}.

Proof of Step \ref{step11}. Since $(zI-P^N)(zI-P^N)^*$ is self-adjoint in independent Ginibre matrices and their adjoints, this part directly follows from Voiculescu’s theorem \cite{voiculescu1991limit}. The limiting measure $\nu^z$ is the law of $|z-\mathfrak{p}(c_1,\cdots,c_n)|^2$ in the sense of $*$-moments. Moreover, $\nu^z$ is compactly supported for any fixed $z\in\mathbb{C}$.

Proof of Step \ref{step22}.
Denote the event 
$$
\mathcal{G}_N(z):=\{\|P^N\|\leq C_\mathfrak{p},\quad\sigma_{\text{min}}(z-P^N)\geq N^{-\beta/2}\}.
$$Then by Theorem \ref{theoremonline89} and Fact \ref{fact2.7}, we can choose $\beta$ large enough so that for any $z\in\mathbb{C}:z\neq \mathfrak{p}(0)$, $\mathbb{P}(\mathcal{G}_N(z))$ tends to one (although the rate may not be uniform in $z$). 

For any smooth nonnegative function $\chi_\epsilon$ vanishing on $[0,\epsilon/2]$ and equaling one on $[\epsilon,\infty)$,  since we have that 
$$
\limsup_{N\to\infty}\|P^N\|\leq C_\mathfrak{p}\quad a.s.,
$$then Step \ref{step11} tells us that we have convergence 
\begin{equation}
\lim_{N\to\infty}\int _0^\infty \chi_\epsilon(x)\log(x)d\nu_N^z(x)=\int_0^\infty\chi_\epsilon(x)\log(x) d\nu^z(x)\quad 
    a.s.
\end{equation}
Therefore, we only need to prove that for any $\delta,\delta'>0$ we can find an $\epsilon\in(0,1)$ satisfying that 
$$
\limsup_{N\to\infty}\mathbb{P}\left\{\mathbf{1}_{\mathcal{G}_N(z)}\left|
\int_0^\epsilon\log x d\nu_N^z(x)
\right|\geq\delta\right\}
\leq\delta',$$ which further boils down to proving, for each $z\neq \mathfrak{p}(0)$,
$$
\lim_{\epsilon\to 0}\limsup_{N\to\infty}\mathbb{E}\left|\int_{N^{-\beta}}^\epsilon\log xd\nu_N^z(x)\right|=0,
$$
since there are no singular values in $[0,N^{-\beta}]$ on the event $\mathcal{G}_N(z)$.

We define the Stieltjes transform of $\nu^z$ and $\mathbb{E}[\nu_N^z]$ as follows: for any $\zeta\in\mathbb{C}$,
$$
g^z(\zeta)=\int_0^\infty\frac{1}{\zeta-x}d\nu^z(x),\quad g_N^z(\zeta)=\mathbb{E}[\int_0^\infty \frac{1}{\zeta-x}d\nu_N^z(x)]
.$$
We only consider a polynomial $\mathfrak{p}$ of degree at least 1. By \cite{cook2022spectrum}, Lemma 8.1 (the proof actually works for $\mathfrak{p}$ of all degrees, see Appendix \ref{AppendixA} for a proof recall), for any fixed $z\in\mathbb{C}$ we can find $C,N_0>0$ and $c_1,c_2\in(0,1)$ satisfying that for any $\eta\in[N^{-c_1},1]$ and $N\geq N_0$, we have
\begin{equation}\label{thisisfromref3}
|\Im (g_N^z(i\eta))|\leq C\eta^{-c_2}.
\end{equation}
This estimate implies that for any $x\in[N^{-c_1},1]$, we have
\begin{equation}\label{bythegaussianmodel}
F_N^z(x):=\mathbb{E}[\nu_N^z([0,x])]\leq 2x|\Im(g_N^z(ix))|\leq 2Cx^{1-c_2}.
\end{equation} Therefore, for some $\alpha<c_1$ and any $\epsilon<1$,
$$
\begin{aligned}
&\mathbb{E}|\int_{N^{-\beta}}^\epsilon\log xd\nu_N^z(x)|\\&\leq -\int_{N^{-\beta}}^{N^{-\alpha}}\log xd\mathbb{E}[\nu_N^z(x)]-\int_{N^{-\alpha}}^{\epsilon}\log xd\mathbb{E}[\nu_N^z(x)]\\&\leq 2C\beta(\log N)N^{-\alpha(1-c_2)}+2C\frac{\epsilon^{1-c_2}}{1-c_2}-2C(\log\epsilon)\epsilon^{1-c_2},
\end{aligned}$$ which tends to 0 as $\epsilon$ is small enough. This completes Step \ref{step22}.

Proof of Step \ref{step33}. Define the following two functions 
$$
h_N(z):=\int_0^\infty\log xd\nu_N^z(x),\quad h(z)=\int_0^\infty\log xd\nu^z(x).
$$We have proven $h_N(z)$ converges to $h(z)$ in probability for any $z\neq\mathfrak{p}(0)$. For any compact subset $K$, since the operator norm event $\mathcal{A}_N:=\{\|P^N\|\leq C_\mathfrak{p}\}$ has probability $1-o(1)$, we only need to verify the $L^1$ convergence
\begin{equation}
 \lim_{N\to\infty}\mathbb{E}\left(\mathbf{1}_{\mathcal{A}_N}\int_{z\in K}|h_N(z)-h(z)|dz
 \right)=0,   
\end{equation}
and this will verify \eqref{thiswillverify} for twice continuously differentiable functions of bounded support. The bounded support condition is again removed since eigenvalues are almost surely bounded. To verify the convergence we check that they are uniformly bounded in $L^2$: by definition, 
$$
h_N(z)=\frac{2}{N}\sum_{j=1}^N\log|z-\lambda_j(P^N)|,
$$
then by Jensen,
\begin{equation}\label{ell2integrability}
\mathbb{E}\left[\mathbf{1}_{\mathcal{A}_N}\int_K|h_N(z)|^2dz\right]\leq 4\sup_{|\lambda|\leq C_\mathfrak{p}}\int_K|\log|z-\lambda||^2dz,
\end{equation}and the same $L^2$ norm upper bound holds for $h$. Then $\mathbf{1}_{\mathcal{A}_N}h_N$ converges to $\mathbf{1}_{\mathcal{A}_N}h$ in $L^1(\Omega\times K)$. Thus $h_N$ converges in probability to $h$ in $L^1(K)$, so $\mu_{P^N}$ converges to $\nu_{\mathfrak{p}(c_1,\cdots,c_n)}$ in probability in the sense of distributions on $K$. Enlarging $K$ to contain all spectrum of $\mathfrak{p}(c_1,\cdots,c_n)$ completes the proof of Theorem \ref{mainconvergencetheorem}.

\section{Extension to non-Gaussian cases}
\label{section6}
This section proves the non-Gaussian entry convergence result: Theorem \ref{nongaussiantheorem}. We only need to show that each specific Gaussian computation in this paper can be generalized to the non-Gaussian case, possibly with a weakened exponent.

We begin with an operator norm control:

\begin{fact}
    Let $\xi$ be a complex random variable with mean 0, variance 1 and having finite moments of all orders. Let $X^N$ be an $N\times N$ matrix with i.i.d. entries of law $\frac{1}{\sqrt{N}}\xi$. Then for any $q\in\mathbb{N}$ we can find $C_{q,\xi}>0$ depending on $q,\xi$ such that  \begin{equation}\label{thereforeoperator}\mathbb{P}(\|X^N\|\leq 8)\geq 1-C_{q,\xi}N^{-q}.\end{equation} Therefore, in the setting of Theorem \ref{nongaussiantheorem} we work under the following high probability event 
    $$
\Omega_{\text{high-prob}}:=\{\|X_\ell\|\leq 8\quad \forall 1\leq\ell\leq n\},
    $$and since $n$ is fixed we have, for any $q\in\mathbb{N}$ there is a $C_{q,\xi,n}$ depending on $q,\xi,n$:
    $$
\mathbb{P}(\Omega_{\text{high-prob}})\geq 1-C_{q,\xi,n}N^{-q}.
    $$
\end{fact}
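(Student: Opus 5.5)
The plan is to reduce to a standard non-asymptotic operator-norm bound for i.i.d. matrices with many finite moments, via the moment method on a truncated matrix. First I split $X^N = X' + X''$, where $X'$ has entries $\frac{1}{\sqrt N}\xi_{ij}\mathbf{1}\{|\xi_{ij}|\le N^{\kappa}\}$ recentred by their means, for a small fixed $\kappa>0$, and $X''$ collects the remainder. For $X''$, Markov's inequality with the moment bound \eqref{finitenessofmoments} at a large order $Q$ gives $\mathbb{P}(|\xi_{ij}|>N^\kappa)\le M_QN^{-\kappa Q}$. A union bound over the $N^2$ entries then makes the event that some entry is truncated have probability at most $M_QN^{2-\kappa Q}$, which is $\le N^{-q}$ once $Q\ge (q+2)/\kappa$. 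Off this event, $X''$ is only the deterministic recentring matrix. Its entries have size at most $\frac{1}{\sqrt N}\mathbb{E}|\xi|\mathbf{1}\{|\xi|>N^\kappa\}\le N^{-1/2-\kappa(Q-1)}M_Q$, so its operator norm, bounded by its Hilbert–Schmidt norm, is $o(1)$. The variance of the truncated entries differs from $1$ by $O(N^{-\kappa(Q-2)})$, which I absorb by a harmless rescaling.

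Next, I control $\|X'\|$ by the trace moment method. The matrix $X'$ has independent centred entries with variance at most $\frac1N$ and $|X'_{ij}|\le 2N^{\kappa-1/2}$. I estimate $\mathbb{E}\operatorname{Tr}((X'X'^*)^{k})$ with $k\asymp \log N$ by the standard Füredi–Komlós / Bai–Yin path counting. Alternatively I could invoke a ready-made bound such as Latała's inequality or the Bandeira–van Handel inequality: the latter gives $\mathbb{E}\|X'\|\le 2+C\,\max_{ij}\|X'_{ij}\|_\infty\sqrt{\log N}\le 2+o(1)$, using here that $\kappa<1/2$. Concentration, either via Talagrand's inequality for convex Lipschitz functions of bounded independent entries or via the high-moment trace bound directly, then yields $\mathbb{P}(\|X'\|\ge 4)\le \exp(-cN^{1-2\kappa})$, which is far smaller than $N^{-q}$. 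Combining the pieces, $\|X^N\|\le 4+o(1)\le 8$ outside an event of probability $C_{q,\xi}N^{-q}$. This proves \eqref{thereforeoperator}.

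For the second claim, I take a union bound over the fixed number $n$ of i.i.d. copies, giving $\mathbb{P}(\Omega_{\text{high-prob}}^c)\le nC_{q,\xi}N^{-q}$, and set $C_{q,\xi,n}=nC_{q,\xi}$. The main obstacle is the concentration step. The entries are no longer bounded by $O(N^{-1/2})$, so the Talagrand Lipschitz constant picks up the factor $N^{\kappa}$. That is why $\kappa$ must be chosen small, at the cost of a large moment order $Q=Q(q,\kappa)$. This trade-off is exactly why the constant depends on $q$ and $\xi$ and why only polynomial, rather than exponential, failure probability is obtained.
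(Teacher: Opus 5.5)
Your argument is correct, but it is not the paper's route. The paper gives no proof of this fact: it simply invokes the operator-norm bound with polynomial failure rate from \cite{tao2013outliers}, Theorem 1.4, and then takes the same union bound over $1\le \ell\le n$ that you do.

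You instead derive the bound from scratch, in four steps:
\begin{enumerate}
\item truncation at $N^{\kappa}$, via Markov and a union bound over the $N^2$ entries, at a cost of $M_QN^{2-\kappa Q}$;
\item a deterministic recentring matrix of Hilbert--Schmidt norm at most $M_QN^{1/2-\kappa(Q-1)}=o(1)$;
\item a non-asymptotic bound on $\mathbb{E}\|X'\|$ for bounded entries;
\item convex-Lipschitz concentration at scale $N^{\kappa-1/2}$.
\end{enumerate}
What your route buys is transparency. It shows that the polynomial (rather than exponential) failure probability comes entirely from the truncation step. It also shows that only $Q\ge (q+2)/\kappa$ moments of $\xi$ are used, consistent with the paper's remark that finitely many moments $Q_{\mathfrak{p}}$ suffice. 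The citation buys brevity.

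One point to tighten. The statement fixes the numerical threshold $8$, so your expectation bound must come with an explicit leading constant.
\begin{itemize}
\item Lata{\l}a's inequality has an unspecified universal constant, so by itself it cannot give the threshold $8$.
\item The original Bandeira--van Handel bound for bounded non-Gaussian entries has leading term $(1+\varepsilon)2\sqrt{2}\,\sigma$ for real symmetric matrices. After splitting your complex matrix into real and imaginary parts you get about $4(1+\varepsilon)$, not $2$.
\item The constant $2$ for complex non-Gaussian entries does follow from \cite{brailovskaya2024universality}.
\end{itemize}
Either way, replacing your level $4$ by, say, $6$ leaves ample room below $8$, and the rest of your argument is unchanged.
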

\begin{proof}
    The operator norm control \eqref{thereforeoperator} with non-asymptotic polynomial rate error can be found in  \cite{tao2013outliers}, Theorem 1.4.
\end{proof}

\subsection{Least singular value estimates}We first prove analogous least singular value estimates.
We can generalize the proof of Theorem \ref{theorem3.4}:

\begin{Proposition}In the setting of Theorem \ref{nongaussiantheorem}, the conclusion of Theorem \ref{theorem3.4} still holds in the following form: we can find $D_Z>0$ depending only on $\dim Z$, and for each $q\in\mathbb{N}_+$ we can find some  $C_{q,\xi}>0$ (depending only on $q,\xi,n,\dim Z$) and some $N_0=N_0(\xi,Z,R_1,\cdots,R_n)>0$
such that whenever $N\geq N_0$ is sufficiently large, 
$$
\mathbb{P}(\sigma_{\text{min}}(\mathfrak{D}_a)\leq N^{-D_Z})\leq C_{q,\xi}N^{-q}.
$$
\end{Proposition}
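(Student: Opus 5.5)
We will repeat the proof of Theorem \ref{theorem3.4} from Section \ref{secitons4} and change only two ingredients. The first is the probability of the good event. The second is the small-ball estimate \eqref{lines1186} for the Gaussian vector $\mathcal{G}_k$ in \eqref{gaussiangk}. Everything else carries over verbatim, because it is deterministic given the entries or uses only the operator norm bound. This includes Proposition \ref{proposition1044}, the tube covering Lemma \ref{theorem4.4}, the nets for the $B_k$ and for the Grassmannian, the dichotomy of Facts \ref{fact4.80} and \ref{fact1203}, the approximation Lemma \ref{approximationlemma4.6}, and the inductive choice of $D_1<D_2<\cdots$.

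We replace $\Omega_{\text{good}}$ by $\Omega_{\text{high-prob}}$, together with the deterministic bound $\|\Psi_E\|\leq M_{\text{op}}$. By \eqref{thereforeoperator}, its complement has probability at most $C_{q,\xi,n}N^{-q}$. This term is the source of the polynomial failure probability in the statement. The net union bounds themselves will still give $\exp(-cN)$, or even $\exp(-N\log N)$, once the small-ball estimate is in hand.

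The main step is a non-Gaussian replacement for \eqref{lines1186}. Fix a profile $B'$ in the net and condition on everything except column $k$. The variables $(\overline{X_\ell(i,k)})_{i\neq k,\ell}$ are then independent, each equal to $N^{-1/2}$ times a variable with density bounded by $K$ on $\mathbb{C}$. The entries with $i=k$ multiply $B'_k$ itself and can be frozen. The vector $\mathcal{G}_k$ is a linear image $L\xi$ of an independent vector $\xi\in\mathbb{C}^{nN}$. We need
\[
\sup_{w}\mathbb{P}\big(\|L\xi-w\|\leq 2\delta\big)\leq \Big(\frac{C\delta^2}{\sigma^2N^{-D_d-1}}\Big)^{\dim Y_\sigma(E)} .
\]
For this we use the standard bound on densities of projections of product measures with bounded densities (Rudelson--Vershynin, "Small ball probabilities for linear images of high dimensional distributions"). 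For $\xi$ with independent coordinates of density at most $K$ and a projection $P$ of rank $r$, the density of $P\xi$ is at most $(CK)^{r}$. The complex case follows by viewing $\mathbb{C}$ as $\mathbb{R}^2$, since the density on $\mathbb{C}$ is bounded. We apply this to $P_{Y}L$ with $Y=Y_\sigma(E)$, after factoring $P_YL=U\Sigma P$ through its singular value decomposition. The covariance bound \eqref{covgk} is purely second-order, so it continues to hold: it gives singular values of $P_YL$ at least $\sigma N^{-(D_d+1)/2}/\sqrt2$ on an $r=\dim Y$-dimensional range. A change of variables then yields the displayed estimate with a constant depending only on $K$ and $m_Z$. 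Columns $k\neq a$ involve disjoint sets of entries $X_\ell(\cdot,k)$, except for the diagonal overlap already frozen. Conditioning column by column therefore lets us multiply the bounds over $k$, exactly as in the Gaussian case.

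With this estimate, Propositions \ref{proposition1124}, \ref{proposition1124news} and \ref{proposition1124final} hold with identical exponents on $\Omega_{\text{high-prob}}$. We then conclude as in the proof of Theorem \ref{theorem3.4}: we take a union bound over the finitely many $d$, over the dyadic scales in $\mathfrak{Dya}$, and over the approximation of non-regular profiles. The resulting failure probability is $\exp(-N\log N)+C_{q,\xi,n}N^{-q}\leq C_{q,\xi}N^{-q}$ for $N\geq N_0$.

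The step I expect to be delicate is the small-ball bound. It needs the covariance lower bound to translate into a lower bound on the singular values of the coefficient map $L$. This holds because the entries have identity covariance: mean 0, variance 1, and independent real and imaginary structure is not required, only $\mathbb{E}|\xi|^2=1$ together with a bounded density. If the real and imaginary parts of $\xi$ are correlated, I would instead apply the projection-density bound directly to the real-linear map $L$ and lower bound its real singular values, using that a bounded density forces a nondegenerate real covariance.
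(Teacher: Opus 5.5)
Your proposal follows the paper's proof. It works on $\Omega_{\text{high-prob}}$ and modifies only the Gaussian small-ball step \eqref{lines1186}, replacing it by the bounded-density estimate for linear images $AX$ with $AA^*\geq\sigma' I$ (Corollary \ref{corollary6.8}). For a law with bounded density on $\mathbb{C}$ but dependent real and imaginary parts, this estimate does not follow directly from the real Rudelson--Vershynin theorem by identifying $\mathbb{C}$ with $\mathbb{R}^2$. That is why the paper proves the complex two-dimensional-block version, Lemma \ref{lemma1519}, via geometric Brascamp--Lieb.

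One correction: the entry sets $\{X_\ell(i,k)\}_{i,\ell}$ for distinct $k$ are already disjoint, so there is no diagonal overlap and you should not freeze the $i=k$ entries. Freezing removes the term $B_k'$ from the covariance, which can invalidate \eqref{covgk} for that column.
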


\begin{proof}
    We always work on the high probability event $\Omega_{\text{high-prob}}$. Then the only place in the Gaussian proof requiring a modification is the use of Gaussian anti-concentration in \eqref{lines1186}. For $\xi$ having a planar density $\|f\|_{L^\infty(\mathbb{C})}\leq K$, we have the following replacement: let $\mathcal{G}_k$ be the random vector defined in \eqref{gaussiangk} but where we replace Gaussian entry distribution by the distribution $\xi$. Let $\Gamma$ be the covariance matrix $\operatorname{Cov}(\mathcal{G}_k)$ as in \eqref{covgk} and recall that $Y_\sigma(E)$ is a $\dim Y_\sigma(E)$-dimensional complex subspace where $\Gamma_{Y_\sigma(E)}\geq \sigma' I_{Y_\sigma(E)}$ for $\sigma'=\frac{\sigma^2}{2}N^{-D_d-1}$, then via the bounded density projection estimate in Corollary \ref{corollary6.8}, 
$$
\mathbb{P}(\|P_{Y_\sigma(E)}\mathcal{G}_k-P_{Y_\sigma(E)}B_k'\|_{\text{HS}}\leq r)\leq (CK\frac{r^2}{\sigma'})^{\dim Y_\sigma(E)}
$$for a universal constant $C>0.$ The rest of the proof follows as in the Gaussian case.
\end{proof}

Then we can generalize Theorem \ref{theorem1290} to the non-Gaussian case:

\begin{theorem}\label{theorems6.30}Let $\xi$ be a complex random variable with mean $0$, variance $1$, finite moments of all orders and admitting a complex density $f$ satisfying $\|f\|_{L^\infty(\mathbb{C})}\leq K<\infty$.
Fix integers $m$ and $n$, and $n$ matrices $R_1,\cdots,R_n\in M_m(\mathbb{C})$. Denote by $\mathcal{T}_N$ the following matrix in $M_{mN}(\mathbb{C})$, where $X_1^N,\cdots,X_n^N$ are i.i.d. $N\times N$ matrices with i.i.d. entries of distribution $\frac{1}{\sqrt{N}}\xi$:
$$
\mathcal{T}_N:=I_m\otimes I_N+\sum_{\ell=1}^nR_\ell\otimes X_\ell^N
.$$
Then for any $q\in\mathbb{N}_+$ we can find constants $C^{R,\xi},c^{R,\xi},c_0^{R,\xi}$ depending on $n,m,R_1,\cdots,R_n$ and the law of $\xi$, as well as a constant $C_{q,\xi}>0$ depending only on $q,m,n$ and $\xi$, such that, whenever $N\geq N_0=N_0(\xi,R_1,\cdots,R_n)$,
 $$
\mathbb{P}\{\sigma_{min}(\mathcal{T}_N)\leq\epsilon
\}\leq C^{R,\xi}(N^{c_0^{R,\xi}}\epsilon^{c^{R,\xi}})+C_{q,\xi}N^{-q}.
    $$    Moreover, $c_0^{R,\xi}$ and $c^{R,\xi}$ can be chosen to depend only on $m$ and $n$.
\end{theorem}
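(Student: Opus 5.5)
We will follow the Gaussian proof of Theorem \ref{theorem1290} step by step, changing only the places where Gaussianity was used. First we keep the same algebraic reduction. Fix a composition filtration \eqref{filtrationgaps} for $\mathcal{A}=\operatorname{alg}(I_m,R_1,\cdots,R_n)$. Then $\mathcal{T}_N$ is block upper triangular with diagonal blocks $\mathcal{T}_{N,I(b)}$ as in \eqref{label526}. All further work is on $\Omega_{\text{high-prob}}$, where $\|\mathcal{T}_N\|\le C_6$. Since this event fails with probability at most $C_{q,\xi,n}N^{-q}$, this accounts for the additive term $C_{q,\xi}N^{-q}$. By Fact \ref{fact4570}, applied $s-1$ times, it suffices to prove an estimate of the form $C N^{c_0}\epsilon^{c}+C_qN^{-q}$ for each irreducible block $Z=S_{I(b)}$. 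Trivial blocks are the identity, so there is nothing to prove for them.

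For a nontrivial irreducible block we use the negative second moment identity, Lemma \ref{blocksecondidentity}, and expose block column $a$. Proposition \ref{proposition} and Corollary \ref{corollary3.70} used Gaussianity only through Theorem \ref{theorem3.4}. We replace it by its non-Gaussian version just proved, which holds with probability $1-C_{q,\xi}N^{-q}$. The derivation \eqref{adjointequation}–\eqref{atlines747} is deterministic given the operator norm bound, so we again obtain the frame bound \eqref{equationest702}. This bound concerns variances only, and $\xi$ has variance one, so \eqref{covariancelowerbound} holds verbatim for the affine map $D_a(\xi)=P_{W_a}C_a(\xi)$, now with i.i.d. entries $\xi_i^\ell/\sqrt N$.

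The main obstacle is converting this into a small-ball bound for $\det D_a(\xi)$. Both the chaos-orthogonality argument for \eqref{determinantmeans} and Carbery–Wright need Gaussian, or at least log-concave, inputs. As announced in the introduction, we will instead invoke the substitute anti-concentration estimate, Proposition \ref{proposition6.2}. We expect it to play the following role: for a polynomial of degree at most $\dim Z$ in independent variables with bounded planar density and all moments, it should give $\mathbb{P}(|f|\le \epsilon)\le C(\epsilon/\|f\|)^{c}$ plus a polynomially small error, where $\|f\|$ is some $L^2$-type normalization.

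To lower bound $\mathbb{E}|\det D_a(\xi)|^2$ we expand the determinant multilinearly in the independent, mean-zero, unit-variance entries. The top-degree part is a multilinear form whose coefficients match those of the Gaussian case, so its $L^2$ norm equals the Gaussian value. Multilinear monomials in distinct independent centered variables are orthogonal, and repeated-index terms only add nonnegative contributions to the diagonal. So, via the decomposition \eqref{shortcuts}, we still get $\mathbb{E}|\det D_a|^2\ge C_ZN^{-B_Z\dim Z}$. If needed, we would transfer the bound from the Gaussian case by moment matching, which is where the finite moments enter. Proposition \ref{proposition6.2} then yields \eqref{goodlines839} with modified constants $c_Z,C_Z'$.

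Finally, we take a union bound over $a\in[N]$, use Lemma \ref{blocksecondidentity} to get $\sigma_{min}(\mathcal{T}_{N,I(b)})\ge t/\sqrt{N\dim Z}$, and combine the blocks via Fact \ref{fact4570}. Uniformity of $c_0,c$ in $m,n$ follows as in the Gaussian constant-dependence argument. The exponents depend only on the finitely many possible dimension sequences of the composition factors and on the degree $\dim Z$ in Proposition \ref{proposition6.2}.
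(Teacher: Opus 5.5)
Your overall architecture matches the paper's: composition series, exposed block columns, the non-Gaussian version of Theorem \ref{theorem3.4} giving the frame bound, an $L^2$ lower bound on $\det D_a$ followed by anti-concentration, a union bound over $a$, and Fact \ref{fact4570}. However, the one step that is genuinely new in the non-Gaussian setting, the small-ball bound for $\det D_a(\zeta)$, has two gaps.

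\textbf{The $L^2$ lower bound.} Your orthogonality argument for $\mathbb{E}|\det D_a|^2$ is not correct. The polynomial $\det\bigl(D_{a,0}+N^{-1/2}\sum_{\ell,i}\zeta_{\ell i}C_{\ell i}\bigr)$ is holomorphic of degree $\dim Z$. Its top-degree part is not multilinear: one variable multiplies several matrix entries, so terms like $\zeta_{\ell i}^2$ appear. For a non-Gaussian law, such repeated-index monomials are not orthogonal to each other or to lower-degree monomials, because $\mathbb{E}\zeta^2$ and $\mathbb{E}[\zeta|\zeta|^2]$ need not vanish. Cross terms can therefore be negative, and the $L^2$ norm is not the Gaussian one. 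Finite moments cannot rescue a lower bound either: for Rademacher $\zeta$ one has $\zeta^2-1\equiv 0$, while $\mathbb{E}|g^2-1|^2=3$ for a circular Gaussian. The paper instead computes the bound for circular Gaussians, where holomorphic monomials are orthogonal, and transfers it via Lemma \ref{oftheorem3.1}. That lemma's lower bound needs the Gram matrix $(\mathbb{E}\zeta^a\bar\zeta^b)_{a,b\le \dim Z}$ to be positive definite. This is exactly what the bounded density supplies, and the transfer costs only a harmless factor $(1+nN)^{-\dim Z}$.

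\textbf{The anti-concentration step (more serious).} Proposition \ref{proposition6.2} is not dimension-free. Its constant and its exponent $\frac{Q}{D(Q+2n)}$ depend on the number of complex variables, and its hypotheses concern the joint density and joint moments. Applied directly to the $nN$ entries $\zeta_{\ell i}$, the joint density is of order $K^{nN}$, $\mathbb{E}\|\zeta\|^Q$ grows like $N^{Q/2}$, and the exponent tends to $0$, so you get nothing. The missing idea is a dimension reduction, which goes as follows.
\begin{itemize}
\item Let $V_\ell=\operatorname{span}(C_{\ell 1},\ldots,C_{\ell N})$, of dimension $r_\ell\le(\dim Z)^2$, and expand each $C_{\ell i}$ in an orthonormal basis of $V_\ell$. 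Then $D_a$ depends on $\zeta^\ell$ only through $F^\ell\zeta^\ell\in\mathbb{C}^{r_\ell}$.
\item Factor $F^\ell=\Sigma_{F^\ell}^{1/2}U_{F^\ell}$ with $U_{F^\ell}U_{F^\ell}^*=I$. Now $\det D_a$ is a polynomial of degree $\dim Z$ in the vector $(U_{F^\ell}\zeta^\ell)_\ell$, whose dimension is at most $nm^2$ independently of $N$.
\item This vector has density bounded by $\prod_\ell (eK)^{r_\ell}$ (Lemma \ref{lemma1519}, via Brascamp--Lieb) and moments bounded uniformly in $N$ (Fact \ref{facts1518}, via Rosenthal).
\end{itemize}
Only then does Proposition \ref{proposition6.2}, combined with the comparison-lemma bound on $\|\det D_a\|_{L^2}$, give $\mathbb{P}(|\det D_a|\le t)\le C(tN^{A_Z})^{\theta_Z}$ with exponents depending only on $\dim Z$. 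That estimate is what you need to feed into the union bound and the negative second moment identity.
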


\begin{proof}We again first reduce to an irreducible module $Z$ under left $R_1,\cdots,R_n$-action, and denote the action again by $R_1,\cdots,R_n$.

Compared to the proof of Theorem \ref{theorem1290} in the Gaussian case, the only two places where Gaussian law is used are (1) estimate \eqref{determinantmeans} where we use Gaussian chaos, and (2) estimate \eqref{cerberywrights} where we use Carbery-Wright \cite{carbery2001distributional}. We substitute these two steps by non-Gaussian estimates. But the estimates we derive are not dimension free. Thus we show that $\det D_a(\zeta)$, being a polynomial of degree $\dim Z$ in $(\dim Z)^2$ entries, can indeed be rewritten as a polynomial depending on an $n(\dim Z)^2\leq nm^2$-dimensional random vector which has a bounded density (rather than a polynomial depending on all the $N$ random variables.) 

Fix $\mathcal{T}_{-a}$ and
denote by 
$$D_a(\zeta)=D_{a,0}+\frac{1}{\sqrt{N}}\sum_{\ell=1}^n\sum_{i=1}^NC_{\ell i}\zeta_{\ell i},$$ where $\zeta_{\ell i}$ are i.i.d. copies of $\xi$ and $D_{a,0},C_{\ell i}\in M_m(\mathbb{C})$ are fixed by the unrevealed columns $\mathcal{T}_{-a}$. Denote by $V_\ell=\operatorname{Span}(C_{\ell 1},\cdots,C_{\ell N})$ and assume that $\dim V_\ell =r_\ell$. We now choose an orthonormal basis $F^\ell _1,\cdots,F_r^\ell$ for $V_\ell$ and write 
$$C_{\ell i}=\sum_{\alpha=1}^{r_\ell} f^\ell _{\alpha i}F^\ell _\alpha.
$$
Then 
$$
D_a(\zeta)=D_{a,0}+\frac{1}{\sqrt{N}}\sum_{\ell=1}^n\sum_{\alpha=1}^{r_\ell} Y_\alpha^\ell  F_\alpha^\ell,\quad Y_\alpha^\ell=\sum_{i=1}^N f_{\alpha i}^\ell \zeta_{\ell i}
.$$Equivalently, we can write 
$$
Y^\ell =F^\ell \zeta^\ell,\quad F^\ell =(f^\ell _{\alpha i})_{\alpha\leq r_\ell ,i\leq N}\in\mathbb{C}^{r_\ell \times N},\quad \zeta^\ell=(\zeta_{\ell 1},\cdots,\zeta_{\ell N})^T.
$$Then $F^\ell $ has rank $r_\ell$. Denote by $\Sigma_{F^\ell}=F^\ell (F^\ell)^*$, then $\Sigma_{F^\ell}$ is positive definite. Denote by 
 $U_{F^\ell}:=\Sigma_{F^\ell}^{-1/2}F_\ell$. Then $U_{F^\ell}U_{F^\ell}^*=I_{r_\ell}$ and thus 
$$D_a(\zeta)=D_{a,0}+\frac{1}{\sqrt{N}}\sum_{\ell=1}^n\sum_{\alpha=1}^{r_\ell}(\Sigma_{F^\ell}^{1/2}U_{F^\ell}\zeta^\ell)_\alpha F_\alpha^\ell.$$ 

For the first usage of Gaussians, note that similarly as in  \eqref{determinantmeans} we have that for $N$ sufficiently large and with probability at least $1-C_{q,\xi}N^{-q}$ over $\mathcal{T}_{-a}$, the same frame lower bound as in \eqref{covariancelowerbound} holds. Then we have the following determinant estimate
\begin{equation}\label{determinantmeansnew}
\mathbb{E}_{\zeta\sim \mathcal{N}_\mathbb{C}(0,1)}[|\det D_a(\zeta)|^2]\geq C_{Z} N^{-B_Z\dim Z},
\end{equation}where $\zeta$ takes the law of a standard complex Gaussian. For this step, we regard $\det D_a(\zeta)$ as a degree-$\dim Z$ polynomial depending on $nN$ independent random variables $\zeta_{\ell i}$ rather than using the projected density $U_{F^\ell}\zeta^\ell$. Then by Lemma \ref{oftheorem3.1} (the Gram matrix is positive definite because $\xi$ has a bounded density), we can find a constant $C_{Z,m,n,\xi}$ which further depends on $m,n$ and the law of $\xi$ so that 
$$
\mathbb{E}_{\zeta\sim \xi}[|\det D_a(\zeta)|^2]\geq C_{Z,m,n,\xi} N^{-B_Z\dim Z-\dim Z}
,$$
where the $N^{-\dim Z}$ factor comes from Lemma \ref{oftheorem3.1} with $s=nN,d=\dim Z\leq m$.

For the second usage of Gaussians, we will use Proposition \ref{proposition6.2} instead, which provides a substitute for Carbery-Wright in the non-log-concave setting. By Fact \ref{facts1518}, $U_{F^\ell}\zeta^\ell$ has (uniformly in $\ell$) bounded moments of all orders. Then we can apply Proposition \ref{proposition6.2} to $\det D_a(\zeta)$ as a function depending on the random vectors $U_{F^\ell}\zeta^\ell\in\mathbb{C}^{r\ell}$ since for each $\ell$, the density of $U_{F^\ell}\zeta^\ell$ is bounded by $(eK)^{r_\ell}$. We get, for large enough $N\geq N_0$, for any $t>0$:
$$\mathbb{P}_{\zeta\sim\xi}(|\det D_a(\zeta)|\leq t)\leq C_Z'(tN^{A_z})^{\theta_Z
}$$ for constants $A_Z,\theta_Z$ depending only on $\dim Z$, and the constant $C_Z'$ also depends on $\xi$.
The exponent in $\epsilon$ given by Proposition \ref{proposition6.2} is weaker than the Gaussian case but still sufficient for the proof. Here $N_0=N_0(\xi,R_1,\cdots,R_n)$.

Then we obtain least singular value lower bound for $D_a(\zeta)$ via the elementary inequality $|\det D_a(\zeta)|\leq\sigma_{\text{min}}(D_a(\zeta))\|D_a(\zeta)\|^{m-1}$, and that on $\Omega_{\text{high-prob}}$ $\|D_a(\zeta)\|$ is polynomially bounded.
All other steps of the proof of Theorem \ref{theorem1290} follow without change, and the non-Gaussian version of the estimate for $\sigma_{\text{min}}(\mathcal{T}_N)$ is proven.
\end{proof}

We can then generalize Theorem \ref{theoremonline89} to non-Gaussian entries via linearization:

\begin{theorem}
    Let $n\in\mathbb{N}$ and $\mathfrak{p}\in\mathbb{C}\langle x_1,\cdots,x_n\rangle$. Let $\xi$ have the distribution stated in Theorem \ref{nongaussiantheorem}. Then there are constants $c_0(\mathfrak{p},\xi)$ and $c(\mathfrak{p},\xi)$ depending on $\mathfrak{p}$ and $\xi$; a constant $\widehat{C}(\mathfrak{p},\xi,z)>0$ depending on $\mathfrak{p}$, $\xi$ and $z$; as well as for any $q\in\mathbb{N}_+$ a constant $C_q=C_{\mathfrak{p},q,\xi}>0$ depending on $\mathfrak{p},q,\xi$, so that we have the following estimate. Let $X_1^N,\cdots,X_n^N$ be independent $N\times N$ complex matrices with i.i.d. entries of law $\frac{1}{\sqrt{N}}\xi$; and denote by $P^N=\mathfrak{p}(X_1^N,\cdots,X_n^N)$. Then for any $z\in\mathbb{C},z\neq \mathfrak{p}(0)$ we can find some fixed $N_0=N_0(z,\mathfrak{p},\xi)$ such that whenever $N\geq N_0$, we have for any $\epsilon>0$,
    $$
\mathbb{P}\{\sigma_{min}(P^N-z)\leq\epsilon
\}\leq \widehat{C}(N^{c_0}\epsilon^c)+C_qN^{-q}.
    $$
\end{theorem}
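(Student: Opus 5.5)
The plan is to copy the deduction of Theorem \ref{theoremonline89} from Theorem \ref{theorem1290}, with Theorem \ref{theorems6.30} taking the place of Theorem \ref{theorem1290}. The linearization in Section \ref{section2} is purely algebraic. The Schur complement identity of Lemma \ref{lemma2.222} holds for any matrices $X_1,\cdots,X_n$, so it applies to the non-Gaussian $X_\ell^N$ exactly as stated. Whenever $\mathcal{L}_z(X)$ is invertible, $(P^N-z)^{-1}$ is a compression of $\mathcal{L}_z(X)^{-1}$, which gives
$$
\sigma_{min}(P^N-z)\geq \sigma_{min}(\mathcal{L}_z(X)).
$$
If $\mathcal{L}_z(X)$ is singular, the right-hand side is $0$ and the inequality is trivial. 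For $z\neq\mathfrak{p}(0)=a_\emptyset$, Fact \ref{fact2.568} gives $K_z=(a_\emptyset-z)I_m$ and
$$
\sigma_{min}(\mathcal{L}_z(X))=|z-a_\emptyset|\,\sigma_{min}(\mathcal{L}_z'(X)),\qquad \mathcal{L}_z'(X)=I_m\otimes I_N+\sum_\ell R_\ell\otimes X_\ell,\quad R_\ell:=(a_\emptyset-z)^{-1}H_\ell.
$$
These are again deterministic algebraic facts that do not depend on the law of the entries.

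Next I would apply Theorem \ref{theorems6.30} with this fixed $z$ and the coefficients $R_\ell=H_\ell K_z^{-1}$. These coefficients depend only on $\mathfrak{p}$ and $z$, and $m=m(\mathfrak{p})$. The theorem provides $N_0(\xi,R)=N_0(z,\mathfrak{p},\xi)$ and, for every $q$, a bound
$$
\mathbb{P}\{\sigma_{min}(\mathcal{L}_z')\leq\epsilon'\}\leq C^{R,\xi}N^{c_0}\epsilon'^{c}+C_{q,\xi}N^{-q}
$$
with $\epsilon'=\epsilon/|z-a_\emptyset|$. The exponents $c_0,c$ depend only on $m,n$, and hence only on $\mathfrak{p}$. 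The constant $C_{q,\xi}$ depends on $q,m,n,\xi$, and hence on $\mathfrak{p},q,\xi$. Substituting $\epsilon'$ gives
$$
\mathbb{P}\{\sigma_{min}(P^N-z)\leq\epsilon\}\leq C^{R,\xi}|z-a_\emptyset|^{-c}N^{c_0}\epsilon^{c}+C_qN^{-q}.
$$
I would then set $\widehat{C}(\mathfrak{p},\xi,z):=C^{R,\xi}|z-a_\emptyset|^{-c}$.

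The main point to check is the constant bookkeeping, namely that the $N^{-q}$ term really is independent of $z$ as the statement claims. The $N^{-q}$ term in Theorem \ref{theorems6.30} comes from two sources. The first is the operator norm event $\Omega_{\text{high-prob}}$, which is independent of $R$. The second is the failure probability of the frame bound and of the non-Gaussian analogue of Theorem \ref{theorem3.4}. Any dependence on $R$, and hence on $z$, should be absorbed into the threshold $N_0$ rather than into $C_q$. I would also note that $|z|$ may be restricted to $|z|\leq C_{\mathfrak{p}}$, as in Fact \ref{fact2.7}, since on $\Omega_{\text{high-prob}}$ a larger $|z|$ gives $\sigma_{min}(P^N-z)\geq |z|-\tfrac12 C_\mathfrak{p}$ directly. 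Beyond these points the argument is a direct transfer, and no new probabilistic input is needed.
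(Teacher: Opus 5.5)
Your proposal is correct and is the same argument as the paper's. The paper also deduces this theorem directly from Theorem \ref{theorems6.30}, combined with the Schur-complement comparison of Lemma \ref{lemma2.222} and the rescaling in Fact \ref{fact2.568}, just as you do with $R_\ell=(a_\emptyset-z)^{-1}H_\ell$ and $\epsilon'=\epsilon/|z-a_\emptyset|$. Your constant bookkeeping matches the statement of Theorem \ref{theorems6.30}, and the restriction to $|z|\leq C_\mathfrak{p}$ is harmless but unnecessary, since $\widehat{C}$ and $N_0$ are allowed to depend on $z$.
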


The proof is immediate by combining Theorem \ref{theorems6.30} with Lemma \ref{lemma2.222} and Fact \ref{fact2.568}.

We have used the following comparison lemma for polynomial moments:
\begin{lemma}\label{oftheorem3.1}
    Let $\mu$ be a complex random variable and fix an integer $d>0$. Suppose that 
    $$
\mathbb{E}|\mu|^{2d}<\infty,
    $$and the following Gram matrix 
    $$
M^\mu_{(d)}=(\mathbb{E}[\mu^a\bar{\mu}^b])_{0\leq a,b\leq d}
    $$is positive definite. Let $\xi_1,\cdots,\xi_s$ be i.i.d. copies of $\mu$, and let $g_1,\cdots,g_s$ be i.i.d. standard complex Gaussians. Then for any holomorphic polynomial $$Q(z_1,\cdots,z_s)$$
    with total degree at most $d$, we have the comparison
    \begin{equation}
C_{d,\mu}^{-1}(1+s)^{-d}\mathbb{E}|Q(g_1,\cdots,g_s)|^2\leq\mathbb{E}|Q(\xi_1,\cdots,\xi_s)|^2\leq C_{d,\mu}(1+s)^d\mathbb{E}|Q(g_1,\cdots,g_s)|^2       .
    \end{equation}
    Here the constant $C_{d,\mu}$ depends only on $d$ and the law of $\mu$, but does not depend on $s$.
\end{lemma}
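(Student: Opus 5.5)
We would prove Lemma \ref{oftheorem3.1} by expanding $Q$ in an orthonormal basis adapted to the law $\mu$, and comparing $L^2$ norms coefficient by coefficient.

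\emph{Step 1: one-variable orthogonal polynomials.} Since the Gram matrix $M^\mu_{(d)}$ is positive definite, Gram--Schmidt applied to $1,z,\dots,z^d$ in $L^2(\mu)$ produces polynomials $p_0,\dots,p_d$ with $\deg p_a=a$ and $\mathbb{E}[p_a(\mu)\overline{p_b(\mu)}]=\delta_{ab}$. We can take $p_0=1$, and we should not assume $\mathbb{E}\mu=0$. Write $z^a=\sum_{b\le a}L_{ab}p_b(z)$, where $L$ is lower triangular and $LL^*=M^\mu_{(d)}$. Both $L$ and $L^{-1}$ have operator norm bounded by a constant $c_{d,\mu}$ that depends only on $d$ and the law of $\mu$. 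For the Gaussian we have $\mathbb{E}[g^a\bar g^b]=a!\,\delta_{ab}$, so the Gaussian basis is simply the rescaled monomials.

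\emph{Step 2: tensorization.} Write $Q=\sum_\alpha q_\alpha z^\alpha$, where $\alpha$ ranges over multi-indices with $|\alpha|\le d$. Independence gives $\mathbb{E}|Q(g)|^2=\sum_\alpha|q_\alpha|^2\alpha!$, with $\alpha!\in[1,d!]$. Next expand in the product basis $p_\beta=\prod_i p_{\beta_i}(z_i)$, which is orthonormal under $\mu^{\otimes s}$:
$$Q=\sum_\beta \Big(\sum_{\alpha\ge\beta} q_\alpha \prod_i L_{\alpha_i\beta_i}\Big)p_\beta,$$
where $\alpha\ge\beta$ is understood coordinatewise. The point is sparsity. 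In the product $\prod_i L_{\alpha_i\beta_i}$, every coordinate with $\alpha_i=\beta_i=0$ contributes the factor $L_{00}=1$. Hence the product is a product of at most $d$ nontrivial entries, and it is bounded by $c^d$. So the linear map $q\mapsto(\text{coefficients in }p_\beta)$ is entrywise bounded by $c^d$, and it is nonzero only on pairs with $\beta\le\alpha$.

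\emph{Step 3: counting.} This step is where the factor $(1+s)^d$ must come from, and controlling it is the main obstacle. Apply Schur's test to the coefficient map. Each fixed $\alpha$ has at most $2^d$ (or $(d+1)^d$) indices $\beta\le\alpha$, since $\beta$ can differ from $0$ only on the support of $\alpha$. Each fixed $\beta$ has at most $\binom{s+d}{d}\le(1+s)^d$ indices $\alpha\ge\beta$ with $|\alpha|\le d$. Schur's test then bounds the operator norm by $c^d\sqrt{2^d(1+s)^d}$. Squaring gives
$$\mathbb{E}|Q(\xi)|^2\le C_{d,\mu}(1+s)^d\,\mathbb{E}|Q(g)|^2.$$

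For the reverse inequality we apply the same argument to the inverse change of basis $p_b=\sum_a (L^{-1})_{ba}z^a$. Its entries are again products with trivial factor $(L^{-1})_{00}=1$ on unused coordinates, and the same counting applies. This yields the lower bound with $C_{d,\mu}^{-1}(1+s)^{-d}$. The constants depend only on $d$ and on $\|L\|,\|L^{-1}\|$, hence only on $d$ and $\mu$, and not on $s$.
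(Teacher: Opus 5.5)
Your proof is correct, and it reaches the result by a somewhat different technical route than the paper, though the key ingredients are shared. Both arguments rest on the same three facts:
\begin{itemize}
\item the Gaussian-orthonormal basis $z^\alpha/\sqrt{\alpha!}$;
\item the $\mu$-orthonormal product basis $p_\alpha$ built from one-variable Gram--Schmidt;
\item the observation that unused coordinates contribute a factor exactly $1$, so every relevant matrix entry is bounded independently of $s$, while the dimension $\binom{s+d}{d}\le(1+s)^d$ supplies the growth.
\end{itemize}

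The difference is which matrix gets bounded. The paper works with the quadratic forms directly. In the basis $z^\alpha/\sqrt{\alpha!}$, the $\mu^{\otimes s}$-Gram matrix has entries $\prod_j\mathbb{E}[\xi^{\alpha_j}\bar\xi^{\beta_j}]/\sqrt{\alpha!\beta!}$, at most $2d$ of whose factors are nontrivial. The crude bound $\lambda_{\max}\le(\text{size})\cdot\max|\text{entry}|$ then gives $C_{d,\mu}\binom{s+d}{d}$. The lower bound is the same computation with the Gaussian Gram matrix of the $p_\alpha$.

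You instead bound the linear change-of-basis map $T_{\beta\alpha}=\prod_i L_{\alpha_i\beta_i}$. Its norm enters squared, so the same crude bound would only give $(1+s)^{2d}$. Your observation that $T$ is supported on $\beta\le\alpha$, with at most $2^d$ nonzero entries per column, combined with Schur's test, is exactly what recovers the exponent $d$, and you carry this out correctly. The same holds for $L^{-1}$, using $(L^{-1})_{00}=1$.

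The paper's quadratic-form version is shorter and needs no sparsity. Its upper bound also uses only $\mathbb{E}|\mu|^{2d}<\infty$, not positive definiteness of $M^\mu_{(d)}$. Your version instead makes the Cholesky structure $LL^*=M^\mu_{(d)}$ explicit, so the constants are visibly controlled by $\|M^\mu_{(d)}\|$ and $\|(M^\mu_{(d)})^{-1}\|$.
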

The proof of Lemma \ref{oftheorem3.1} is deferred to Appendix \ref{AppendixA}. The positive definiteness of $M^\mu_{(d)}$ is equivalent to the fact that there does not exist a nonzero holomorphic polynomial $q\in\mathbb{C}[z],\deg q\leq d$ such that $q(\xi)=0$ a.s. Indeed, take any polynomial $q(z)=\sum_{a=0}^d c_az_a$, then 
$$
\mathbb{E}|q(\xi)|^2=c^*M^\mu_{(d)}c.
$$
So $q(\xi)=0$ a.s. if and only if $c^*M^\mu_{(d)}c=0$. When $\mu$ has a density with respect to planar Lebesgue measure, then clearly such $q$ does not exist, and thus $M^\mu_{(d)}$ is positive definite.

We have also used the following (weaker) substitute for Carbery-Wright (Theorem \ref{theoremlines102}), as the latter holds for all log-concave distributions but we do not assume log concavity here:
\begin{Proposition}\label{proposition6.2}
  Consider a random vector $\mathcal{Z}\in\mathbb{C}^n$ having a joint density $f_\mathcal{Z}$ on $\mathbb{C}^n$ satisfying $\|f_\mathcal{Z}\|_{L^\infty(\mathbb{C}^n)}\leq K$. Assume that for some $Q>2D$, $\mathbb{E}|\mathcal{Z}|^Q\leq M_Q<\infty$. Take $P:\mathbb{C}^n\to\mathbb{C}$ a holomorphic polynomial with total degree at most $D$. If $P\neq 0$, then for every $0<\epsilon<1$,
 $$
\mathbb{P}(|P(\mathcal{Z})|\leq\epsilon\|P(\mathcal{Z})\|_{L^2})\leq C_{n,D,Q,K,M_Q}\epsilon^\frac{Q}{D(Q+2n)}.
 $$\end{Proposition}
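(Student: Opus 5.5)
We combine a sublevel-set volume estimate for holomorphic polynomials with a truncation using the moment bound. Write $\|P\|:=\|P(\mathcal{Z})\|_{L^2}$. By homogeneity we may normalize $\|P\|=1$, so the goal is to bound $\mathbb{P}(|P(\mathcal{Z})|\leq\epsilon)$.

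\emph{Truncation.} Fix a radius $L\geq 1$ to be chosen later. By Markov's inequality,
$$
\mathbb{P}(|\mathcal{Z}|>L)\leq M_QL^{-Q}.
$$
On the ball $B_L\subset\mathbb{C}^n$ the density is at most $K$, so
$$
\mathbb{P}(|P(\mathcal{Z})|\leq\epsilon)\leq K\cdot\operatorname{vol}\{z\in B_L:|P(z)|\leq\epsilon\}+M_QL^{-Q}.
$$

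\emph{A coefficient lower bound on the unit ball.} To control the volume term we need a lower bound on $P$ over $B_1$ that is independent of $\mathcal{Z}$. Let $\|P\|_{B_1}:=\sup_{B_1}|P|$. All norms on the finite-dimensional space of polynomials of degree at most $D$ are equivalent, so a scaling argument gives
$$
\sup_{|z|\leq r}|P(z)|\leq C_{n,D}\,r^D\|P\|_{B_1}\quad(r\geq 1).
$$
Combining this with the normalization, we get
$$
1=\mathbb{E}|P(\mathcal{Z})|^2\leq C\,\|P\|_{B_1}^2\,\mathbb{E}(1+|\mathcal{Z}|)^{2D}\leq C\,\|P\|_{B_1}^2(1+M_Q)^{2D/Q}.
$$
Here we used $Q>2D$ and H\"older's inequality. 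Hence $\|P\|_{B_1}\geq c_{n,D,M_Q}>0$. This step uses only the moments; the $L^2$ norm is taken under the law of $\mathcal{Z}$, not under Lebesgue measure, and this mismatch is exactly what the bound bridges.

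\emph{Sublevel volume.} We apply a Remez-type inequality for polynomials of degree at most $D$ on a convex body. On the ball $B_L$, with $\sup_{B_L}|P|\geq\|P\|_{B_1}\geq c$, it gives
$$
\operatorname{vol}\{z\in B_L:|P(z)|\leq\epsilon\}\leq C_{n,D}\,\operatorname{vol}(B_L)\Bigl(\frac{\epsilon}{\sup_{B_L}|P|}\Bigr)^{1/D}\leq C\,L^{2n}(\epsilon/c)^{1/D}.
$$
This is the main obstacle. We must use a form of the inequality (Brudnyi–Ganzburg, or Carbery–Wright for the uniform measure on a convex body, which is log-concave) that is uniform in the coefficients and has exponent $1/D$ in $\epsilon$. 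If we replace the sup by $\sup_{B_1}$, a loss polynomial in $L$ appears, but this only changes the exponents.

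\emph{Optimizing.} Combining the two estimates,
$$
\mathbb{P}(|P(\mathcal{Z})|\leq\epsilon)\leq C\bigl(L^{2n}\epsilon^{1/D}+L^{-Q}\bigr).
$$
Choosing $L=\epsilon^{-1/(D(Q+2n))}$ balances the two terms and yields the bound $C\epsilon^{Q/(D(Q+2n))}$. The constant depends only on $n,D,Q,K,M_Q$, as claimed.
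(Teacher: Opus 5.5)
Your proposal is correct and follows the paper's proof essentially step by step. Both arguments normalize so that $\|P(\mathcal{Z})\|_{L^2}=1$, use the moment bound with $Q>2D$ to get $\sup_{B_1}|P|\geq c_{n,D,M_Q}$, and apply a Brudnyi--Ganzburg/Remez sublevel-volume estimate on $B_R$ (the paper applies it to the real polynomial $|P|^2$ of degree $2D$, which gives the same factor $\epsilon^{1/D}$); they then multiply by the density bound $K$, control $\{|\mathcal{Z}|>R\}$ by Markov, and take $R=\epsilon^{-1/(D(Q+2n))}$. The only difference is cosmetic: you get the lower bound on $\sup_{B_1}|P|$ from equivalence of norms on polynomials of degree at most $D$, whereas the paper first bounds the coefficient norm from below and then passes to $\sup_{B_1}|P|$ via Cauchy's integral formula on the polydisc $(\mathbb{D}_{n^{-1/2}})^n\subset B_1$.
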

 
 We also need the following estimate, which is a complex version of \cite{rudelson2015small}, Theorem 1.1.

 \begin{lemma}\label{lemma1519} Let $X=(\xi_1,\cdots,\xi_N)\in\mathbb{C}^N$ have independent coordinates and each $\xi_i$ has a Lebesgue density $f_i$ satisfying $\|f_i\|_{L^\infty(\mathbb{C})}\leq K$.
     Consider $U:\mathbb{C}^N\to\mathbb{C}^r$ an isometry onto its image: $UU^*=I_{\mathbb{C}^r}$. Then $UX$ has a planar density $P_U$ on $\mathbb{C}^r$ satisfying 
     $$
\|P_U\|_{L^\infty(\mathbb{C}^r)}\leq (eK)^r.
     $$
 \end{lemma}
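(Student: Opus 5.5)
The target is Lemma \ref{lemma1519}, the complex version of the Rudelson–Vershynin projection bound. I would prove it by identifying $\mathbb{C}^N$ with $\mathbb{R}^{2N}$ and reducing to their real Theorem 1.1 in \cite{rudelson2015small}. That theorem says the following. If $X'\in\mathbb{R}^{M}$ has independent coordinates with real densities bounded by $K'$, and $P:\mathbb{R}^M\to\mathbb{R}^d$ satisfies $PP^T=I_d$, then $PX'$ has density at most $(CK')^d$. The sharp constant there is $(\sqrt{2}K')^d$, via Ball's cube slicing. The difficulty is that our coordinates $\xi_i$ are complex, with a joint planar density on $\mathbb{C}\cong\mathbb{R}^2$. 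Their real and imaginary parts need not be independent, so the real theorem does not apply verbatim. The clean route is to redo the Rudelson–Vershynin proof with two-dimensional blocks in place of scalar coordinates.

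\textbf{Step 1: realification of $U$.} The realification $U_{\mathbb{R}}:\mathbb{R}^{2N}\to\mathbb{R}^{2r}$ of the complex isometry $U$ still satisfies $U_{\mathbb{R}}U_{\mathbb{R}}^T=I_{2r}$. It acts on each block $\mathbb{R}^2\cong\mathbb{C}$ through the real $2r\times 2$ matrix associated with the column $u_i\in\mathbb{C}^r$. So $UX=\sum_i u_i\xi_i$ is a sum of independent random vectors of the form $u_i\xi_i$.

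\textbf{Step 2: Fourier bound on the density.} For a deterministic $y\in\mathbb{C}^r$, Fourier inversion gives
\[
P_U(y)\leq (2\pi)^{-2r}\int_{\mathbb{C}^r}\prod_{i=1}^N|\widehat{f_i}(\langle u_i,\theta\rangle)|\,d\theta .
\]
Here the complex pairing is interpreted as the real inner product on $\mathbb{R}^2$. If the characteristic functions are not integrable, I would first smooth the $\xi_i$ by adding a small independent Gaussian. This keeps each planar density bounded by $K$, and I would let the smoothing go to zero at the end.

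\textbf{Step 3: Brascamp–Lieb in the geometric block form.} The key input is the rank-2 geometric Brascamp–Lieb inequality (Ball, Barthe), applied to the maps $\theta\mapsto\langle u_i,\theta\rangle\in\mathbb{C}$. The weights are $c_i=|u_i|^2$, and the decomposition of identity $\sum_i u_iu_i^*=UU^*=I_r$ holds. Realified, the block projections $P_i$ onto $\mathrm{span}_{\mathbb{R}}(u_i,\mathrm{i}u_i)$ satisfy $\sum_i |u_i|^2P_i=I_{2r}$, with $\sum_i|u_i|^2=r$. Brascamp–Lieb then gives
\[
P_U(y)\lesssim \prod_i\Bigl(\int_{\mathbb{C}}|\widehat{f_i}|^{1/|u_i|^2}\Bigr)^{|u_i|^2}.
\]
The remaining task is a one-block estimate: for $p=1/|u_i|^2\geq1$, bound $\|\widehat{f}\|_{L^p(\mathbb{C})}$ using $\|f\|_\infty\leq K$ and $\|f\|_1=1$. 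For $p\geq2$, Hausdorff–Young and interpolation give the rough bound $\|\widehat f\|_{p}^{p}\lesssim (CK)^{p-1}$ up to a factor $p^{-1}$. Taking the product and using $\sum c_i=r$ produces $(CK)^r$.

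\textbf{Main obstacle.} I expect the difficulty to be the regime $p\in[1,2)$, i.e. large $|u_i|$, where the Fourier $L^p$ bound fails. Rudelson–Vershynin handle this by splitting the coordinates into those with $|u_i|$ small and those with $|u_i|$ large. For the large ones they use a direct argument: there are at most about $2r$ such indices, and one conditions on the others and uses the marginal density of the big coordinates. I would mimic this block by block. Obtaining the exact constant $e$ requires tracking the optimal extremal constants in the Fourier estimate, as in their paper, replacing $\sqrt{2}$ by the complex planar analogue. If this becomes too delicate, I would settle for $(CK)^r$ with a universal $C$, which is all that is used afterwards in the paper.
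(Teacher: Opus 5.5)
Your Fourier-side route (in the style of Ball and Rudelson--Vershynin) differs from the paper's. As written, two steps do not go through. Both fail for the same reason: there are $N$ coordinates, so every per-coordinate loss must have the form $C^{c_i}$ with $\sum_i c_i=r$.

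\textbf{The one-block estimate.} Geometric Brascamp--Lieb needs unit directions $B_i\theta=\langle u_i,\theta\rangle/|u_i|$. With $c_i=|u_i|^2=1/p_i$, the rescaling $\int_{\mathbb{C}}|\widehat{f_i}(|u_i|z)|^{p_i}\,dz=c_i^{-1}\|\widehat{f_i}\|_{p_i}^{p_i}$ leaves a factor $\prod_i c_i^{-c_i}$ hidden in your $\lesssim$. For a partial DFT matrix, where $c_i=r/N$, this factor equals $(N/r)^r$. To absorb it you need $\int_{\mathbb{C}}|\widehat f|^p\le CK/p$ with $C$ independent of $p$.
\begin{itemize}
\item Hausdorff--Young with constant $1$ plus interpolation gives only $\int|\widehat f|^p\le\|f\|_{p'}^p\le K$, with no decay in $p$.
\item Your stated form $(CK)^{p-1}p^{-1}$ has the wrong power of $K$. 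It would cost $(CK)^{1-c_i}$ per coordinate, i.e.\ $(CK)^{N-r}$ in total.
\end{itemize}
So ``settling for a universal $C$'' is not a fallback here. What does work is the sharp Babenko--Beckner constant on $\mathbb{R}^2$ (normalization $e^{-2\pi i x\cdot\xi}$): $\|\widehat f\|_p^p\le \frac{(p')^{p-1}}{p}\|f\|_{p'}^p\le\frac{(p')^{p-1}}{p}K$. This makes the $i$-th factor at most $(1-c_i)^{-(1-c_i)}K^{c_i}$. Then $(1-c)\log\frac{1}{1-c}\le c$ gives $(eK)^r$, provided all $c_i\le\frac12$.

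\textbf{The regime $|u_i|^2>\frac12$.} Conditioning on the small coordinates and using the law of $\sum_{i\in L}u_i\xi_i$ fails as stated. For $|L|<r$ this vector has no density on $\mathbb{C}^r$, and for $|L|\ge r$ you have no bound on it. Instead, peel off one coordinate at a time. Let $Q$ be the projection onto $u_i^\perp$.
\begin{itemize}
\item The $u_i^\perp$-component of $UX$ is $QU_{-i}X_{-i}$. Since $QU_{-i}(QU_{-i})^*=Q(I-u_iu_i^*)Q=Q$, this is again a co-isometric image, and column norms do not increase.
\item Conditionally on $X_{-i}$, the $\hat u_i$-component is $|u_i|\xi_i$ plus a constant, so its density is at most $K/|u_i|^2<2K$.
\end{itemize}
Induction on $r$ then gives $(2K)^k(eK)^{r-k}\le(eK)^r$.

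\textbf{Comparison with the paper.} The paper avoids all of this by staying in physical space on the fibre $U^*y+\ker U$. It applies geometric Brascamp--Lieb on $W=\ker U$ with the complementary weights $c_i=1-|u_i|^2$, so $\sum_i c_i=N-r$. This comes after the pointwise bound $F_i\le K^{1-c_i}F_i^{c_i}$, and it concludes with $K^r\prod_i c_i^{-c_i}\le(eK)^r$. There your ``hard'' coordinates have $c_i\approx0$ and cost nothing. No Fourier transform, smoothing, or case split is needed. Your repaired argument is essentially its Fourier dual: the same product $\prod_i(1-|u_i|^2)^{-(1-|u_i|^2)}$ appears. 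However, it only covers $|u_i|^2\le\frac12$ directly and relies on Beckner's sharp inequality.
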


 \begin{corollary}\label{corollary6.8}In the setting of Lemma \ref{lemma1519}, let $A:\mathbb{C}^N\to\mathbb{C}^d$ be a surjective complex linear map such that $AA^*\geq\sigma I_{\mathbb{C}^d}$. Then $AX$ has a planar density on $\mathbb{C}^d$ bounded by $(\frac{eK}{\sigma})^d$. Consequently, for any $y\in\mathbb{C}^d$ and $r>0$,
 $$
\mathbb{P}\{\|AX-y\|_2\leq r\}\leq (\frac{CKr^2}{\sigma})^d.
 $$ 
 \end{corollary}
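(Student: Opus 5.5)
The plan is to reduce Corollary \ref{corollary6.8} to Lemma \ref{lemma1519} through a polar-type factorization of $A$.

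\textbf{Factorization.} Since $A$ is surjective with $AA^*\geq\sigma I_{\mathbb{C}^d}$, the matrix $\Sigma:=AA^*$ is positive definite. Set
$$
U:=\Sigma^{-1/2}A:\mathbb{C}^N\to\mathbb{C}^d .
$$
Then $UU^*=\Sigma^{-1/2}AA^*\Sigma^{-1/2}=I_{\mathbb{C}^d}$, so $A=\Sigma^{1/2}U$ with $U$ an isometry onto its image in the sense of Lemma \ref{lemma1519}. That lemma gives that $UX$ has a planar density $P_U$ on $\mathbb{C}^d$ with $\|P_U\|_{L^\infty}\leq (eK)^d$.

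\textbf{Change of variables.} Next I push this density forward under the invertible linear map $\Sigma^{1/2}$. Regarded as a real linear map on $\mathbb{C}^d\cong\mathbb{R}^{2d}$, a complex linear map $L$ has real Jacobian $|\det_{\mathbb{C}}L|^2$. Hence $AX=\Sigma^{1/2}(UX)$ has density
$$
y\mapsto P_U(\Sigma^{-1/2}y)\,\det(\Sigma)^{-1}.
$$
Since $\Sigma\geq\sigma I$, we have $\det\Sigma\geq\sigma^d$. The density of $AX$ is therefore bounded by $(eK)^d\sigma^{-d}$. This is at least as strong as the claimed $(eK/\sigma)^d$, whatever normalization of $\sigma$ is intended.

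\textbf{Small-ball bound.} Finally I integrate this density bound over the complex ball of radius $r$ about $y$. That ball is a real $2d$-dimensional Euclidean ball, of volume $\pi^d r^{2d}/d!\leq (\pi r^2)^d$. This gives
$$
\mathbb{P}\{\|AX-y\|_2\leq r\}\leq \Bigl(\frac{e\pi K r^2}{\sigma}\Bigr)^d ,
$$
which is the claimed bound with $C=e\pi$.

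All of the substance sits in Lemma \ref{lemma1519}, which is assumed here. The only points needing care are that the real Jacobian of a complex linear map is $|\det|^2$, which produces exactly $\sigma^{-d}$ rather than $\sigma^{-d/2}$ or $\sigma^{-2d}$, and that the factorization keeps the isometry condition $UU^*=I$.
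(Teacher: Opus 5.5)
Your proposal is correct and follows the paper's proof essentially verbatim. Both write $A=(AA^*)^{1/2}U$ with $U=(AA^*)^{-1/2}A$ satisfying $UU^*=I$, apply Lemma~\ref{lemma1519}, change variables using that the real Jacobian of $(AA^*)^{1/2}$ is $\det(AA^*)\geq\sigma^d$, and integrate the resulting density bound over a ball of volume at most $(Cr^2)^d$.
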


 \begin{fact}\label{facts1518}
     Let $X=(\xi_1,\cdots,\xi_N)\in\mathbb{C}^N$ have independent mean zero entries with 
     $$
\mathbb{E}[|\xi_i|^2]=1,\quad \mathbb{E}[|\xi_i|^Q]\leq M_Q
     $$ for some $Q\geq 2$. Consider an orthogonal projection $P$ on $\mathbb{C}^N$ of rank $k$, then the projected vector $\mathcal{Z}:=PX$ has $Q$-moments bounded by a constant depending on $k,Q,M_Q$ but not $N$. 
 \end{fact}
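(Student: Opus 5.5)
We need to prove Fact \ref{facts1518}: for a rank-$k$ orthogonal projection $P$ on $\mathbb{C}^N$ and independent mean-zero $\xi_i$ with $\mathbb{E}|\xi_i|^2=1$ and $\mathbb{E}|\xi_i|^Q\le M_Q$, the vector $PX$ has $\mathbb{E}\|PX\|^Q$ bounded independently of $N$.

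Choose an orthonormal basis $u_1,\dots,u_k$ of $\operatorname{Ran}P$. Then
$$\|PX\|_2^2=\sum_{j=1}^k|\langle u_j,X\rangle|^2,$$
so by the triangle inequality in $L^{Q/2}$ (for $Q\ge 2$),
$$\big(\mathbb{E}\|PX\|^Q\big)^{2/Q}\le \sum_{j=1}^k\big(\mathbb{E}|\langle u_j,X\rangle|^Q\big)^{2/Q}.$$
It therefore suffices to bound $\mathbb{E}|\langle u,X\rangle|^Q$ uniformly over unit vectors $u\in\mathbb{C}^N$. The result then follows with constant $k^{Q/2}\cdot C(Q,M_Q)$.

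For a unit vector $u$, $S=\sum_i \bar u_i\xi_i$ is a sum of independent mean-zero terms. We apply Rosenthal's inequality, separately to real and imaginary parts or directly in the complex form:
$$\mathbb{E}|S|^Q\le C_Q\Big[\Big(\sum_i|u_i|^2\mathbb{E}|\xi_i|^2\Big)^{Q/2}+\sum_i|u_i|^Q\mathbb{E}|\xi_i|^Q\Big].$$
The first term equals $1$. For the second, $|u_i|\le 1$ and $Q\ge 2$ give $\sum_i|u_i|^Q\le\sum_i|u_i|^2=1$, so it is at most $M_Q$. Hence $\mathbb{E}|S|^Q\le C_Q(1+M_Q)$, with no dependence on $N$.

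Alternatively, one can use symmetrization together with Khintchine's inequality. This gives $\mathbb{E}|S|^Q\le C_Q\,\mathbb{E}\big(\sum_i|u_i|^2|\xi_i|^2\big)^{Q/2}$, and Jensen's inequality applied with the probability weights $|u_i|^2$ bounds the right side by $C_Q\sum_i|u_i|^2\mathbb{E}|\xi_i|^Q\le C_QM_Q$.

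There is no real obstacle. The only point requiring care is that the bound must be uniform in $N$. This uniformity comes from the fact that $u$ is a unit vector (used in the Jensen or $\ell^Q\le\ell^2$ step) and from the finite rank $k$, which enters only through the count of basis vectors.
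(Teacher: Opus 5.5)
Your argument is correct, but it is organized differently from the paper's. The paper does not pass to coordinates. It writes $PX=\sum_{i=1}^N\xi_iPe_i$ as a sum of independent mean-zero vectors in $\mathbb{C}^N$ and applies Rosenthal's inequality directly to this vector-valued sum. It then uses $\sum_i\|Pe_i\|_2^2=\operatorname{tr}P=k$ and $\|Pe_i\|_2\le 1$ (so $\sum_i\|Pe_i\|_2^Q\le k$) to get $\mathbb{E}\|PX\|_2^Q\le C_Q(k^{Q/2}+M_Qk)$.

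You instead split $\|PX\|_2^2$ along an orthonormal basis of $\operatorname{Ran}P$ and use Minkowski in $L^{Q/2}$ to reduce to $k$ scalar sums $\langle u_j,X\rangle$. After that you need only the scalar Rosenthal inequality, or symmetrization, Khintchine and Jensen. The facts $\sum_i|u_i|^2=1$ and $|u_i|\le 1$ play the role of the paper's trace identity.

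Your route uses only classical one-dimensional tools. The paper's one-line step implicitly relies on the Hilbert-space-valued form of Rosenthal's inequality, which is true but is not the scalar statement it cites. The cost is a worse dependence on $k$: you get $k^{Q/2}C_Q(1+M_Q)$ instead of $C_Q(k^{Q/2}+M_Qk)$. This does not matter here, since $k\le m$ is fixed and only independence from $N$ is needed.
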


Proposition \ref{proposition6.2}, Lemma \ref{lemma1519}, Corollary \ref{corollary6.8} and Fact \ref{facts1518} are proven in Appendix \ref{AppendixA}.
 
\subsection{Proof of Brown measure convergence: non-Gaussian case}In this final section we generalize the argument in Section \ref{section5555} to the non-Gaussian case, thus completing the proof of Theorem \ref{nongaussiantheorem}. We again split the proof into Steps \ref{step11}, \ref{step22} and \ref{step33}.

For Step \ref{step11}, since the $X_i^N$ have i.i.d. entries of law $\frac{1}{\sqrt{N}}\xi$ where $\xi$ has mean 0, variance 1 and finite moments of all orders, the family $(X_1^N,\cdots,X_n^N)$ converges in $*$-moments in probability to the free circular family $(c_1,\cdots,c_n)$. This can be checked via the standard trace moment expansion since the only leading index configurations are given by the tree pairings which match each entry to its conjugate, and the contributions involving $\mathbb{E}\xi^2$ have a lower order. This implies, for any $k\geq 1$,
$$
\operatorname{tr}_N[(zI_N-P_N)(zI_N-P_N)^*]^k\to\tau(|z-\mathfrak{p}(c_1,\cdots,c_n)|^{2k})
$$
and thus by moment tightness, $\nu_N^z$ converges weakly in probability towards a measure $\nu^z=\operatorname{Law}(|z-\mathfrak{p}(c_1,\cdots,c_n)|^2)$.

For the proof of Step \ref{step33}, we only use (a) uniform $L^2$ integrability \eqref{ell2integrability}; (b) pointwise convergence by Step \ref{step22}; and (c) boundedness of $\|P^N\|$ which holds on $\Omega_{\text{high-prob}}$. Thus all the steps follow once Step \ref{step22} is proved.

For Step \ref{step22}, we import estimate \eqref{bythegaussianmodel} in the Gaussian case from \cite{cook2022spectrum}, Lemma 8.1, and the remaining arguments do not depend on the entry law. Thus, the only missing component is a non-Gaussian version of \eqref{bythegaussianmodel}, which is stated as the following Lemma \ref{lemmaline1522}. Having proven this, the Brown measure convergence in Theorem \ref{nongaussiantheorem} is fully proven.

\begin{lemma}\label{lemmaline1522}
Let $\deg\mathfrak{p}\geq 1$. For the non-Gaussian model, for each $z\in\mathbb{C}$ we can find positive constants $c_1',C',c_2'\in(0,1),c_3'$ depending on $\mathfrak{p},\xi,z$ such that for any $x\in[N^{-c_1'},c_3']$,
  \begin{equation}
      \mathbb{E}[\nu_N^z([0,x])]\leq 2C'x^{1-c_2'}.
  \end{equation}
\end{lemma}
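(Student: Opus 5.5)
Lemma \ref{lemmaline1522} asks for a small-ball estimate on the expected singular value counting measure of $zI_N-P^N$ in the non-Gaussian model. The plan is to rerun the Gaussian argument of \cite{cook2022spectrum}, Lemma 8.1, through the linearization, with one change: Gaussian tools are replaced by a strong-convergence/universality input from \cite{brailovskaya2024universality}, which controls the resolvent of the Hermitized linearization down to mesoscopic scales $\eta\geq N^{-c}$.

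\textbf{Step 1: reduce to a Stieltjes transform bound.} By the elementary bound used in \eqref{bythegaussianmodel}, namely
\[
\nu_N^z([0,x])\leq 2x\,|\Im \tfrac{1}{N}\operatorname{Tr}((ix-H)^{-1})|,
\]
it suffices to prove $|\Im g_N^z(i\eta)|\leq C'\eta^{-c_2'}$ for $\eta\in[N^{-c_1'},c_3']$. Here $H=(zI-P^N)(zI-P^N)^*$, and by the usual $2\times 2$ Hermitization we may equivalently work with the Hermitian block matrix built from $zI-P^N$.

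\textbf{Step 2: linearize.} Using Lemma \ref{lemma2.222}, I would write the Hermitization as a corner of a self-adjoint linear pencil
\[
\mathbf{L}=A_0\otimes I_N+\sum_\ell (A_\ell\otimes X_\ell+A_\ell^*\otimes X_\ell^*),
\]
with fixed coefficient matrices, and express the Stieltjes transform through a fixed finite block of $(\mathbf{L}-\Lambda_\eta\otimes I)^{-1}$. The same is done for the free circular model, where the resolvent of $\mathbf{L}_{\mathrm{free}}$ gives the limit $g^z(i\eta)$.

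\textbf{Step 3: compare to the free model.} The universality theorem of \cite{brailovskaya2024universality} applies to matrices $\sum Z_i\otimes A_i$ with independent entries and bounded moments, which covers our entries since all moments are finite. It yields
\[
|\mathbb{E}\operatorname{tr}(\mathbf{L}-\Lambda)^{-1}-\tau(\mathbf{L}_{\mathrm{free}}-\Lambda)^{-1}|\lesssim \mathrm{poly}(\eta^{-1})\,N^{-c}.
\]
Applying this first with Gaussian entries and then with $\xi$-entries, or directly against the free model, gives
\[
|g_N^z(i\eta)-g^z(i\eta)|\leq N^{-c}\eta^{-C}.
\]
For $\eta\geq N^{-c_1'}$ with $c_1'$ small, the right-hand side is at most $\eta^{-c_2'}$. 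It then remains to bound $|\Im g^z(i\eta)|\lesssim\eta^{-c_2'}$ for the limiting measure $\nu^z$. This amounts to a Hölder-type regularity $\nu^z([0,x])\lesssim x^{1-c_2'}$ near $0$, and I would import it from the Gaussian statement \eqref{thisisfromref3} by letting $N\to\infty$ at fixed $\eta$.

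The main obstacle is Step 3: checking the hypotheses and the precise $\eta$-dependence of the universality bound for non-Hermitian pencils with complex non-Gaussian entries. If the available bound only controls polynomial test functions or spectra rather than resolvents, I would fall back to a Lindeberg swap, replacing entries one at a time in $\operatorname{tr}(\mathbf{L}-\Lambda)^{-1}$. Matching the first two moments, using $\mathbb{E}\xi^2$ only at lower order, each swap costs $O(N^{-3/2}\eta^{-4})$ via third-order Taylor terms, which sums to $O(N^{1/2}\eta^{-4})$ over the $N^2$ entries. That total is not small, so this route would require cumulant expansions with entrywise resolvent control at scale $\eta\geq N^{-c}$. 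I would attempt the universality citation first.
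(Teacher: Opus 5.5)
Your backbone matches the paper's: reduce to a bound on $\Im g_N^z(i\eta)$, linearize, and transfer Gaussian information to the $\xi$-model through the resolvent universality theorem of \cite{brailovskaya2024universality} at scales $\eta\geq N^{-c}$. The step that fails as written is Steps 2 and 3 taken together.

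For $g_N^z(i\eta)$ to be \emph{exactly} a corner of $(\mathbf{L}-\Lambda_\eta\otimes I)^{-1}$ via the Schur complement of Lemma \ref{lemma2.222}, $\Lambda_\eta$ must equal $i\eta$ on the root coordinates. It must also have zero imaginary part on the auxiliary coordinates, since any $i\eta$ there changes the Schur complement. But the comparison estimates you cite control resolvents only through powers of $\|(\Im \Lambda)^{-1}\|$, and that is infinite here. This applies both to the resolvent comparison in \cite{brailovskaya2024universality} and to the Gaussian-to-free comparisons you would pair it with. The pencil $\Lambda_\eta\otimes I-\mathbf{L}$ is in fact invertible with norm polynomial in $\eta^{-1}$ and $\|X_\ell\|$, but that is a structural property those theorems do not see. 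You would need an extra regularization step that the proposal does not contain. For example, add $i\eta^{K}$ on the auxiliary block, apply the theorem there, and control the difference by a resolvent identity on $\Omega_{\text{high-prob}}$.

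The paper avoids the issue entirely. It only uses the scalar resolvent $(wI-\mathcal{H}^z_{\xi,N})^{-1}$, $w=i\eta$, of the Hermitized linearization, to which \cite[Theorem 2.10]{brailovskaya2024universality} applies verbatim with error $N^{-c_6}(\Im w)^{-4}$. It connects $\mathcal{L}_z$ to $P^N-z$ not by an exact corner identity but by the factorization \eqref{lines15470} with $\|E^{\pm1}\|+\|F^{\pm1}\|\leq C_{\mathfrak{p},z}$. That factorization gives the two-sided small-singular-value counting comparisons \eqref{lines15590}--\eqref{whatisnewline1701}, which cost only constant factors in $t$ and hence only worse exponents.

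Beyond this, there are smaller differences and omissions.
\begin{itemize}
\item The paper compares with the finite-$N$ Gaussian model and feeds in \eqref{bythegaussianmodel} directly, transferred to $\mathcal{H}^z_{G,N}$ by \eqref{whatisnewline1701} and a dyadic bound. You instead pass through the free model. That detour is legitimate: letting $N\to\infty$ at fixed $\eta$ in \eqref{thisisfromref3} does give $|\Im g^z(i\eta)|\leq C\eta^{-c_2}$. But it costs an extra Gaussian-to-free comparison the paper never needs.
\item The universality theorem is stated in terms of an almost sure bound on $\max_i\|Z_i\|$, not in terms of moments. This is why the paper truncates to $|\xi|\leq N^{0.4}$; your plan leaves this implicit.
\item When $\mathbb{E}\xi^2=\rho\neq 0$, the covariance-matched Gaussian (or free) model has pseudovariance $\rho$, not the circular one. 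The paper handles this with Lemma \ref{lemmaline1725}, at cost $O(|\rho|N^{-1}(\Im w)^{-3})$; your plan does not address it outside the fallback.
\end{itemize}

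Finally, your dismissal of the Lindeberg fallback rests on a miscount. For the normalized trace, the third derivative in a single entry is $O(N^{-1}\eta^{-4})$, because each perturbation $\mathcal{B}_\ell\otimes E_{ab}$ has bounded rank and the trace carries a factor $1/N$. So each swap costs $O(N^{-5/2}\eta^{-4})$, and the total over all swaps is $O(N^{-1/2}\eta^{-4})$, which is small for $\eta\geq N^{-c}$. With a scalar spectral parameter and a Gaussian comparison model of matching pseudovariance, this elementary comparison step would in fact work. It would need only finite third moments and no truncation.
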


\begin{proof}
  Since the entry law $\xi$ has finite moments of all orders, by a standard truncation theorem we can find a random variable $\xi_N$ having mean 0, variance 1 and bounded by $N^{0.4}$ a.s., such that if we denote by $Y_i^N$ an $N\times N$ matrix with i.i.d. law $\frac{1}{\sqrt{N}}\xi_N$, then $\|X_i^N-Y_i^N\|\leq N^{-100}$ with probability at least $1-CN^{-100}$. The truncation error can be absorbed in the final bound and we assume $|\xi|\leq N^{0.4}$ a.s. in the proof.

  Let $$A_{\xi,N}^z:=P^N-z,\quad \mathcal{L}^{\xi,N}_z=\mathcal{L}_z(X_1^\xi,\cdots,X_n^\xi), $$
where we use the shorthand $\xi=G$ to denote the complex Gaussian case and otherwise the superscript $\xi$ is the entry law of $X_i$ in Theorem \ref{nongaussiantheorem}. Here $\mathcal{L}^{\xi,N}_z$ is a shorthand for the linearization matrix constructed in \eqref{equation325}. We take the Hermitization
$$
\mathcal{H}_{\xi,N}^z=\begin{bmatrix}
    0&\mathcal{L}^{\xi,N}_z\\(\mathcal{L}^{\xi,N}_z)^*&0
\end{bmatrix}.
$$

On $\Omega_{\text{high-prob}}$, by Schur complement elimination as executed in the proof of Lemma \ref{lemma2.222}, we can find $E(X),F(X)$ such that
\begin{equation}\label{lines15470}
E(X)\mathcal{L}_z^{\xi,N}(X)F(X)=\begin{bmatrix}
    A_{\xi,N}^z&0\\0&I_{(m-1)N}
\end{bmatrix}
,\end{equation}
and that on the norm bounded event $\Omega_{\text{high-prob}}$ we have
\begin{equation}\label{lines1553}
\|E^{\pm 1}\|+\|F^{\pm 1}\|\leq C_{\mathfrak{p},z}
\end{equation}for a constant depending on $\mathfrak{p}$ and $z$. Indeed, the proof of Lemma \ref{lemma2.222} shows that $E(X)$ and $F(X)$ consist of $B_q(X),C_q(X)$ and $D_q(X)^{-1}$ in \eqref{schurcomplements}, and since $D_q$ is upper triangular, the norm of these three matrices are all polynomially bounded in the norm of $(X_i^\xi)_{i=1}^n$. This verifies \eqref{lines1553}.

Denote by $s_i(\cdot)$ the singular value of the matrix, then combining \eqref{lines15470} and \eqref{lines1553}, we deduce that we can find $\mathfrak{p},z$-dependent constants $C_1,C_2$ such that for $t$ sufficiently small, 
\begin{equation}
\label{lines15590}
\operatorname{Card}\{i:s_i(A_{\xi,N}^z)\leq t\}\leq \operatorname{Card}\{i:s_i(\mathcal{L}_z^{\xi,N})\leq C_1t\},  
\end{equation}
\begin{equation}\label{whatisnewline1701}
\operatorname{Card}\{i:s_i(\mathcal{L}_z^{\xi,N})\leq t\}  \leq \operatorname{Card}\{i:s_i(A_{\xi,N}^z)\leq C_2t\}.  
\end{equation}
For the Gaussian model, by \eqref{bythegaussianmodel} (which follows from \cite{cook2022spectrum}, Lemma 8.1), for any $x\in[N^{-c_1},1]$ we have 
$$
\mathbb{E}_G[\nu_N^z([0,x])]\leq Cx^{1-c_2},\text{ which can be written as }\mathbb{E}_{\nu_{A_{G,N}^z(A_{G,N}^z)^*}}([0,x])\leq Cx^{1-c_2}.
$$We apply this in conjunction with \eqref{whatisnewline1701} and get 
$$
\mathbb{E}_{\mu_{\mathcal{H}_{G,N}^z}}([-t,t])\leq C_3t^{2(1-c_2)}.
$$ This is an estimate for the empirical spectral measure of $\mathcal{H}_{G,N}^z$. Denote by $m_{\xi,N}^z(w):=\mathbb{E}\operatorname{tr}_{2mN}((wI-\mathcal{H}_{\xi,N}^z)^{-1})$ as the Stieltjes transform, and similarly $m_{G,N}^z$ for the Gaussian model. We can then use an elementary dyadic estimate and get the following Stieltjes transform estimate:
\begin{equation}\label{transferral}
|\Im m_{G,N}^z(i\eta)|=\mathbb{E}\int_\mathbb{R}\frac{\eta}{\lambda^2+\eta^2}
d\mu_{\mathcal{H}_{G,N}^z}(\lambda)\leq C_4\eta^{-q},\quad N^{-c_4}\leq \eta\leq 1,\end{equation} for some $q<1$ and some $c_4\in(0,1)$.

Now we compare the Stieltjes transform of $\mathcal{H}_{G,N}^z$ and $\mathcal{H}_{\xi,N}^z$ via \cite[Theorem 2.10]{brailovskaya2024universality}. \footnote{\cite{brailovskaya2024universality}, Theorem 2.10 considers a general random matrix of the form $X=\sum_i Z_i$ where the $Z_i$ are mutually independent, and associates to $X$ a Gaussian random matrix $G$ such that $\mathbb{E}[X]=\mathbb{E}[G],\mathbb{E}[XX^*]=\mathbb{E}[GG^*]$. The comparison of resolvents of $X$ and $G$ in Theorem 2.10 involves two parameters: one is $\mathbb{E}[(X-\mathbb{E}X)^2]$ which is $O(1)$ in our case, and the other is $\max_i\|Z_i\|$ which is at most $N^{-0.1}$ by our truncation.} Temporarily assume that $\mathbb{E}[\xi^2]=0$, then $\mathcal{H}_{G,N}^z$ and  $\mathcal{H}_{\xi,N}^z$ have the same mean and variance profile in the sense of \cite{brailovskaya2024universality}, and by Theorem 2.10 therein, we have
$$
\|\mathbb{E}[(wI-\mathcal{H}_{\xi,N}^z)^{-1}]-\mathbb{E}[(wI-\mathcal{H}_{G,N}^z)^{-1}]\|\lesssim \frac{C_5N^{-c_6}}{(\Im w)^4}
$$for some $C_5,c_6>0$, where we use our truncation step so that each entry of $Y_i^N$ is a.s. bounded by $N^{-0.1}$. We then take the trace and obtain the Stieltjes transform comparison. When $\mathbb{E}[\xi^2]=\rho\neq 0$, we denote by $\mathcal{H}_{G_\rho,N}^z$ the matrix obtained by replacing every circular Gaussian in $\mathcal{H}_{G,N}^z$ with the Gaussian having pseudovariance $\rho$. Then the same estimate holds where we replace the $\mathcal{H}_{G,N}^z$ by  $\mathcal{H}_{G_\rho,N}^z$ because the latter has the same variance profile as $\mathcal{H}_{\xi,N}^z$.
Then we use Lemma \ref{lemmaline1725} to compare the resolvent of $\mathcal{H}_{G,N}^z$ and $\mathcal{H}_{G_\rho,N}^z$ and the difference is of order $N^{-1}|\Im w|^{-3}$.  Combining all the above cases, we can find some $c_8\in(0,c_4),c_9\in[q,1)$ such that whenever $\eta\geq N^{-c_8}$,
$$
|m_{\xi,N}^z(i\eta)-m_{G,N}^z(i\eta)|\leq\eta^{-c_9},
$$and thus \eqref{transferral} implies that 
$$
|\Im m_{\xi,N}^z(i\eta)|\leq \eta^{-c_9},\quad N^{-c_8}\leq\eta\leq 1.
$$Since 
$$
\mu_{\mathcal{H}_{\xi,N}^z([-t,t])}\leq 2t|\Im m_{\xi,N}^z(it)|,
$$and using the comparison \eqref{lines15590}, we get
$$
\mathbb{E}_{\nu_{A_{\xi,N}^z(A_{\xi,N}^z)^*}}([0,x])\leq C_{10}x^{(1-c_9)/2},\quad x\in[ N^{-2c_8},c_{11}],
$$for some fixed $c_{11}>0.$ This completes the proof.\end{proof}

We have used the following perturbative estimate, see Appendix \ref{AppendixA} for the proof.

\begin{lemma}\label{lemmaline1725}
    Let $\rho\in\mathbb{C}:|\rho|\leq 1$ and let $\mathcal{H}_{\rho,N}^z$ be the Hermitized linearization using complex Gaussian entries $g_{\ell ab}^{(\rho)}$ satisfying 
    $
\mathbb{E}|g_{\ell ab}^{(\rho)}|^2=1,\quad \mathbb{E}(g_{\ell ab}^{(\rho)})^2=\rho.
    $ Then let $\mathcal{H}_{0,N}^z$ be the circular Gaussian case, i.e., $\rho=0$. Then for any $w\in\mathbb{C}:\Im w>0$,
    $$
\|\mathbb{E}(wI-\mathcal{H}_{\rho,N}^z)^{-1}-\mathbb{E}(wI-\mathcal{H}_{0,N}^z)^{-1}\|\leq \frac{C_\mathfrak{p}|\rho|}{N(\Im w)^3},
    $$and thus 
    $$
|\mathbb{E}\operatorname{tr}_{2mN}(wI-\mathcal{H}_{\rho,N}^z)^{-1}-\mathbb{E}\operatorname{tr}_{2mN}(wI-\mathcal{H}_{0,N}^z)^{-1}|\leq \frac{C_\mathfrak{p}|\rho|}{N(\Im w)^3}.
    $$
\end{lemma}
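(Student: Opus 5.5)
The plan is to interpolate between the circular Gaussian family ($\rho=0$) and the family with pseudovariance $\rho$, and to control the derivative of the expected resolvent along the path by Gaussian integration by parts. For $t\in[0,1]$, let $g^{(t)}_{\ell ab}$ be complex Gaussians with $\mathbb{E}|g^{(t)}|^2=1$ and $\mathbb{E}(g^{(t)})^2=t\rho$. This is a valid covariance because $|t\rho|\leq 1$. Concretely, take
$$g^{(t)}=\alpha_t h+\beta_t \bar h+\gamma_t h',$$
where $h,h'$ are independent circular Gaussians, $|\alpha_t|^2+|\beta_t|^2+|\gamma_t|^2=1$ and $2\alpha_t\beta_t=t\rho$. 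A cleaner option is to work directly with the real $2\times2$ covariance of $(\Re g,\Im g)$, which depends linearly on $t\rho$. Write $\mathcal{H}_t$ for the Hermitized linearization built from $X_\ell^{(t)}=N^{-1/2}(g^{(t)}_{\ell ab})$, and set $G_t(w)=(wI-\mathcal{H}_t)^{-1}$, so that $\|G_t\|\leq(\Im w)^{-1}$. Note that $\mathcal{H}_t$ is affine in the real variables $\Re g,\Im g$. Denote by $\Delta_{\ell ab}:=\partial\mathcal{H}/\partial g_{\ell ab}$ and $\bar\Delta_{\ell ab}:=\partial\mathcal{H}/\partial\bar g_{\ell ab}$ the two Wirtinger derivatives. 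Both are deterministic matrices of norm $\leq C_\mathfrak{p}N^{-1/2}$, built from $H_\ell\otimes e_ae_b^*$ and its adjoint.

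The first step is a covariance interpolation formula. For a Gaussian vector whose covariance depends smoothly on $t$, the standard identity is
$$\frac{d}{dt}\mathbb{E}F=\tfrac12\sum\partial_t\mathrm{Cov}_{ij}\,\mathbb{E}\,\partial_i\partial_jF.$$
Since only the pseudocovariance $\mathbb{E}g^2=t\rho$ varies, and each entry is independent of the others, this reads
$$\frac{d}{dt}\mathbb{E}G_t=\frac{\rho}{2}\sum_{\ell,a,b}\mathbb{E}\,\partial_{g_{\ell ab}}^2G_t+\frac{\bar\rho}{2}\sum_{\ell,a,b}\mathbb{E}\,\partial_{\bar g_{\ell ab}}^2G_t.$$
Because the dependence is affine, the second derivative is
$$\partial_g^2G=2\,G\Delta G\Delta G,$$
and similarly for $\bar g$. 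The naive bound would give $N^2$ terms, each of size $N^{-1}(\Im w)^{-3}$. That is far too large, so the structure of the sum has to be used.

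The key step, and the main obstacle, is summing $\sum_{a,b}G\,(H_\ell\otimes e_ae_b^*)\,G\,(H_\ell\otimes e_ae_b^*)\,G$ in operator norm; the Hermitization places these blocks in off-diagonal positions, but this only affects constant factors. The two insertions carry the same pair $(a,b)$ in the same orientation ($e_ae_b^*$ twice, not $e_ae_b^*$ and then $e_be_a^*$). This is exactly the "non-planar" pseudovariance contraction, and I expect it to be small. Write $G$ in blocks $G_{(\cdot,b),(\cdot,a)}$ with respect to the $\mathbb{C}^N$ factor. Then
$$\sum_{a,b}(I\otimes e_a)\,\big[H_\ell\, G_{b a}\, H_\ell\big]\,G_{b\,\cdot}.$$
Bounding this via $\sum_{a,b}\|G_{ba}\|_{HS}^2=\|G\|_{HS}^2\leq 2mN(\Im w)^{-2}$ and Cauchy–Schwarz should give an operator norm of order $N(\Im w)^{-3}$, up to constants depending on $\|H_\ell\|$. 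Equivalently, $\sum_{a,b}e_ae_b^*\,M\,e_ae_b^*=\sum_{a,b}M_{ba}\,e_ae_b^*=M^{T}$, with the transpose taken only in the $N$-factor. A transpose preserves operator norm, so the sum is bounded by $\|G\|^3$ times $C$, with no factor of $N$. Including the $N^{-1}$ from the two $\Delta$'s gives
$$\Big\|\frac{d}{dt}\mathbb{E}G_t\Big\|\leq \frac{C_\mathfrak{p}|\rho|}{N(\Im w)^3}.$$
The verification I would carry out carefully is precisely this transpose identity in the presence of the $m$-dimensional and Hermitization blocks. It follows by expanding each $\Delta$ as $\sum_\ell(\text{block})\otimes e_ae_b^*$ and applying the identity blockwise, with at most $4m^2n$ blocks.

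Integrating over $t\in[0,1]$ yields the operator norm bound in Lemma \ref{lemmaline1725}. The trace bound follows because $|\operatorname{tr}_{2mN}M|\leq\|M\|$ for the normalized trace.
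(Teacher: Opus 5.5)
Your proposal is correct and is essentially the paper's argument. The paper computes the covariance-map difference $C_\rho(M)-C_0(M)=\frac{1}{N}\sum_{\ell,a,b}[\rho B_{\ell ab}MB_{\ell ab}+\bar\rho B_{\ell ab}^*MB_{\ell ab}^*]$ and reduces it to a partial transpose via $\sum_{a,b}E_{ab}ME_{ab}=M^{T}$, exactly as you do. It then cites a black-box Gaussian resolvent comparison (Lemma 8.3 of Brailovskaya--van Handel), which your explicit interpolation in $t$ with Gaussian integration by parts effectively reproves.

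Two small corrections. The partial transpose on $M_{2m}(\mathbb{C})\otimes M_N(\mathbb{C})$ is not norm-preserving; it is only bounded by a constant depending on $m$, which your blockwise remark supplies. Your Cauchy--Schwarz aside claiming order $N(\Im w)^{-3}$ should be dropped, since after the $N^{-1}$ factor it gives no decay in $N$; the transpose identity is what actually produces the $N^{-1}$.
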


\appendix

\section{Proofs of auxiliary results}
\label{AppendixA}This appendix contains the proofs of some technical results that are left unproven in the main text.

\begin{lemma}\label{howdoweuse681} Let $Z$ be a finite-dimensional Hilbert space.
    Every left ideal $E\subset\operatorname{End}(Z)$ has the form $E=\operatorname{End}(Z)P$ for a projection $P$ on $Z$.
\end{lemma}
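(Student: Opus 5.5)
The plan is to prove Lemma \ref{howdoweuse681} by identifying a left ideal with the subspace of $Z$ on which its elements are allowed to be nonzero. Given a left ideal $E\subset\operatorname{End}(Z)$, set
$$
K:=\bigcap_{A\in E}\ker A\subset Z,\qquad V:=K^\perp,
$$
and let $P$ be the orthogonal projection onto $V$. The goal is to show $E=\operatorname{End}(Z)P$. The inclusion $E\subset\operatorname{End}(Z)P$ is immediate: every $A\in E$ vanishes on $K=\ker P$, so $A=A(P+(I-P))=AP\in\operatorname{End}(Z)P$. (If $E=0$, then $K=Z$, $P=0$, and the statement holds trivially.)

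For the reverse inclusion, I will first show that $E$ contains a single element $A_0$ with $\ker A_0=K$. Since $Z$ is finite dimensional, the intersection defining $K$ is attained by finitely many elements $A_1,\dots,A_r\in E$ with $\bigcap_j\ker A_j=K$. I will take rank-one operators $u_j v^*$ (and more generally arbitrary operators $M_j$), note that $\sum_j M_jA_j\in E$ because $E$ is a left ideal closed under addition, and choose the $M_j$ so that this sum is injective on $V$.

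A clean way to make this choice is to use the injective linear map $V\to Z^r$, $x\mapsto(A_1x,\dots,A_rx)$, whose image has dimension $\dim V\le\dim Z$. Composing with a linear map $Z^r\to Z$ that is injective on this image gives a map of the form $\sum_j M_jA_j$. Alternatively, one can take $A_0=\sum_j A_j^*A_j$, which lies in $E$ and has kernel exactly $\bigcap_j\ker A_j=K$.

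Once such an $A_0$ is available, I argue as follows. Since $A_0$ restricted to $V$ is injective, there is an operator $L$ with $LA_0=P$: define $L$ on $\operatorname{Ran}A_0$ by $L(A_0x)=Px$, which is well defined because $\ker A_0=K$, and extend it arbitrarily to the rest of $Z$. Then $P=LA_0\in E$. Hence for every $B\in\operatorname{End}(Z)$ we get $BP\in E$, that is, $\operatorname{End}(Z)P\subset E$, and the two inclusions give equality.

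The main obstacle is only the step producing a single element of $E$ with minimal kernel. I expect the $\sum_j A_j^*A_j$ trick to settle it at once, and it suffices here because in the application $E$ is a left ideal of $\operatorname{End}(Z)$, so $A_j^*A_j\in E$.
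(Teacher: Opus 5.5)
Your proof is correct and is essentially the paper's argument: the same common kernel $K$ and projection $P$ onto $K^\perp$, the same trivial inclusion $E\subset\operatorname{End}(Z)P$, and the reverse inclusion obtained by factoring through elements of $E$ whose joint kernel is exactly $K$. The only difference is cosmetic. The paper factors an arbitrary $T$ with $K\subset\ker T$ directly through $x\mapsto(e_1x,\dots,e_rx)$, whereas you first compress the generators into the single element $A_0=\sum_jA_j^*A_j\in E$ and factor just $P$ through it. Your closing caveat is also unnecessary: $A_j^*A_j\in E$ holds for every left ideal of $\operatorname{End}(Z)$, which is exactly the lemma's hypothesis.
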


\begin{proof}[\proofname\ of Lemma \ref{howdoweuse681}]
    For a left ideal $E$, define its common kernel as
    $$
K:=\cap_{A\in E}\ker A\subset Z.
    $$Thus each $A\in E$ vanishes on $K$ and we have
    $$
E\subset\{T\in\operatorname{End}(Z):K\subset\ker T\}.
    $$We next check that the reverse inclusion also holds. 

    As $Z$ is finite dimensional, we can choose finitely many elements $e_1,\cdots,e_r\in E$ such that 
    $$\cap_{\alpha=1}^r\ker e_\alpha=K.
$$Then define the following map 
$$
\mathcal{E}:Z\to Z^r, \quad\mathcal{E}(x)=(e_1x,\cdots,e_rx).
$$Then we must have $\ker\mathcal{E}=K$.

Next, we take any linear map 
$$T:Z\to Z
$$ such that 
$$
K\subset\ker T. 
$$Since $\ker \mathcal{E}=K$, the map $T$ factors through $E$, which means that there exists a linear map 
$$
L:\operatorname{Ran}\mathcal{E}\to Z
$$satisfying that 
$$
T=L\mathcal{E}.
$$Next we extend $L$ arbitrarily into some linear map 
$$
\widetilde{L}:Z^r\to Z,
$$where given in coordinates it has the form 
$$
\widetilde{L}(y_1,\cdots,y_r)=\sum_{\alpha=1}^r L_\alpha y_\alpha,
$$
for some $L_\alpha\in\operatorname{End}(Z).$ Then we must have
$$
T=\widetilde{L}\mathcal{E}=\sum_{\alpha=1}^r L_\alpha e_\alpha.
$$Since $e_\alpha\in E$ and $E$ is left ideal, we have $L_\alpha e_\alpha \in E$ and $T\in E.$

    Finally, we now choose a projection 
    $$
P:Z\to Z,
    $$with $\ker P=K$. For instance we can take the orthogonal projection onto $K^\perp$. Then we get the equality 
    $$
\operatorname{End}(Z)P=\{LP:L\in\operatorname{End}(Z)\}=\{T\in \operatorname{End}(Z):K\subset\ker T\}.
    $$
\end{proof}

\begin{proof}[\proofname\ of Lemma \ref{blocksecondidentity}]Fix a block $a$ and work on the decomposition $E=E_a\oplus E_{-a}$. For this decomposition, the Gram operator $G:=T^*T$ has a block form $$G=\begin{bmatrix}
    A&B\\B^*&C
\end{bmatrix},
    $$where each block has the following entries
    $$
A=T_a^*T_a,\quad B=T_a^*T_{-a},\quad C=T_{-a}^*T_{-a}.
    $$Then $T_{-a}$ is injective since $T$ is injective, so $C=T_{-a}^*T_{-a}$ is invertible.

The orthogonal projection onto $\operatorname{Ran}(T_{-a})^\perp=\ker(T_{-a}^*)$ has the form 
$$
P_a=I_{Z^N}-T_{-a}(T_{-a}^*T_{-a})^{-1}T_{-a}^*=I_{Z^N}-T_{-a}C^{-1}T_{-a}^*.
$$Therefore, we must have
$$
\mathfrak{Exp}_a^*\mathfrak{Exp}_a=T_a^*P_aT_a=T_a^*T_a-T_a^*T_{-a}C^{-1}T_{-a}^*T_a=A-BC^{-1}B^*,
$$and the latter is exactly the Schur complement of $C$ in $G$. Therefore, by the Schur block inverse formula, we get that the $E_a\to E_a$ diagonal block of $G^{-1}$ is precisely given by 
$$
(A-BC^{-1}B^*)^{-1}=(\mathfrak{Exp}_a^*\mathfrak{Exp}_a)^{-1}.
$$Taking the trace, we have
$$
\operatorname{Tr}(G^{-1})=\sum_{a=1}^N\operatorname{Tr}((\mathfrak{Exp}_a^*\mathfrak{Exp}_a)^{-1}).
$$Finally, since $G=T^*T$, we have
$$\operatorname{Tr}(G^{-1})=\sum_{s=1}^{N\dim Z}\sigma_s(T)^{-2}.$$ This verifies the identity.
\end{proof}

\begin{proof}[\proofname\ of Fact \ref{facts3.33}] In this proof we simply write $m=\dim Z$.

    The matrix $\mathcal{T}_{-a}$ has $mN$ rows and $m(N-1)$ columns and thus its maximal possible rank is $m(N-1)$. To prove it has full rank almost surely, we now exhibit one $m(N-1)\times m(N-1)$ minor which is not identically zero.

    Choose block rows $k\in [N]\setminus\{a\}$ and all remaining block columns $i\in [N]\setminus\{a\}$, we get the square block submatrix 
    $$
M(G)=[\delta_{ki}I_Z+\sum_{\ell=1}^n G_\ell(k,i)R_\ell]_{k,i\neq a}.
    $$
    The determinant is a polynomial in the Gaussian entries $G_\ell(k,i)$ as every entry of $M(G)$ is affine linear in all these Gaussian variables.

Evaluating at the zero matrix $G_\ell(k,i)=0$, we have 
$$
M(0)=[\delta_{ki}I_Z]_{k,i\neq a}=I_{m(N-1)}.
$$Thus $\det M(G)$ is not identically zero. Since the Gaussian entries have a continuous density, a nonzero polynomial in these entries will vanish only with probability zero. Therefore, almost surely we have 
$$\det M(G)\neq 0,$$
    and therefore we have 
    $$
\operatorname{rank}(\mathcal{T}_{-a})=m(N-1),\quad\dim W_a=m. 
    $$Via the same steps we can show that $\mathcal{T}_N$ is invertible almost surely.
\end{proof}

\begin{proof}[\proofname\ of Lemma \ref{lemma4.20}]
    The map $P_{Y\mid _C}:C\to Y$ has least singular value at least $\sqrt{1-\tau^2}$. Since $\dim C=\dim Y$, the injective map $P_{Y\mid C}$ is onto. Therefore, for any $y\in Y$, we can find some $x\in C$ satisfying that
    $$
P_Yx=y,\quad \|x\|\leq (1-\tau^2)^{-1/2}\|y\|.
    $$Then since $y-x=-P_{Y^\perp}x$ and $x\in C$, 
    $$
d(y,C)\leq\|y-x\|=\|P_{Y^\perp}x\|\leq\tau\|x\|\leq\frac{\tau}{\sqrt{1-\tau^2}}\|y\|.
    $$This completes the proof.
\end{proof}

\begin{proof}[\proofname\ of estimate \eqref{thisisfromref3}, recalled from \cite{cook2022spectrum}]
Recall that we can decompose a Ginibre matrix as the sum of two independent GUE matrices via $X_j^N=(Y_j^N+iZ_j^N)/\sqrt{2}$ for two independent GUE matrices $Y_j,Z_j$. Then we can rewrite $(z-P^N)(z-P^N)^*$ as a polynomial of independent GUE matrices, so that by \cite[Lemma 5.5.4]{anderson2010introduction}, we can find $c_1>0$ such that for any $\Im\zeta\in[N^{-c_1},1]$ and all sufficiently large $N$,
$$|g^z(\zeta)-g^z_N(\zeta)|\leq\frac{c_2}{N^2(\Im\zeta)^{c_3}}.$$
We then only need to prove that for some $q>-1,K>0$ we have 
$$
|\Im g^z(i\epsilon)\leq K\epsilon^q|
.$$ First, by \cite[Corollary 1.2]{shlyakhtenko2015freely}, we verify that $\nu^z$ has no atoms, and by \cite[Theorem 1.1]{shlyakhtenko2015freely}, $g^z$ is bounded near the real line excluding a discrete set $A$. If $A$ does not contain origin we take $q=0$. If $A$ contains the origin, the same result shows $g^z(\zeta)\sim K\zeta^q$ for some $K>0,q\in\mathbb{Q}$, while no atoms requires $q>-1$. Clearly, all these steps work for $\mathfrak{p}$ of any degree.
\end{proof}

\begin{proof}[\proofname\ of Lemma \ref{oftheorem3.1}]
    Denote by 
    $$
\mathcal{P}_{\leq d}^{(s)}=\{Q\in\mathbb{C}[z_1,\cdots,z_s]:\deg Q\leq d
\}. $$ Its dimension is at most 
$$
M_s=\binom{s+d}{d}\leq C_d(1+s)^d.
$$
For the upper bound, consider a multiindex $\alpha=(\alpha_1,\cdots,\alpha_s),|\alpha|\leq d$, we write 
$$z^\alpha=z_1^{\alpha_1}\cdots z_s^{\alpha_s},\quad \alpha!=\alpha_1!\cdots\alpha_s!.
$$
For the standard complex Gaussian, the normalized monomials $$e_\alpha(z):=\frac{z^\alpha}{\sqrt{\alpha!}}$$ form an orthonormal basis of $\mathcal{P}_{\leq d}^{(s)}$ in $L^2(\gamma^s)$, where $\gamma$ is the standard complex Gaussian. Therefore if we write any polynomial $Q$ in this base:
$$
Q=\sum_{|\alpha|\leq d}c_\alpha e_\alpha,
$$
then we have 
$$
\mathbb{E}_g|Q(g)|^2=\sum_{|\alpha|\leq d}|c_\alpha|^2.
$$
Now turn to the $\mu^s$-norm. The entries of Gram matrix under this new norm is 
$$
\mathcal{M}_{\alpha\beta}^\mu=\frac{1}{\sqrt{\alpha!\beta!}}\prod_{j=1}^s\mathbb{E}[\xi^{\alpha_j}\bar{\xi}^{\beta_j}].
$$

Since $|\alpha|+|\beta|\leq 2d$, we can find a constant $C_{d,\mu}>0$ such that 
$$ |\mathcal{M}_{\alpha\beta}^\mu|\leq C_{d,\mu}  $$
holds for any value of $s,\alpha,\beta$. Thus we have
$$
\lambda_{\text{max}}(\mathcal{M}^\mu)\leq C_{d,\mu}M_s.
$$Therefore, we have
\begin{equation}\label{justifyupperbound}
\mathbb{E}_\mu |Q(\xi)|^2=c^*\mathcal{M}^\mu c\leq C_{d,\mu}M_s\sum_\alpha |c_\alpha|^2\leq C_{d,\mu}(1+s)^d\mathbb{E}_g|Q(g)|^2
.\end{equation} This justifies the upper bound.

The proof for the lower bound is similar. Indeed, since $M^\mu_{(d)}$ is positive definite by assumption, we apply Gram-Schmidt to the one variable space $\mathbb{C}[z]_{\leq d}$ and get a family of $\mu$-orthonormal polynomials
$$
p_0(z),p_1(z),\cdots,p_d(z),\quad \deg p_k=k.
$$Then for any $|\alpha|\leq d$, $\{p_\alpha\}
$ is an orthonormal basis of $\mathcal{P}_{\leq d}^{(s)}$ in $L^2(\mu^s)$. Therefore, if we write 
$$Q=\sum_{|\alpha|\leq d}a_\alpha p_\alpha,
$$
then 
$$
\mathbb{E}_\mu|Q(\xi)|^2=\sum_{|\alpha|\leq d}|a_\alpha|^2.
$$Now we switch to Gaussian norm estimates, where we also have
$$\mathbb{E}_g[p_\alpha(g)\overline{p_\beta(g)}]=\prod_{j=1}^s\mathbb{E}_g[p_{\alpha_j}(g)\overline{p_{\beta_j}(g)}]
.$$ Again since $|\alpha|+|\beta|\leq 2d$, there are at most $2d$ nontrivial indices. As $p_0,\cdots,p_d$ are polynomials determined by $\mu$, we find $C_{d,\mu}>0$ such that $|\mathbb{E}_g[p_\alpha(g)\overline{p_\beta(g)}]|\leq C_{d,\mu}$ (we can enlarge $C_{d,\mu}$ in \eqref{justifyupperbound} so that both $C_{d,\mu}$ take the same value.) Therefore, the operator norm of the Gaussian Gram matrix under the $\{p_\alpha\}$ basis is at most $C_{d,\mu}M_s$. Therefore,
$$ \mathbb{E}_g|Q(g)|^2\leq C_{d,\mu}M_s\sum_\alpha |a_\alpha|^2=C_{d,\mu}M_s\mathbb{E}_\mu|Q(\xi)|^2.
$$
This justifies the lower bound.\end{proof}

\begin{proof}[\proofname\ of Proposition \ref{proposition6.2}]
     We re-normalize $P(\mathcal{Z})$ and denote by $q(z):=p(z)/\|P(\mathcal{Z})\|_{L^2}$. We write in coefficients 
     $$
q(z)=\sum_{|\alpha|\leq D}c_\alpha z^\alpha,
     $$ and denote by $N(n,D)=\binom{n+D}D$. We define the coefficient norm 
     $$
\|q\|_{\text{coef}}:=\left(\sum_{|\alpha|\leq D}|c_\alpha|^2\right)^\frac{1}{2}.
     $$
     For any $z\in\mathbb{C}^n$, denote by $z^\alpha=z_1^{\alpha_1}\cdots z_n^{\alpha_n}$, then 
     $$
|z^\alpha|\leq\max(1,\|z\|_2^D)\text{ when }|\alpha|\leq D.
     $$Then by Cauchy-Schwarz, 
     $$
|q(z)|\leq \sum_{|\alpha|\leq D}|c_\alpha||z^\alpha|\leq \sqrt{N(n,D)}\|q\|_{\text{coef}}\max(1,\|z\|_2^D).
     $$Taking expectation, 
     $$
1=\mathbb{E}[|q(\mathcal{Z})|^2]\leq N(n,D)\|q\|^2_{\text{coef}}\mathbb{E}\max(1,\|\mathcal{Z}\|_2^{2D}).
     $$Since $Q\geq 2D$, we can upper bound, for some $C_{n,D,Q,M_Q}$ depending on these constants and whose value may change from line to line, 
     $$
\mathbb{E}\max(1,\|\mathcal{Z}\|_2^{2D})\leq C_{n,D,Q,M_Q}
     .$$Then 
     $$
\|q\|_{\text{coef}}\geq c_{n,D,Q,M_Q}>0.
     $$
     Since $q$ is holomorphic, we can apply Cauchy's integration formula to each $c_\alpha$
$$c_\alpha=\frac{1}{(2\pi i)^n}\int_{|\zeta_1|=r}\cdots\int_{|\zeta_n|=r}\frac{q(\zeta)}{\zeta_1^{\alpha_1+1}\cdots\zeta_n^{\alpha_n+1}}
d\zeta_1\cdots d\zeta_n$$
     and deduce that, for each coefficient $\alpha$, 
     $$|c_\alpha|\leq r^{-|\alpha|}\sup_{\mathbb{D}_r^n}|q|,
     $$where $\mathbb{D}_r$ is the disc of radius $r$ in $\mathbb{C}$. In this proof let $B_r$ denote the ball of radius $r$ in $\mathbb{C}^n$ for all $r\geq 0$. Since $(\mathbb{D}_{n^{-1/2}})^n\subset B_1$, we combine with the lower bound on $\|q\|_{\operatorname{coef}}$ to deduce that we can find some $s_*>0$ depending only on $n,D,Q,M_Q$ such that for any $R\geq 1$,
     \begin{equation}\label{line1285}
\sup_{B_R}|q|\geq s_*.
     \end{equation}

We use the following Remez type sublevel set control (see \cite{brudnyui1973extremal} or \cite[Theorem A.1]{rudelson2014invertibility}.) Let $r:\mathbb{R}^m\to\mathbb{R}$ be a real polynomial of degree at most $D$, then for any Euclidean ball $B_R\subset\mathbb{R}^m$ and any $0<t\leq 1$,
$$
\operatorname{Vol}\{z\in B_R:|r(z)|\leq t\sup_{B_R}|r|
\}\leq C_{m,D}t^{1/D}\operatorname{Vol}(B_R).
$$We apply this to $r(z):=|q(z)|^2$ (which has degree $2D$) viewed as a polynomial on $\mathbb{R}^{2n}$, and use \eqref{line1285} to deduce that 
$$
\operatorname{Vol}(\{|q|\leq\epsilon\}\cap B_R)\leq C_{n,D,Q,M_Q}R^{2n}\epsilon^{1/D}.$$
Since the joint density $f_\mathcal{Z}$ of $Z$ satisfies $\|f_\mathcal{Z}\|_\infty\leq K$, the above implies that 
$$
\mathbb{P}(|q(\mathcal{Z})|\leq\epsilon,\mathcal{Z}\in B_R)\leq K\operatorname{Vol}(\{|q|\leq\epsilon\}\cap B_R)\leq C_{n,D,Q,K,M_Q}R^{2n}\epsilon^{1/D}.
$$Finally we control the tail outside: by Markov and the finite $Q$-th moment of $\mathcal{Z}$, 
$$
\mathbb{P}(\|\mathcal{Z}\|_2\geq R)\leq C_{n,Q,M_Q}R^{-Q}.
$$ Choosing $R=\epsilon^{-1/(D(Q+2n))}$, we combine the last two inequalities. This completes the proof.\end{proof}

 \begin{proof}[\proofname\ of Lemma \ref{lemma1519}]Denote by $W=\ker U$, since $UU^*=I_r$ we consider an orthogonal decomposition 
 $$
\mathbb{C}^N=W\oplus \operatorname{Im} U^*,
 $$where $U:\operatorname{Im} U^*\to\mathbb{C}^r$ is an isometry. Then for any $y\in\mathbb{C}^r$, the fiber is given as
 $$
U^{-1}\{y\}=U^*y+W.
 $$Via the coarea formula, the density of $UX$ at $y$ satisfies 
 $$
p_U(y)=\int_W\prod_{i=1}^Nf_i((U^*y+w)_i)dw.
 $$
     Define the coordinate maps $C_i:W\to\mathbb{C},\quad C_iw=w_i$. Denote by $c_i:=\|C_i\|_{\text{op}}^2,$ so that $c_i=1-\|Ue_i\|^2.$ Then the $c_i$ satisfy 
     $$
0\leq c_i\leq 1,\quad \sum_{i=1}^Nc_i=N-r.
     $$
Then for $c_i>0$ define 
$$\widetilde{C}_i=c_i^{-1/2}C_i:\quad W\to\mathbb{C},
$$then they satisfy
$$\widetilde{C}_i\widetilde{C}_i^*=I_\mathbb{C},
$$
$$\sum_{i:c_i>0}c_i\widetilde{C}_i^*\widetilde{C}_i=I_W.
$$Then we set 
$$F_i(z):=f_i(z+(U^*y)_i),$$ so that 
$0\leq F_i\leq  K$ and $\int_\mathbb{C}F_i(z)dz=1$. Since $0\leq c_i\leq 1$, we write 
$$F_i(z)\leq K^{1-c_i}F_i(z)^{c_i},
$$and thus we have 
$$
\prod_{i=1}^N F_i(C_iw)\leq K^{\sum_{i=1}^N(1-c_i)}\prod_{i:c_i>0}F_i(\sqrt{c_i}\widetilde{C}_iw)^{c_i}
$$If we denote by $g_i(z):=F_i(\sqrt{c_i}z)$, then 
$$
\int_\mathbb{C}g_i(z)dz=c_i^{-1}.
$$Then we apply geometric Brascamp-Lieb inequality (see \cite{barthe2008gaussian},\cite{bennett2008brascamp}) to get 
$$
p_U(y)\leq K^{N-\sum_ic_i}\prod_{i:c_i>0}
c_i^{-c_i}.$$
Finally, use $a\log\frac{1}{a}\leq 1-a\quad\forall 0<a<1$ and $\sum_ic_i=N-r$ to conclude.\end{proof}

 \begin{proof}[\proofname\ of Corollary \ref{corollary6.8}] Denote by $\Gamma=AA^*$ which is positive definite. Take $\widehat{U}=\Gamma^{-1/2}A$. Then $\widehat{U}\widehat{U}^*=I_{\mathbb{C}^d}$. By Lemma \ref{lemma1519}, $\widehat{U}X$ has a density $P_{\widehat{U}}$ on $\mathbb{C}^d$ with density bounded by $(eK)^d$. 

 Now $AX=\Gamma^{1/2}\widehat{U}X$, and the complex-linear map $\Gamma^{1/2}:\mathbb{C}^d\to\mathbb{C}^d$ has real Jacobian 
 $$
|\det_\mathbb{C}(\Gamma^{1/2})|^2=\det_\mathbb{C}\Gamma=\det_\mathbb{C}(AA^*),
 $$so by change of variables $AX$ has a density $p_A$ satisfying 
 $$
\|P_A\|_{L^\infty(\mathbb{C}^d)}\leq \frac{(eK)^d}{\det_\mathbb{C}(AA^*)}.
 $$Our assumption on $A$ implies $\det_\mathbb{C}(AA^*)\geq\sigma^d$, so $AX$ has density bounded by $(\frac{eK}{\sigma})^d$. Finally we use
 $$
\mathbb{P}\{\|AX-y\|_2\leq r\}\leq \|P_A\|_\infty\operatorname{Vol}_{\mathbb{C}^d}(B(0,r)),
 $$
 and $\operatorname{Vol}_{\mathbb{C}^d}(B(0,r))\leq (Cr^2)^d.$
     
 \end{proof}

 \begin{proof}[\proofname\ of Fact \ref{facts1518}] For the second moment, 
 $$
\mathbb{E}\|PX\|_2^2=\mathbb{E}\langle PX,X\rangle=\operatorname{tr}(P\mathbb{E}XX^*)=k.
 $$For higher moments, we write in coordinates as 
 $$
PX=\sum_{i=1}^N\xi_i Pe_i,
 $$which is a sum of independent mean-zero random variables. By Rosenthal \cite{rosenthal1970subspaces}, we have 
 $$
\mathbb{E}\left\|\sum_i\xi_i Pe_i\right\|_2^Q\leq C_Q\left[\left(\sum_i\mathbb{E}|\xi_i|^2\|Pe_i\|_2^2\right)^{Q/2}+\sum_i\mathbb{E}|\xi_i|^Q\|Pe_i\|_2^Q
\right].
 $$Then we use
 $
\sum_i\|Pe_i\|_2^2=k,
 $ and since $P$ is orthogonal projection, $\|Pe_i\|_2\leq 1$, so 
 $$\sum_i\|Pe_i\|_2^Q\leq\sum_i\|Pe_i\|_2^2=k,$$ and thus 
 $$
\mathbb{E}\|PX\|_2^Q\leq C_Q(k^{Q/2}+M_Qk).
 $$
 \end{proof}

\begin{proof}[\proofname\ of Lemma \ref{lemmaline1725}]
We only give a sketch. Write the centered random part for the Hermitized linearization via 
$$
Y_{\rho,N}=\frac{1}{\sqrt{N}}\sum_{\ell=1}^n\sum_{a,b=1}^N(g_{\ell ab}^{(\rho)}B_{\ell ab}+\overline{g_{\ell ab}^{(\rho)}}B_{\ell ab}^*),
$$where we denote by $B_{\ell ab}=\begin{bmatrix}
    0&H_\ell\otimes E_{ab}\\0&0
\end{bmatrix}=\mathcal{B}_\ell\otimes E_{ab}$. Here $\mathcal{B}_\ell\in M_{2m}(\mathbb{C})$ and $a,b\in[N]$.
Consider the covariance maps 
$$
C_\rho(M)=\mathbb{E}Y_{\rho,N}MY_{\rho, N},\quad C_0(M)=\mathbb{E}Y_{0,N}MY_{0,N},
$$then these two maps only differ through the coupling $\mathbb{E}(g_{\ell ab}^{(\rho)})^2=\rho,\quad \mathbb{E}(\overline{g_{\ell ab}^{(\rho)}})^2=\bar{\rho}$. Then
$$
C_\rho(M)-C_0(M)=\frac{1}{N}\sum_{\ell,a,b}[\rho B_{\ell ab}M B_{\ell ab}+\bar{\rho}B_{\ell ab}^* MB_{\ell ab}^*].
$$
Then we use the identity $\sum_{a,b=1}^N E_{ab}AE_{ab}=A^T$ for any $A\in M_N(\mathbb{C})$ and write
$$\sum_{a,b}B_{\ell ab}M B_{\ell ab}=\sum_{a,b}(\mathcal{B}_\ell\otimes E_{ab})M(\mathcal{B}_\ell\otimes E_{ab})=(\mathcal{B}_\ell\otimes I_N)M^\Gamma(\mathcal{B}_\ell\otimes I_N).
$$Here $M^\Gamma$ is the partial transpose of $M$, which means we only take the transpose of each block of size $N$ but do not change the location of the $m^2$ blocks.
Since the dimension $m$ is bounded, partial transpose has bounded operator norm, so we can find $C_\mathfrak{p}>0$ such that 
$$
\|C_\rho-C_0\|_{\infty\to\infty}\leq \frac{C_\mathfrak{p}|\rho|}
{N}.$$
The mean of $\mathcal{H}_{\rho,N}^z$
and $\mathcal{H}_{0,N}^z$ are equal. Applying the Gaussian resolvent comparison in \cite{brailovskaya2024universality}, Lemma 8.3, we get 
$$\|\mathbb{E}(wI-\mathcal{H}_{\rho,N}^z)^{-1}-\mathbb{E}(wI-\mathcal{H}_{0,N}^z)^{-1}\|\leq \frac{C_\mathfrak{p}|\rho|}{N(\Im w)^3}
.$$
This completes the proof.
\end{proof}

\section*{Acknowledgment}
The author thanks Charles Bordenave and Ping Zhong for introducing the author to this very interesting problem.

\section*{Funding}
The author is supported by a fellowship from IAS provided by the S.S. Chern Foundation for Mathematical Research Fund and the Fund for Mathematics.

\printbibliography

\end{document}